\tikzstyle{every picture}=[scale=.5,inner sep=10]
\theoremstyle{plain}% default
\newtheorem{theorem}{Theorem}[section]
\newtheorem{proposition}[theorem]{Proposition}
\newtheorem{lemma}[theorem]{Lemma}
\newtheorem{corollary}[theorem]{Corollary}
\theoremstyle{definition}
\newtheorem{definition}[theorem]{Definition}
\newtheorem{example}[theorem]{Example}
\newtheorem{remark}[theorem]{Remark}
\newtheorem{notation}[theorem]{Notation}
\DeclareMathOperator{\ev}{\mathrm{ev}}
\DeclareMathOperator{\Mstack}{\mathcal{M}}
\DeclareMathOperator{\Mbstack}{\overline{\mathcal{M}}}
\newglossaryentry{X}{
	name=$X$,
	description=the target variety
}
\newglossaryentry{tis}{
	name={$T_0,\ldots T_k$},
	description=homogeneous basis of $H^*(X)$
}
\newglossaryentry{gef}{
	name=$g^{ef}$,
	description=inverse of intersection product
}
\newglossaryentry{modulicurves}{
	name=$\Mbstack_{g,n}$,
	description=moduli space of stable curves
}
\newglossaryentry{pii}{
	name=$\pi_i$,
	description=forgetful map forgetting the $i$-th point
}
\newglossaryentry{q}{
	name=$q$,
	description=gluing map that glues two curves together
}
\newglossaryentry{delta}{
	name=$\delta$,
	description=gluing map that glues a curve to itself
}
\newglossaryentry{psi}{
	name=$\psi$,
	description=a tautological class
}
\newglossaryentry{kappa}{
	name=$\kappa$,
	description=a tautological class
}
\newglossaryentry{modulimaps}{
	name={$\Mbstack_{g,n}(X,\beta)$},
	description=moduli space of stable maps
}
\newglossaryentry{ev}{
	name=$\ev_i$,
	description=evaluation map at the $i$-th point of a stable map
}
\newglossaryentry{tildepsi}{
	name=$\tilde\psi$,
	description=cocycle on the moduli space of stable maps
}
\newglossaryentry{virfund}{
	name={$[\Mbstack_{g,n}(X,\beta)]^\text{vir}$},
	description=virtual fundamental class
}
\newglossaryentry{gwi}{
	name=$<\tau_{a_1}(\gamma_1)\cdots\tau_{a_n}(\gamma_n)>_{g,\beta}^X$,
	description=Gromov-Witten invariant
}
\newglossaryentry{symbolgwi}{
	name={$<\psi^{a_1}\gamma_1,\ldots,\psi^{a_n}\gamma_n>$},
	description=Gromov-Witten invariant using symbol notation
}
\newglossaryentry{tautring}{
	name={$R^*(\Mbstack_{g,n})$},
	description=tautological ring of the moduli space of curves
}
\newglossaryentry{gamma}{
	name={$\Gamma$},
	description=stable graph
}
\newglossaryentry{Mgamma}{
	name={$\Mbstack_\Gamma$},
	description=product of Moduli spaces associated to $\Gamma$
}
\newglossaryentry{xigamma}{
	name={$\xi_\Gamma$},
	description=glueing map associated to $\Gamma$
}
\newglossaryentry{decgraph}{
	name={$\Gamma_\theta$},
	description=decorated stable graph
}
\newglossaryentry{Msmooth}{
	name={$\mathcal{M}_{g,n}$},
	description=moduli space of smooth curves
}
\newglossaryentry{Mrt}{
	name={$\mathcal{M}^\mathrm{rt}_{g,n}$},
	description=moduli space of curves with rational tail
}
\newglossaryentry{Mct}{
	name={$\mathcal{M}^\mathrm{ct}_{g,n}$},
	description=moduli space of curves of compact type
}
\newglossaryentry{strataalgebra}{
	name={$S_{g,n}$},
	description=the strata algebra
}
\newglossaryentry{relations}{
	name={$R_{g,n}$},
	description=algebra of relations
}
\newglossaryentry{newrels}{
	name={$R_{g,n}^\mathrm{new}$},
	description=new relations
}
\newglossaryentry{pixtonrels}{
	name={$P_{g,n}$},
	description=algebra of Pixton's relations
}
\newglossaryentry{P}{
	name={$\mathbb{Q}[GW(X)]$},
	description=polynomial ring in formal Gromov-Witten invariants
}
\newglossaryentry{G}{
	name={$G_{g,\beta}^X$},
	description=linear combinations of formal Gromov-Witten invariants
}
\newglossaryentry{eta}{
	name={$\eta$},
	description=evaluation map for formal Gromov-Witten invariants
}
\newglossaryentry{Sprim}{
	name={$S_\mathrm{prim}$},
	description=the primitive part of a relation $S$
}
\newglossaryentry{sigma}{
	name={$\Sigma(S)(\gamma_1,\ldots,\gamma_n)$},
	description=the symbol map
}
\newglossaryentry{equiv}{
	name={$\sim$},
	description=equivalence relation on $G_{g,\beta}^X$
}
\newglossaryentry{Theta}{
	name={$\Theta_{j,k}(a,b)$},
	description=an element of $G_{g,\beta}^X$
}
\newglossaryentry{Phi}{
	name={$\Phi_k(\gamma_1,\gamma_2,\gamma_3)$},
	description=system of equations in $G_{g,\beta}^X$
}
\newglossaryentry{Xr}{
	name={$X_r$},
	description=the blowup of $\mathbb{P}^2$ in $r$ points
}
\newglossaryentry{NHPK}{
	name={$N^{(g)}_{d,\alpha}$,$H^{(g)}_{d,\alpha}$,$P^{(g)}_{d,\alpha}$,$K^{(g)}_{d,\alpha}$},
	user1=,
	description=Gromov-Witten invariants of $X_r$
}
\newglossaryentry{equivxr}{
	name={$\cong$},
	description=equivalence relation used for computing Gromov-Witten invariants of $X_r$
}
\author{Thomas Wennink}
\title{Reconstruction theorems for genus $2$ Gromov-Witten invariants}
\date{}
\begin{document}
\maketitle

\begin{abstract}
\noindent
We use Pixton's relations to prove a reconstruction theorem for genus~$2$ Gromov-Witten invariants in the style of Kontsevich-Manin (genus~$0$) and Getzler (genus~$1$).
We also calculate genus~$2$ (descendant) Gromov-Witten invariants of $\mathbb{P}^2$ blown up at a finite number of points in general position.
\end{abstract}

\section{Introduction}
In the early days of Gromov-Witten theory Kontsevich proved a recursive formula that solved the long-standing problem of counting rational curves in $\mathbb{P}^2$ of degree $d$ that go through $3d-1$ points in general position.

This formula was proven using the WDVV relation in the tautological ring $R^1(\Mbstack_{0,4})$.
Given such a tautological relation, we obtain a relation in the cohomology ring of the moduli space of stable maps by pulling it back along the forgetful map that remembers and stabilizes the source curve only.
The Splitting Lemma then expresses these pulled back relations in terms of Gromov-Witten invariants of $\mathbb{P}^2$.
This approach can be used to prove reconstruction theorems for more general smooth projective target varieties $X$.

The first reconstruction theorem by Kontsevich-Manin \cite{kontmaninrecon} states that when the cohomology of $X$ is generated by divisors, the genus~$0$ Gromov-Witten invariants can be computed using recursive formulas from the $3$-pointed invariants as initial values.

In \cite{getzlerg1} Getzler discovered a new tautological relation in genus~$1$ and used it to prove a similar reconstruction theorem:
For target varieties $X$ whose primitive cohomology is in $H^{\leq 2}(X)$, all genus $1$ invariants can be computed from the $1$-pointed genus~$1$ invariants and all genus~$0$ invariants.

Later Belorousski and Pandharipande \cite{belpandg2} found a new tautological relation in genus~$2$.
They used this relation to find a recursive formula for genus~$2$ invariants of $\mathbb{P}^2$.
But they were unable to prove a general reconstruction theorem for genus~$2$ invariants in the style of Kontsevich-Manin and Getzler.

In \cite{liug2semisimple} Liu expresses all genus~$2$ Gromov-Witten invariants of projective varieties with semisimple quantum cohomology in terms of genus~$0$ and~$1$ invariants.
However there are many varieties that are not semisimple yet satisfy Getzler's hypothesis.

Our main result is a reconstruction theorem for genus~$2$ invariants:
\newtheorem*{g2rec}{Theorem \ref{g2reconstruction}}
\begin{g2rec}
	\textit{If $P^i(X)=0$ for $i>2$, then all (including descendant) genus two Gromov-Witten invariants can be reconstructed recursively from genus two invariants with at most two points and invariants of lower genus.}
\end{g2rec}
We prove this using 3 tautological relations that we obtain from the Pixton relations.

Our approach is similar to the approach of Getzler.
In \cite[Definition 3.1]{getzlerg1} Getzler defines the \emph{symbol} of a tautological relation by first pulling that relation back along the forgetful map to obtain a relation between Gromov-Witten invariants, and then setting most of it terms to zero.
There is a total order on Gromov-Witten invariants such that the symbol consists exactly of those terms that are of maximal order.
So when one solves a system of equations obtained from taking symbols, one can then express the solved invariants in terms of lower order invariants.

We will describe a formal framework for this method.
As part of this we extend Getzler's notion of a symbol to the case where there are $\psi$-classes (see Definition \ref{symboldef}), as in genus~$2$ we can no longer eliminate the $\psi$-classes.

The fact that we can reduce to one point in genus~$1$ and two points in genus~$2$ leads one to speculate if we could use the Pixton relations to reconstruct genus $g$ Gromov-Witten invariants from $g$-pointed ones.
We do not have much evidence in this direction besides the fact that for all $g$ there seems be a \emph{new} relation in $R^{g+1}(\Mbstack_{g,2g+2})$ for which the part that contributes to the symbol has a simple form (it contains no $\psi$-classes). Pixton (\cite{pixtonunpublished}) has proven that there is always a new relation in $R^{g+1}(\Mbstack_{g,2g+2})$, but his formula for the part that contributes to the symbol is conjectural.

While we believe that the main significance of our result is theoretical, we have tried to apply our theorem to a simple yet nontrivial example.
However the requirement to know all $2$-pointed genus~$2$ Gromov-Witten invariants turns out to be quite demanding.
By the hard Lefschetz theorem all K\"ahler surfaces have no primitive cohomology after degree two.
But most of these spaces have infinitely many invariants with at most 2 points.
Finding a strategy that calculates invariants with at most 2 points might not be much easier than finding a strategy that calculates all invariants.

So applying the theorem for spaces where the Gromov-Witten invariants are yet unkown appears to be quite difficult.
But one can still use the algorithm prescribed by the theorem for a reconstruction.

In this way we use one of the relations prescribed by the theorem to construct a new algorithm specific to $X_r$, the blowup of $\mathbb{P}^2$ at $r$ points in general position.
\newtheorem*{p2blowup}{Theorem \ref{p2blowupcomputation}}
\begin{p2blowup}
	\textit{The algorithm described in Sections \ref{dpg0rec}, \ref{dpg1rec}, and \ref{dpg2rec} reconstructs all genus~$0$,$1$, and $2$ Gromov-Witten invariants with descendants of $X_r$ from the finitely many initial cases in Lemma~\ref{DPbasecases} and $<pt^2>^{X_r}_{0,H}=1$.}
\end{p2blowup}
The genus~$0$ case has already been done by G\"ottsche and Pandharipande in \cite{gotpand}.
We have added calculations for genus~$1$ and~$2$.
Our approach is an extension of theirs and of the computation of the genus~$2$ invariants of $\mathbb{P}^2$ by Belorousski and Pandharipande in \cite{belpandg2}.
When $r\leq8$, $X_r$ is a del Pezzo surface.
The \emph{primary} Gromov-Witten invariants of del Pezzo surfaces are enumerative and algorithms to compute them in any genus are found in \cite{vakilcountingcurves}, \cite{shovalshustin}, and \cite{brugalle}.
In the unpublished paper \cite{parkerP2blowups} Parker describes a method to calculate primary Gromov-Witten invariants of $X_r$ in any genus.
Our algorithm seems to be the only existing one that computes the full theory, including descendant invariants, in genus~$2$.
(Note that Liu's reconstruction theorem in \cite{liug2semisimple} could also be applied to $X_r$ as it is semisimple.)
We have written a computer program that implements our algorithm and the results agree with those stated in the literature for del Pezzo surfaces.
This computer program, together with a program we use to find the relations we need for our reconstruction theorem, can be found online at
	\text{\url{https://github.com/Wennink/gwreconstruction}}.
Both programs use the \verb|admcycles| project \cite{admcycles}.
During our search for relations we expanded upon Pixton's code to improve the calculation time and memory usage when calculating Pixton's relations.
This part of our program has been incorporated into the \verb|admcycles| project.

\subsection*{Acknowledgements}
I thank my PhD supervisor Nicola Pagani for giving me this problem to work on and for all his help and support.
I would also like to thank Aaron Pixton for helpful comments, in particular regarding the relation in $R_{g,2g+2}^{g+1}$.

\section{Conventions}
Throughout $X$ will be a smooth complex projective variety.
When we talk about cohomology classes on $X$ we implicitly take them to be homogeneous.
Unless otherwise specified all cohomology rings will be with rational coefficients.
When we say genus we always mean the arithmetic genus (unless otherwise specified).
We will make use of intersection theory as described in the book of Fulton \cite{fultonintersectiontheory} and its extension to the cohomology of proper Deligne-Mumford stacks \cite[Section 2]{abvgwidms}.

\section{Gromov-Witten theory}
In this section we go over the fundamental results in Gromov-Witten theory that we need for our reconstruction.

For $2g-2+n>0$, let $\Mbstack_{g,n}$ be the moduli space of $n$-pointed stable curves of genus~$g$.
An introduction to the moduli space of curves can be found in \cite{harrismorrison} and \cite{gac2}.
We have the forgetful maps $\pi_i:\Mbstack_{g,n+1}\rightarrow\Mbstack_{g,n}$ that forget the $i$-th point and stabilize the curve.
There are also gluing maps
\begin{align*}
	q :  \Mbstack_{g_1,n_1+1}\times\Mbstack_{g_2,n_2+1} & \rightarrow\Mbstack_{g_1+g_2,n_1+n_2},\\
	\delta :  \Mbstack_{g,n+2} & \rightarrow\Mbstack_{g+1,n},
\end{align*}
where \gls{q} glues the last marked point of each of the two curves together and \gls{delta} glues the last two markings of the same curve together.

The universal family is given by the forgetful map
\[\begin{tikzcd}
	\Mbstack_{g,n+1}\arrow{d}{\pi_{n+1}} \\
	\Mbstack_{g,n} \arrow[u,"\sigma_1\ldots\sigma_n", bend left].
\end{tikzcd}\]
Here $\sigma_1,\ldots,\sigma_n$ are the sections.
Let $\omega_{\pi_{n+1}}$ be the relative dualizing sheaf.
We define the \emph{\gls{psi}-classes} $\psi_i$ for $1\leq i\leq n$ as
\[\psi_i:=c_1(\sigma_i^*\omega_{\pi_{n+1}})\in H^2(\Mbstack_{g,n}),\]
where $c_1$ is the first Chern class.
By forgetting the point corresponding to the $\psi$-class we obtain the \emph{\gls{kappa}-classes}
\[\kappa_i:=\pi_{j\,*}(\psi^{i+1}_j)\in H^2(\Mbstack_{g,n}),\]
for $i\geq0$.

For any $g,n>0$, $X$ a smooth projective variety and $\beta\in H_2(X)$, let $\Mbstack_{g,n}(X,\beta)$ be the moduli space of $n$-pointed stable maps of genus~$g$. A construction of the moduli space of stable maps can be found in Chapter~\RN{5} of \cite{maninbookfrobenius}.
It is a proper Deligne-Mumford stack, but it may be singular and fail to be equidimensional, in general.
We have a virtual fundamental class
\[\text{\gls{virfund}}\in A_{k}(\Mbstack_{g,n}(X,\beta)),\]
where
\[k=\int_\beta c_1(T_X)+(\text{dim}(X)-3)(1-g)+n\]
is the \emph{expected dimension} of $\Mbstack_{g,n}(X,\beta)$. (See\cite{behrendfantechi} for the construction of the virtual fundamental class and its properties.)

Similarly to the moduli space of stable curves we have a universal family
\[\begin{tikzcd}
	\Mbstack_{g,n+1}(X,\beta) \arrow{r}{\ev_{n+1}} \arrow{d}{\pi_{n+1}} & X \\
	\Mbstack_{g,n}(X,\beta) \arrow[u,"\sigma_1\ldots\sigma_n", bend left]&
\end{tikzcd}\]
and \emph{\gls{tildepsi}-classes}
\[\tilde\psi_i:=c_1(\sigma_i^*\omega_{\pi_{n+1}})\in H^2(\Mbstack_{g,n}(X,\beta)).\]

\begin{definition}
Let $\beta\in H_2(X)$ and $\gamma_1,\ldots,\gamma_n\in H^*(X)$.
We define a \emph{Gromov-Witten invariant} of $X$ to be
\[\text{\gls{gwi}}:=\int_{[\Mbstack_{g,n}(X,\beta)]^\text{vir}}\tilde\psi_1^{a_1}\cup\cdots\cup\tilde\psi_n^{a_n}\cup\ev^*(\gamma_1\otimes\cdots\otimes\gamma_n).\]

If $a_i$ is zero then we may simply write $\gamma_i$ in place of $\tau_0(\gamma_i)$.
Later we will fix $X$ and leave out $X$ in the notation.
Invariants without $\psi$-classes are called \emph{primary invariants} while invariants with $\psi$-classes are called \emph{descendant invariants}.
\end{definition}
Note that for a Gromov-Witten invariant to be nonzero, $\beta$ needs to be effective.

Gromov-Witten invariants inherit the supercommutativity of the cohomology of $X$, i.e.
\[\left<\cdots\tau_{k_i}(\gamma_i)\tau_{k_{i+1}}(\gamma_{i+1})\cdots\right>_{g,\beta}^X=
(-1)^{|\gamma_i||\gamma_{i+1}|}\left<\cdots\tau_{k_{i+1}}(\gamma_{i+1})\tau_{k_i}(\gamma_i)\cdots\right>_{g,\beta}^X.\]

\subsection{The tautological ring}
We will obtain a relation in the cohomology ring of the moduli space of stable maps by pulling back relations from the tautological ring of the moduli space of stable curves.
The tautological ring is a well studied subring of the cohomology ring that contains most classes arising from geometric constructions.

\begin{definition}
	The system of \emph{tautological rings} of the moduli space of stable curves is the minimal system of $\mathbb{Q}$-subalgebras $\text{\gls{tautring}} \subset \{H^{2*}(\Mbstack_{g,n},\mathbb{Q})\}$ that is closed under pushforward along all the natural gluing and forgetful maps.
\end{definition}

\begin{remark}
	The tautological ring can also be defined as a system of $\mathbb{Q}$-subalgebras of the Chow rings $A^{*}(\Mbstack_{g,n},\mathbb{Q})$.
	The image of the Chow tautological ring under the cycle map is the tautological ring in cohomology.
	Everything we state about the tautological ring of the moduli space of curves holds in both Chow and cohomology.
\end{remark}

From an $n$-pointed stable curve we obtain an $n$\emph{-pointed stable graph} by taking the irreducible components as vertices and nodes as edges.
Marked points become legs.
The two half edges that make up an edge correspond to the inverse images of a node under the normalization map of the curve.
The genus of a vertex is the geometric genus of the corresponding irreducible component.

An $n$-pointed stable graph $\Gamma$ prescribes a gluing of moduli spaces
\[\xi_\Gamma:=\prod_{v\in V}\Mbstack_{g(v),n(v)}\rightarrow\Mbstack_{g,n},\]
where $V$ is the set of vertices of $\Gamma$.
We call $g=h^1(\Gamma)+\sum_{v\in V} g(v)$ the genus of $\Gamma$.
(The first Betti number $h^1$ of a graph is the number of independent loops.)

Let $\theta$ be a monomial on $\prod_{v\in V}\Mbstack_{g(v),n(v)}$ that is a product of $psi$- and $\kappa$-classes pulled back along the projection maps.
The stable graph $\Gamma$ together with $\theta$ forms \emph{decorated stable graph} \gls{decgraph}.
It has a corresponding \emph{decorated stratum class}
\[[\Gamma_\theta]:=\frac{1}{|\text{Aut}(\Gamma)|}{\xi_\Gamma}_*(\theta)\in H^*(\Mbstack_{g,n}).\]
	We write $[\Gamma]:=[\Gamma_1]$ for a decorated stratum class with trivial decoration.
When we draw a decorated graph we write the $\psi$-classes at the corresponding half-edges and the $\kappa$-classes at corresponding vertices.
If we draw a decorated graph without specifying a numbering on $n'$ legs, it corresponds to the $\mathfrak{S}_n'$-invariant sum over all possible ways to number the legs, divided by the size of the $\mathfrak{S}_n'$-orbit.
For example
\[
\left[	\begin{tikzpicture}[baseline,el/.style = {inner sep=2pt, align=left, sloped},every child node/.style={inner sep=1,font=\tiny}]
      \tikzstyle{level 1}=[counterclockwise from=0,level distance=9mm,sibling angle=120]
			\node (A0) [draw,circle,inner sep=1] at (0:1) {$\scriptstyle{4}$} child {node {}};
      \tikzstyle{level 1}=[counterclockwise from=120,level distance=9mm,sibling angle=120]
      \node (A1) [draw,circle,inner sep=1] at (180:1) {$\scriptstyle{0}$} child {node {1}} child {node {}};

			\path (A0) edge (A1);
	\end{tikzpicture}\right]
	=
	\frac{1}{2}\left[	\begin{tikzpicture}[baseline,el/.style = {inner sep=2pt, align=left, sloped},every child node/.style={inner sep=1,font=\tiny}]
      \tikzstyle{level 1}=[counterclockwise from=0,level distance=9mm,sibling angle=120]
			\node (A0) [draw,circle,inner sep=1] at (0:1) {$\scriptstyle{4}$} child {node {2}};
      \tikzstyle{level 1}=[counterclockwise from=120,level distance=9mm,sibling angle=120]
      \node (A1) [draw,circle,inner sep=1] at (180:1) {$\scriptstyle{0}$} child {node {1}} child {node {3}};

			\path (A0) edge [bend left=0.000000] (A1);
	\end{tikzpicture}\right]
	+\frac{1}{2}\left[	\begin{tikzpicture}[baseline,el/.style = {inner sep=2pt, align=left, sloped},every child node/.style={inner sep=1,font=\tiny}]
      \tikzstyle{level 1}=[counterclockwise from=0,level distance=9mm,sibling angle=120]
			\node (A0) [draw,circle,inner sep=1] at (0:1) {$\scriptstyle{4}$} child {node {3}};
      \tikzstyle{level 1}=[counterclockwise from=120,level distance=9mm,sibling angle=120]
      \node (A1) [draw,circle,inner sep=1] at (180:1) {$\scriptstyle{0}$} child {node {1}} child {node {2}};

			\path (A0) edge [bend left=0.000000] (A1);
	\end{tikzpicture}\right]
.\]

\begin{theorem} [Proposition 11 in \cite{grabpandnontaut}]
	The tautological ring \gls{tautring} is generated additively by the decorated strata class $[\Gamma_\theta]$ for all $n$-pointed genus $g$ stable graphs $\Gamma$ and all monomials $\theta$ in $\kappa$- and $\psi$-classes in $H^*(\Mbstack_\Gamma)$.
\end{theorem}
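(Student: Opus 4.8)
The plan is to show that the $\mathbb{Q}$-linear span $T_{g,n}$ of the decorated strata classes $[\Gamma_\theta]$ coincides with $R^*(\Mbstack_{g,n})$. Since $R^*(\Mbstack_{g,n})$ is by definition the \emph{minimal} system of $\mathbb{Q}$-subalgebras closed under the gluing and forgetful pushforwards, it suffices to prove two things: (i) every decorated strata class is tautological, so that $T_{g,n}\subseteq R^*(\Mbstack_{g,n})$; and (ii) the system $\{T_{g,n}\}$ is itself a system of $\mathbb{Q}$-subalgebras closed under all the defining pushforwards, so that minimality forces $R^*(\Mbstack_{g,n})\subseteq T_{g,n}$. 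Together these give equality and hence the asserted additive generation.

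For (i) I would first check that the $\psi$- and $\kappa$-classes are tautological. The $\kappa$-classes reduce to the $\psi$-classes via $\kappa_a=\pi_{n+1\,*}(\psi_{n+1}^{a+1})$, so it is enough to treat the $\psi_i$. These are obtained by a bootstrap from low-dimensional base cases (on $\Mbstack_{0,4}\cong\mathbb{P}^1$ each $\psi_i$ is a boundary point, hence the pushforward of a fundamental class along a gluing map) together with the comparison relation $\pi^*\psi_i=\psi_i-D_{i,n+1}$ under the forgetful map, the boundary divisor $D_{i,n+1}$ being a gluing-map image. Granting this, each $[\Gamma_\theta]=\tfrac{1}{|\mathrm{Aut}(\Gamma)|}\,\xi_{\Gamma\,*}(\theta)$ is tautological: $\xi_\Gamma$ is a composite of the gluing maps $q$ and $\delta$, and $\theta$ is a monomial in $\psi$- and $\kappa$-classes, so the projection formula shows the pushforward lands in $R^*(\Mbstack_{g,n})$.

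For (ii) the routine parts are: the fundamental class $[\Mbstack_{g,n}]=[\Gamma_1]$ for $\Gamma$ the one-vertex graph lies in $T_{g,n}$; the gluing pushforward of a decorated strata class is again a decorated strata class, since composing $\xi_\Gamma$ with $q$ or $\delta$ simply merges two vertices or adds an edge; and the forgetful pushforward $\pi_*[\Gamma_\theta]$ is a combination of decorated strata, using $\pi_*(\psi_{n+1}^{a+1}\cdot\pi^*\alpha)=\kappa_a\cdot\alpha$ (which is precisely where new $\kappa$-classes are produced) together with the combinatorics of stabilizing the vertex that carried the forgotten leg. The substantive step, and the main obstacle, is closure under the cup product, i.e.\ showing $[\Gamma_\theta]\cdot[\Gamma'_{\theta'}]\in T_{g,n}$.

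For the product I would compute $\xi_{\Gamma\,*}(\theta)\cdot\xi_{\Gamma'\,*}(\theta')$ by the excess-intersection formula applied to the fiber product of the two gluing maps. This fiber product decomposes combinatorially into boundary strata indexed by the common degenerations $\Gamma''$ of $\Gamma$ and $\Gamma'$ (stable graphs that contract to both $\Gamma$ and $\Gamma'$), each again the image of a gluing map $\xi_{\Gamma''}$ for a deeper graph; the excess normal-bundle contribution along each such stratum is governed by the self-intersection formula $c_1(N_\xi)=-\psi'-\psi''$ at every node common to both graphs. Expanding these contributions produces exactly new $\psi$-decorations at the half-edges, so the product is a $\mathbb{Q}$-combination of decorated strata classes $[\Gamma''_{\theta''}]$. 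Carefully bookkeeping the automorphisms, the matching of half-edges, and these normal-bundle corrections is the technical heart of the argument; once it is in place, closure under products follows and, together with the other steps, minimality yields $R^*(\Mbstack_{g,n})=T_{g,n}$.
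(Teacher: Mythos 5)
The paper itself offers no proof of this statement: it is quoted directly from Graber--Pandharipande (Proposition 11 of \cite{grabpandnontaut}), so the only available comparison is with their argument. Your architecture is exactly theirs: let $T_{g,n}$ be the $\mathbb{Q}$-span of decorated strata classes, show $T_{g,n}\subseteq R^*(\Mbstack_{g,n})$, show that $\{T_{g,n}\}$ is itself a system of $\mathbb{Q}$-subalgebras closed under the gluing and forgetful pushforwards (multiplicative closure being the content of their Equation~(11), obtained by precisely the excess-intersection analysis you sketch, with excess class given by the product of $(-\psi'-\psi'')$ over the nodes shared by the two graphs), and conclude by minimality. So the key decomposition and the key lemma coincide with the source.

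There is, however, one genuinely circular step in your part (i). To show that $\psi_i$ is tautological you bootstrap via the comparison $\pi^*\psi_i=\psi_i-D_{i,n+1}$; but concluding anything from this requires knowing that the tautological system is closed under \emph{pullback} along forgetful maps, which is not among the defining closure operations (only pushforwards are), and pullback-closure is standardly deduced as a \emph{consequence} of the very strata-algebra description you are trying to prove. The repair is to avoid pullbacks entirely: the divisor $D_{i,n+1}=\xi_*[\Mbstack_{g,n}]$, for $\xi$ the gluing map attaching a rational bubble carrying the legs $i$ and $n+1$, is tautological; its self-intersection is computed by the normal bundle of $\xi$, whose first Chern class restricts to $-\psi_i$ on the genus-$g$ side (the $\psi$-class on the $\Mbstack_{0,3}$ bubble vanishes); and since $\pi_{n+1}\circ\xi$ is the identity, pushing forward gives
\begin{equation*}
\psi_i=-\pi_{n+1\,*}\bigl(D_{i,n+1}\cdot D_{i,n+1}\bigr),
\end{equation*}
an expression using only products and pushforwards. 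The $\kappa$-classes then follow from $\kappa_a=\pi_{n+1\,*}(\psi_{n+1}^{a+1})$. With that repair, and granting the automorphism and half-edge bookkeeping in the product formula that you correctly identify as the technical heart (and which is carried out in \cite{grabpandnontaut}), your argument is the standard one.
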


Whenever we pullback a decorated stratum class along the forgetful map we use the following Lemma.
\begin{lemma}[Lemma~17.4.28 in \cite{gac2}]
	\label{pullbackcurves}
	\begin{enumerate}[label=\roman*)]
	\item $\pi_j^*(\kappa_i)=\kappa_i-\psi_j^i$,
	\item $\pi_j^*(\psi_i)=\psi_i-\Bigl[\begin{tikzpicture}[baseline,el/.style = {inner sep=2pt, align=left, sloped},every child node/.style={inner sep=1,font=\tiny}]
      \tikzstyle{level 1}=[counterclockwise from=315,level distance=9mm,sibling angle=90]
			\node (A0) [draw,circle,inner sep=1] at (0:1.3) {$\scriptstyle{0}$} child {node {j}} child {node {i}};
      \tikzstyle{level 1}=[counterclockwise from=135,level distance=9mm,sibling angle=90]
			\node (A1) [draw,circle,inner sep=1] at (0:0) {$\scriptstyle{g}$} child {node {}} child {node {}};

      \path (A0) edge (A1);
	\path    (-.7,-0.9) -- (-.7,1) node [midway,sloped] {$\dots$};
	\end{tikzpicture}\;\;\Bigr]$,
		\item $\pi_j^*([\Gamma])=\sum_{v\in\Gamma}[\Gamma_v]$, where $[\Gamma_v]$ is the graph obtained by adding the $j$-th leg to the vertex $v$.
	\end{enumerate}
\end{lemma}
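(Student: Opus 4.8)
The three identities are pullback computations along the forgetful map $\pi_j:\Mbstack_{g,n+1}\to\Mbstack_{g,n}$, which presents the source as the universal curve over the target. My plan is to prove (iii) first as a flatness/transversality statement, then establish the $\psi$-comparison (ii) by a line-bundle computation on the universal curve, and finally deduce (i) from (ii) together with the push-forward definition of $\kappa$, via a base change that must be corrected along a diagonal.

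For (iii), I would use that $\pi_j$ is flat of relative dimension one, so $\pi_j^*[\Gamma]$ is a cycle of the expected codimension supported on $\pi_j^{-1}(\xi_\Gamma(\Mbstack_\Gamma))$. Set-theoretically this preimage is the union, over the vertices $v$ of $\Gamma$, of the loci where the forgotten point $j$ lies on the component indexed by $v$; each such locus is the image of the gluing map attached to the graph $\Gamma_v$ obtained by adding the leg $j$ to $v$. Since the generic point of each $\Gamma_v$-stratum carries $j$ on a unique, generically reduced component, every branch occurs with multiplicity one, giving $\pi_j^*[\Gamma]=\sum_{v}[\Gamma_v]$. The only things to verify are that no stratum acquires excess multiplicity and that stabilization does not secretly identify two of the $\Gamma_v$.

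For (ii), I would show that $\psi_i-\pi_j^*\psi_i$ is supported on the divisor $D_{ij}$ drawn in the statement (where $i$ and $j$ sit on a rational bubble) and then fix the coefficient by restriction. Away from $D_{ij}$, forgetting $j$ leaves the curve near marking $i$ unchanged, so the two cotangent lines agree and the difference is a multiple $c\,[D_{ij}]$. Restricting to $D_{ij}\cong\Mbstack_{g,n}$, the class $\psi_i$ vanishes there because $i$ lies on a contracted three-pointed rational component, while $\pi_j^*\psi_i$ restricts to the cotangent class at the node; as the normal bundle of $D_{ij}$ is the negative of that same node class, comparing the two sides forces $c=1$, hence $\pi_j^*\psi_i=\psi_i-[D_{ij}]$.

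For (i), I would write the target class as $\kappa_i=\rho_*(\psi_\star^{\,i+1})$ for the universal curve $\rho$ adjoining an auxiliary point $\star$, and apply base change along $\pi_j$ through the space $\Mbstack_{g,n+2}$. The relevant fibre square fails to be Cartesian precisely along the diagonal section, so base change carries a correction term; computing $\pi_j^*\psi_\star^{\,i+1}$ by (ii), pushing forward, and collecting the diagonal contribution yields $\kappa_i-\psi_j^{\,i}$. I expect the main obstacle to be exactly this diagonal/excess bookkeeping: the fibre product of the universal curve with itself is singular along the diagonal, and $\Mbstack_{g,n+2}$ is its resolution, so one must account for the blow-up of the diagonal carefully, and it is this that produces the power $\psi_j^{\,i}$ rather than a naive $\rho_*\pi_j^*$. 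Verifying multiplicity one in (ii) is a secondary, but related, point of care.
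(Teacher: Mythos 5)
You should first be aware that the paper contains no proof of this statement: it is imported verbatim as Lemma~17.4.28 of \cite{gac2} and used as a black box, so your sketch can only be measured against the standard argument in that reference, which it reconstructs in roughly the right order. Parts (ii) and (iii) are essentially sound, but each has a point you must tighten. In (ii), your restriction argument reads, on $D_{ij}\cong\Mbstack_{g,n}$, as $0=\psi_{\mathrm{node}}-c\,\psi_{\mathrm{node}}$, which pins down $c=1$ only if $\psi_{\mathrm{node}}\neq0$ in $H^2(\Mbstack_{g,n})$; this fails exactly for $(g,n)=(0,3)$, where you must check $\psi_i=[D_{ij}]$ on $\Mbstack_{0,4}$ by hand (or, as in \cite{gac2}, exhibit the natural map of line bundles $\pi_j^*\mathbb{L}_i\to\mathbb{L}_i$ and show it vanishes simply along $D_{ij}$, which gives the multiplicity with no case distinction). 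In (iii), the danger you flag is real but is not caused by stabilization: it is caused by graph automorphisms. When $\mathrm{Aut}(\Gamma)$ permutes vertices, distinct $v$ give \emph{equal} classes $[\Gamma_v]$, and since $[\Gamma_v]$ carries the normalization $1/|\mathrm{Aut}(\Gamma_v)|$ while $[\Gamma]$ carries $1/|\mathrm{Aut}(\Gamma)|$, the naive vertex sum over-counts: for the two-vertex graph of genus $1+1$ in $\Mbstack_{2}$, flat pullback of $\delta_1$ to $\Mbstack_{2,1}$ is the reduced separating divisor class taken once, not twice. The clean way to get this right is to run your flatness argument through the Cartesian square: $\pi_j^{-1}$ of the stratum is the universal curve over $\Mbstack_\Gamma$, whose irreducible components are exactly the $\Mbstack_{\Gamma_v}$, each with multiplicity one because the universal curve is reduced; converting $\xi_{\Gamma_v*}(1)$ into $[\Gamma_v]$ by orbit--stabilizer then yields coefficients $|\mathrm{Aut}(\Gamma_v)|/|\mathrm{Aut}(\Gamma)|$, i.e.\ one contribution per $\mathrm{Aut}(\Gamma)$-orbit of vertices.

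The genuine conceptual error is in your narrative for (i). The push--pull identity $\pi_j^*\pi_{\star*}(\alpha)=\pi'_{\star*}\pi_j'^{\,*}(\alpha)$ \emph{does} hold even though $\Mbstack_{g,n+2}$ is not the fibre product $W=\Mbstack_{g,n+1}\times_{\Mbstack_{g,n}}\Mbstack_{g,n+1}$: the map $\rho:\Mbstack_{g,n+2}\to W$ is proper and birational onto the irreducible, reduced $W$, so $\rho_*[\Mbstack_{g,n+2}]=[W]$, and combining flat base change on the honest Cartesian square with the projection formula gives the identity above with no correction term whatsoever. There is no ``failure of base change'' to account for; the entire term $-\psi_j^i$ comes from part (ii) applied upstairs. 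Concretely, $\pi_j'^{\,*}\psi_\star=\psi_\star-D_{j\star}$, and since $\psi_\star\cdot D_{j\star}=0$ (the bubble is a rigid $3$-pointed rational curve) all mixed terms vanish, so $(\pi_j'^{\,*}\psi_\star)^{i+1}=\psi_\star^{i+1}+(-D_{j\star})^{i+1}=\psi_\star^{i+1}-\iota_*(\psi_{\mathrm{node}}^{\,i})$, using $D_{j\star}|_{D_{j\star}}=-\psi_{\mathrm{node}}$; pushing forward along $\pi'_\star$, under which $D_{j\star}\cong\Mbstack_{g,n+1}$ maps isomorphically with $\psi_{\mathrm{node}}$ becoming $\psi_j$, yields $\kappa_i-\psi_j^i$. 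Your stated computation (``compute by (ii), push forward, collect the diagonal contribution'') is exactly this, so the flaw is one of attribution rather than of mechanism --- but it matters: if you implemented both a putative base-change correction \emph{and} the $D_{j\star}$ terms from (ii), you would double count and land on the wrong class.
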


We can avoid the use of $\psi$-classes in genus~$0$ and~$1$, while we only need singlular $\psi$-classes in genus~$2$:
\begin{proposition} [Proposition 2.5 in \cite{halfspin2proportionalities}]
	\label{psivanishing}
Any monomial of $\psi$-classes of degree at least $\max(g, 1)$ in $R^*(\Mbstack_{g,n})$ can be
expressed in terms of the boundary strata classes that involve no $\kappa$-classes, that is, in terms of the dual
graphs with at least one edge, decorated only by $\psi$-classes.
\end{proposition}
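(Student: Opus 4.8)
The plan is to separate the statement into two steps: first, that a $\psi$-monomial of the prescribed degree already restricts to zero on the open locus $\mathcal{M}_{g,n}$ of smooth curves and is therefore supported on the boundary; and second, that the resulting boundary expression can be rewritten so that no $\kappa$-classes appear in the vertex decorations. I would dispose of $g=0$ first, where $\max(g,1)=1$: the classical formula expresses each $\psi_i$ on $\overline{\mathcal{M}}_{0,n}$ as a sum of undecorated boundary divisors, and multiplying by the remaining $\psi$-factors and restricting them to each boundary stratum (a product of smaller spaces $\overline{\mathcal{M}}_{0,k}$) keeps everything a $\psi$-decorated boundary class while never producing a $\kappa$-class. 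Thus $g=0$ needs no further input.

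For $g\geq 1$ the essential geometric input is the vanishing of the tautological ring of the open moduli of smooth curves in high degree: every monomial in $\psi$- and $\kappa$-classes of degree at least $g$ restricts to zero on $\mathcal{M}_{g,n}$, a bound independent of $n$ (the vanishing theorem of Getzler and Ionel, also following from Graber--Vakil; for $g=1$ it is Getzler's genus-one relation). Granting this, a $\psi$-monomial of degree $\geq\max(g,1)=g$ vanishes off the boundary, so it is a class supported on $\partial\overline{\mathcal{M}}_{g,n}$; by the additive generation of the tautological ring by decorated strata classes \cite{grabpandnontaut}, it equals a $\mathbb{Q}$-linear combination of classes $[\Gamma_\theta]$ with $\Gamma$ having at least one edge and $\theta$ a monomial in $\psi$- and $\kappa$-classes on the vertex factors $\overline{\mathcal{M}}_{g(v),n(v)}$.

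The remaining, and genuinely delicate, step is to strip the $\kappa$-classes from these decorations. The tool is $\kappa_a=\pi_*(\psi^{a+1})$ together with the comparison $\pi_j^*(\kappa_a)=\kappa_a-\psi_j^a$ of Lemma~\ref{pullbackcurves}(i), which lets one trade a factor $\kappa_a$ on a vertex $v$ for a $\psi$-power at an extra marked point followed by a forgetful pushforward. I would organize this as a double induction: outermost on the pair $(g,n)$, using that in any stratum with at least one edge a genus-$g$ vertex necessarily carries strictly fewer than $n$ marked points while all other vertices have genus below $g$, so the statement is available by induction whenever the vertex decoration already has degree $\geq g(v)$ (and genus-$0$ vertices are handled as in the first paragraph); and innermost on the total $\kappa$-degree, using the trade above to lower it whenever the inductive statement does not directly apply. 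The main obstacle is precisely the bookkeeping here: one must check that each $\kappa$-to-$\psi$ trade produces only $\psi$- and $\kappa$-decorated boundary strata of $\overline{\mathcal{M}}_{g,n}$ of strictly smaller complexity, so that the process terminates and one is never forced to invoke the high-degree vanishing in a degree below $g(v)$, where it fails.
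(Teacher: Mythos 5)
First, a point of orientation: the paper does not prove Proposition~\ref{psivanishing} at all --- it imports it by citation from \cite{halfspin2proportionalities} --- so your attempt can only be judged on its own merits, and on those merits it has two genuine gaps. The first is the deduction ``the monomial restricts to zero on $\Mstack_{g,n}$, hence, by the additive generation of the tautological ring by decorated strata, it equals a $\mathbb{Q}$-linear combination of classes $[\Gamma_\theta]$ with at least one edge.'' This is a non sequitur: the monomial is itself one of the additive generators (the decoration on the edgeless graph), so additive generation gives nothing new, and vanishing on the open locus only says the class is the pushforward of \emph{some} cohomology class supported on the boundary, not of a tautological one. The statement you actually need --- that the kernel of the restriction $R^*(\Mbstack_{g,n})\to R^*(\Mstack_{g,n})$ is spanned by boundary decorated strata --- is essentially the proposition being proved (minus the $\kappa$-refinement), so the reasoning is circular. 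The repair is to invoke the strong, tautological-support form of the Getzler--Ionel vanishing (Ionel's explicit boundary expressions, Faber--Pandharipande's proof via relative maps, or Graber--Vakil's Theorem~$\star$), which outputs a sum of boundary \emph{tautological} classes directly; that is a substantive theorem, not a formal consequence of the weak vanishing you quote.

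The second gap is the $\kappa$-stripping, which is where the real content of the ``$\psi$-only'' refinement lies, and which you present as a plan rather than a proof. The mechanism you propose is circular as described: trading $\kappa_a$ at a vertex for a $\psi^{a+1}$ at an added leg moves you to $\Mbstack_{g,n+1}$, i.e.\ \emph{against} the direction of your outer induction on $(g,n)$, and to return you must push forward along the forgetful map --- but the pushforward of a $\psi$-power at the forgotten point is by definition a $\kappa$-class, so the factor you removed reappears. You also flag, correctly, that one must never be forced to invoke the vanishing in degree below $g(v)$, but you give no mechanism preventing this, and the danger is real: low-degree $\kappa$-classes on positive-genus vertex moduli are genuinely not expressible by $\psi$-decorated boundary classes (for instance $\kappa_1$ restricts nontrivially to $\Mstack_g$ for $g\geq3$). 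Until both steps are actually carried out --- citing the strong form of the vanishing, and giving a terminating rewriting procedure for the $\kappa$-decorations --- what you have is an outline whose two essential ingredients are missing.
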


\subsection{Tautological relations}
The generators of the tautological ring form the strata algebra.
\begin{definition}
	Define the \emph{strata algebra} \gls{strataalgebra} to be the free $\mathbb{Q}$-algebra generated by the $n$-pointed genus $g$ decorated strata classes $[\Gamma_\theta]$, for which the degree of $\theta$ at the vertex $v$ does not exceed the dimension $3g(v)-3+n(v)$ of the moduli space at $v$.
	The multiplication on $S_{g,n}$ is the one inherited from the intersection theory of $\Mbstack_{g,n}$, as described in \cite[Equation~(11)]{grabpandnontaut}.
\end{definition}
We define \gls{relations}, the $\mathbb{Q}$-algebra of \emph{tautological relations}, by the short exact sequence
\[0\rightarrow R_{g,n}\rightarrow S_{g,n} \rightarrow R^*(\Mbstack_{g,n})\rightarrow 0.\]
As expected we denote the restriction to each degree $r\in\mathbb{Z}_{\geq0}$ by
\[0\rightarrow R_{g,n}^r\rightarrow S_{g,n}^r \rightarrow R^r(\Mbstack_{g,n})\rightarrow 0.\]
The Pixton relations from \cite{relationsvia3spin} and \cite{jandapixtonrelschow} form a subalgebra $\text{\gls{pixtonrels}}\subseteq R_{g,n}$.
It is conjectured that $P_{g,n}=R_{g,n}$.

\begin{definition}
	\label{gwpbdefdef}
	For every $\beta\in H_2(X)$ we define a map
	\[<.\, ;\ldots >_\beta:S_{g,n}\otimes \left(H^*(X)\right)^{\otimes n}\rightarrow \mathbb{Q},\]
	by taking
\begin{equation}
	\label{gwpbdef}
	<S;\gamma_1,\ldots,\gamma_n>_\beta:=\int_{[\Mbstack_{g,n}(X,\beta)]^\text{vir}}F^*(p(S))\cup\ev^*(\gamma_1\otimes\cdots\otimes\gamma_n),
\end{equation}
where $p$ is the map $S_{g,n}\twoheadrightarrow R^*(\Mbstack_{g,n})$ and $F:\Mbstack_{g,n}(X,\beta)\rightarrow \Mbstack_{g,n}$ is the forgetful map that only remembers and stabilizes the source curve.
\end{definition}

\subsection{The Splitting Lemma}
We can use the Splitting Lemma to write the right-hand side of (\ref{gwpbdef}) as a polynomial in Gromov-Witten invariants.
The statements of the Splitting Lemma we found in the literature were either more abstract or less general, so we have chosen to give this explicit description here.

Let $\chi_\Gamma$ be the pullback of $\xi_\Gamma$ along the forgetful map $F$ that only remembers and stabilizes the source curve.
We have the fiber square
\[\begin{tikzcd} 
F^*(\Mbstack_\Gamma) \arrow[r, "\chi_\Gamma"] \arrow[d,"F_\Gamma"] & \Mbstack_{g,n}(X,\beta) \arrow[d,"F"] \\
\Mbstack_\Gamma \arrow[r,"\xi_\Gamma"] & \Mbstack_{g,n},
\end{tikzcd}\]
where
\[F_\Gamma^*(\Mbstack_\Gamma)=\bigoplus_{\sum_v\beta(v)=\beta}\prod_{v\in V}\Mbstack_{g(v),n(v)}(X,\beta(v))\]
is a direct sum over all the ways to divide the degree $\beta$ among the vertices.
Since flat pullback commutes with proper pushforward on fiber squares we have
\[F^*[\Gamma_\theta]=\frac{1}{|\text{Aut}(\Gamma)|}{\chi_\Gamma}_*F_\Gamma^*(\theta).\]
The projection formula now allows us to rewrite the terms of $<S;\gamma_1,\ldots,\gamma_n>_\beta$:
\begin{multline}
	\label{pulledbackterm}
	\int_{[\Mbstack_{g,n}(X,\beta)]^\text{vir}}{\chi_\Gamma}_*F_\Gamma^*(\theta)\cup \ev^*(\gamma_1\otimes\cdots\otimes\gamma_n)=\\
	\int F_\Gamma^*(\theta)\cup\chi_\Gamma^*\left(\ev^*(\gamma_1\otimes\cdots\otimes\gamma_n)\cap[\Mbstack_{g,n}(X,\beta)]^\mathrm{vir}\right).
\end{multline}
We can then calculate $F_\Gamma^*(\theta)$ using
\begin{lemma}[See \RN{6}.3.6 in \cite{maninbookfrobenius}]
 \label{psipullback}
 Let $D_i\in H^2(\Mbstack_{g,n}(X,\beta))$ be defined by taking the closure of the locus of maps whose source curve has two irreducible components such that one of these components is rational, has only the $i$-th marked point, and the restriction of the map to this component is constant. Then
 \[F^*(\psi_i)=\tilde\psi_i-D_i.\]
\end{lemma}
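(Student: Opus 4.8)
The plan is to compare the two cotangent-line bundles whose first Chern classes give $F^*(\psi_i)$ and $\tilde\psi_i$ directly, tracking how the cotangent line at the $i$-th marked point changes under stabilization of the source curve. Recall $\tilde\psi_i=c_1(\sigma_i^*\omega_{\pi_{n+1}})$ is the cotangent line at $x_i$ on the actual source curve $C$, whereas $\psi_i$ on $\Mbstack_{g,n}$ is the cotangent line at $x_i$ on the stabilized curve, so $F^*(\psi_i)$ is the cotangent line at the image of $x_i$ after stabilization. These agree away from the locus where stabilization actually moves $x_i$, i.e.\ where $x_i$ sits on a component that $F$ contracts. To make this precise I would first produce the relative stabilization morphism: writing $p=\pi_{n+1}$ for the universal curve over $\Mbstack_{g,n}(X,\beta)$ and $F^*\Mbstack_{g,n+1}$ for the pullback along $F$ of the universal curve $\bar\pi_{n+1}$, the fiberwise construction of $F$ yields a morphism $\rho:\Mbstack_{g,n+1}(X,\beta)\to F^*\Mbstack_{g,n+1}$ over $\Mbstack_{g,n}(X,\beta)$ that contracts the unstable components and commutes with the $i$-th sections, $\rho\circ\sigma_i=F^*\sigma_i$. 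By flat base change, $F^*(\psi_i)=c_1\big(\sigma_i^*\rho^*\omega_{F^*\bar\pi_{n+1}}\big)$.

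Next I would compare the relative dualizing sheaves across $\rho$. The key input is the identity
\[\omega_p=\rho^*\omega_{F^*\bar\pi_{n+1}}\otimes\mathcal{O}(\mathcal{E}),\]
where $\mathcal{E}\subset\Mbstack_{g,n+1}(X,\beta)$ is the divisor sweeping out the components contracted by $\rho$. This is verified fiberwise on a contracted rational bubble $E$: there $\omega_C|_E=\mathcal{O}_{\mathbb{P}^1}(-1)$, while $\rho^*\omega|_E$ is trivial and $\mathcal{O}(E)|_E=\mathcal{O}_{\mathbb{P}^1}(-1)$ by the self-intersection $-1$, so the twist by $\mathcal{E}$ exactly accounts for the discrepancy. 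Pulling back along $\sigma_i$ and taking first Chern classes then gives $\tilde\psi_i=F^*(\psi_i)+\big[\sigma_i^{-1}(\mathcal{E})\big]$, where the pullback is a genuine preimage divisor since the generic point of $\Mbstack_{g,n}(X,\beta)$ has irreducible source and hence $\sigma_i$ is not contained in $\mathcal{E}$.

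Finally I would identify $\sigma_i^{-1}(\mathcal{E})$ with $D_i$: the section $\sigma_i$ meets $\mathcal{E}$ exactly where $x_i$ lies on a component contracted by $F$, and since $\sigma_i$ carries a single marked point the relevant contracted component is the rational bubble carrying only $x_i$, which is precisely the divisor $D_i$ of the statement; checking that $\sigma_i$ meets $\mathcal{E}$ transversally gives $\sigma_i^{-1}(\mathcal{E})=D_i$ with multiplicity one, whence $F^*(\psi_i)=\tilde\psi_i-D_i$. I expect the main obstacle to be the dualizing-sheaf comparison in the relative (stacky, possibly non-reduced) setting — in particular handling chains of several contracted bubbles and confirming that $\mathcal{E}$ enters with multiplicity one — together with the transversality of $\sigma_i$ and $\mathcal{E}$ needed to conclude that $\sigma_i^{-1}(\mathcal{E})=D_i$ as reduced divisors. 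Once the comparison $\omega_p=\rho^*\omega\otimes\mathcal{O}(\mathcal{E})$ is in hand, restricting to the section and taking Chern classes is routine.
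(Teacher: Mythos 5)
The paper does not prove this lemma at all --- it only cites VI.3.6 of Manin's book --- so your argument is supplying a proof rather than paralleling one. Your route (relative stabilization $\rho$ of the universal curve onto $F^*\Mbstack_{g,n+1}$, comparison of relative dualizing sheaves across $\rho$, restriction to the section $\sigma_i$) is the standard and correct skeleton for this comparison, and the reduction $F^*(\psi_i)=c_1\bigl(\sigma_i^*\rho^*\omega_{F^*\bar\pi_{n+1}}\bigr)$ via base change is fine. However, two of your steps need repair, one of them fatal to the proof \emph{of the statement as literally written}.

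First, the identity $\omega_p=\rho^*\omega_{F^*\bar\pi_{n+1}}\otimes\mathcal{O}(\mathcal{E})$ with $\mathcal{E}$ reduced is false: the worry you flagged is realized. If a chain $E_1\cup E_2$ is contracted, with $E_1$ meeting the main component and $E_2$ carrying $x_i$, then in the local surface one has $E_1^2=-2$, $E_2^2=-1$, and adjunction forces $K_S=\rho^*K_{S'}+E_1+2E_2$; the coefficients grow along chains, and the section passes through the bubble of coefficient $2$. The conclusion survives only because chains occur over loci of codimension at least $2$ in the base, so the divisor class $\sigma_i^*$ of the correction term is governed by generic points of $D_i$, where there is a single $(-1)$-bubble with coefficient one met transversally by $\sigma_i$ (locally $\mathcal{E}=\{p^*t=0\}$ with $t$ the node-smoothing parameter). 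Since $\Mbstack_{g,n}(X,\beta)$ can be non-reduced and reducible, ``agreement outside codimension two'' does not by itself identify line bundles; the clean fix is to work with the canonical map $\rho^*\omega_{F^*\bar\pi_{n+1}}\to\omega_p$ and compute the vanishing order of its restriction to $\sigma_i$ along $D_i$, or better, to prove the identity on the smooth Artin stack $\mathfrak{M}_{g,n}$ of prestable curves (where both universal curves and the contraction live) and pull it back along $\Mbstack_{g,n}(X,\beta)\to\mathfrak{M}_{g,n}$.

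Second, your closing identification ``$\sigma_i^{-1}(\mathcal{E})$ \ldots\ is precisely the divisor $D_i$ of the statement'' is not true for the $D_i$ the paper defines, and this is where the proof of the literal statement breaks. A component contracted by $F$ is rational with at most two special points; if it carries $x_i$ it carries exactly $x_i$ and one node, and then \emph{stability of the map forces $f$ to be non-constant on it}. The paper's $D_i$ instead requires $f$ to be constant on that component; by map-stability such a component needs at least three special points, hence at least two nodes, so the locus described in the statement has codimension at least $2$ and cannot be the correction divisor. What your argument actually produces is the (correct, classical) divisor of maps whose source is $C'\cup E$ with $E$ rational, carrying only $x_i$ and one node, and with $f|_E$ of positive degree --- i.e.\ the word ``constant'' in the paper's definition of $D_i$ is an error. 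You should have caught this mismatch rather than asserted the identification: as written, your last step equates two different loci. With $D_i$ corrected to the positive-degree bubble divisor, and with the multiplicity issue above handled, your argument is a valid proof.
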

\begin{remark}
This is sufficient for us, since we can express all tautological relations without using $\kappa$-classes when the genus is at most $2$.
In general a class $\kappa_i$ can be replaced by an extra leg with a $\psi^{i+1}$ class on it.
The proof for this is similar to the proof of the Dilaton Equation (see Lemma \ref{dilaton}) which corresponds to the special case $i=0$.
\end{remark}
The \emph{Splitting Lemma} \ref{splittinglemma} gives us a way to express the remaining part of (\ref{pulledbackterm}),
\[\chi_\Gamma^*\left(\ev^*(\gamma_1\otimes\cdots\otimes\gamma_n)\cap[\Mbstack_{g,n}(X,\beta)]^\text{vir}\right),\]
as a sum of products of Gromov-Witten invariants in $F^*(\Mbstack_\Gamma)$.

The morphism $\chi_\Gamma$ is the morphism that glues moduli spaces of stable maps.
Similar to the situation for stable curves, these gluing morphisms can be constructed from the base cases
\begin{align}
	\label{gluemapmap}
	\begin{split}
	\tilde q:\Mbstack_{g_1,n_1+1}(X,\beta_1)\times_X\Mbstack_{g_2,n_2+1}(X,\beta_2)&\rightarrow\Mbstack_{g_1+g_2,n_1+n_2}(X,\beta_1+\beta_2),\\
	\tilde\delta: \Mbstack_{g,n+2}^{\,\delta}(X,\beta)&\rightarrow\Mbstack_{g+1,n}(X,\beta).
\end{split}
\end{align}
We can only glue maps when the points being glued map to the same point in $X$,
i.e. we have fiber squares
\begin{equation*}
\begin{tikzcd}
 \Mbstack_{g_1,n_1+1}(X,\beta_1)\times_X\Mbstack_{g_2,n_2+1}(X,\beta_2) \arrow[r, hook] \arrow[d] & \Mbstack_{g_1,n_1+1}(X,\beta_1)\times\Mbstack_{g_2,n_2+1}(X,\beta_2) \arrow[d,"\ev_{n_1+1} \times \ev_{n_1+2}"] \\
X \arrow[r,hook, "\Delta_X"] & X\times X,
\end{tikzcd}
\end{equation*}
and
\begin{equation*}
\begin{tikzcd}
 \Mbstack_{g,n+2}^{\,\delta}(X,\beta) \arrow[r, hook] \arrow[d] & \Mbstack_{g,n+2}(X,\beta) \arrow[d,"\ev_{n+1} \times \ev_{n+2}"] \\
X \arrow[r,hook, "\Delta_X"] & X\times X.
\end{tikzcd}
\end{equation*}

We can now pull back the formula for the decomposition of the diagonal to obtain an expression for the pullback along $\tilde q$ and $\tilde \delta$.
Coupled with Theorem~13 in \cite{getzlerg2} this gives the Splitting Lemma.

Let \gls{tis} be a homogeneous basis of $H^*(X)$.
The intersection numbers $g_{ef}:=\int_X T_e\cup T_f$ form a matrix $(g_{ef})$.
We write $($\gls{gef}$)$ for the inverse matrix.

\begin{lemma}[Splitting Lemma]
	\label{splittinglemma}
	Let $\tilde q$ be the gluing map (\ref{gluemapmap}) for some given $\beta_1+\beta_2=\beta\in H_2(X)$, we have
\begin{multline*}
	\tilde q^*\left(\ev^*(\gamma_1\otimes\cdots\otimes\gamma_{n_1+n_2})\cap [\Mbstack_{g_1+g_2,n_1+n_2}(X,\beta)]^\mathrm{vir}\right)=\\
\sum_{e,f} g^{ef}
\left(\ev^*(\gamma_1\otimes\cdots\otimes\gamma_{n_1}\otimes T_e)\cap [\Mbstack_{g_1,n_1+1}(X,\beta_1)]^\mathrm{vir}\right)
\otimes \\
\left(\ev^*(T_f\otimes\gamma_{n_1+1}\otimes\cdots\gamma_{n_1+n_2})\cap [\Mbstack_{g_2,n_2+1}(X,\beta_2)]^\mathrm{vir}\right)
,\end{multline*}
and for the self-gluing $\tilde\delta$, we have
\begin{multline*}
	\tilde\delta^*\left(\ev^*(\gamma_1\otimes\cdots\otimes\gamma_n)\cap [\Mbstack_{g+1,n}(X,\beta)]^\mathrm{vir}\right)=\\
\sum_{e,f} g^{ef}
\ev^*(\gamma_1\otimes\cdots\otimes\gamma_n\otimes T_e\otimes T_f)\cap [\Mbstack_{g,n+2}(X,\beta)]^\mathrm{vir}
.\end{multline*}
\end{lemma}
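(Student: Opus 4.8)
The plan is to derive both identities from two ingredients: the Künneth decomposition of the diagonal class of $X$, and the compatibility of the virtual fundamental class with the gluing morphisms (Theorem~13 in \cite{getzlerg2}). The geometric content is carried entirely by the two fiber squares displayed just above the statement, which exhibit the gluing loci as preimages of the diagonal $\Delta_X\subset X\times X$ under a product of evaluation maps. The starting point is the decomposition of the diagonal coming from Poincaré duality,
\[
[\Delta_X]=\sum_{e,f} g^{ef}\,T_e\otimes T_f\in H^*(X\times X),
\]
where $(g^{ef})$ is the inverse of the intersection matrix $(g_{ef})$; this is the sole source both of the coefficients $g^{ef}$ and of the insertion of the basis classes $T_e,T_f$ at the two half-edges being glued.

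For the map $\tilde q$ I would argue as follows. In the first fiber square the right-hand vertical map is $\ev_{n_1+1}\times\ev_{n_1+2}$ and the bottom map is $\Delta_X$, so the fiber product $\Mbstack_{g_1,n_1+1}(X,\beta_1)\times_X\Mbstack_{g_2,n_2+1}(X,\beta_2)$ is the locus where the two glued markings map to a common point of $X$. Since refined Gysin pullback commutes with the maps in a fiber square, restricting a class to this locus is computed by the Gysin homomorphism $\Delta_X^!$, which by the displayed formula amounts to capping against $\sum_{e,f} g^{ef}\,T_e\otimes T_f$; concretely this inserts $T_e$ at marking $n_1+1$ of the first factor and $T_f$ at marking $n_1+2$ of the second, while the external evaluation classes $\gamma_1,\ldots,\gamma_{n_1+n_2}$ are transported unchanged. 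The one substantive point is that the class pulled back involves the \emph{virtual} class of the glued space, not its ordinary fundamental class; here I invoke Theorem~13 in \cite{getzlerg2}, which states precisely that $\tilde q$ carries the virtual class of $\Mbstack_{g_1+g_2,n_1+n_2}(X,\beta)$ to the Gysin-restricted product of the virtual classes of the two factors, summed over all splittings $\beta_1+\beta_2=\beta$. Assembling these facts gives the first formula.

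The self-gluing case $\tilde\delta$ runs in parallel. The second fiber square presents $\Mbstack^{\,\delta}_{g,n+2}(X,\beta)$ as the preimage of $\Delta_X$ under $\ev_{n+1}\times\ev_{n+2}$, so the same decomposition of the diagonal inserts $T_e$ and $T_f$ at the two new markings $n+1$ and $n+2$ of the single moduli factor, and Theorem~13 again supplies the needed compatibility of the virtual class under $\tilde\delta$.

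I expect the main obstacle to be the virtual-class compatibility itself. For the ordinary fundamental class the gluing loci are honest boundary divisors and the splitting is transparent, but $\Mbstack_{g,n}(X,\beta)$ is only virtually smooth, so one must know that the perfect obstruction theories on the glued space and on the fiber product of the pieces agree up to the excess coming from the conormal bundle of $\Delta_X$. This is exactly the content of Theorem~13 in \cite{getzlerg2} (Behrend's splitting axiom), so the real work lies in invoking that result correctly and in reconciling the sign and orientation conventions with the $\mathbb{Z}/2$-grading in the Künneth formula; supercommutativity of $H^*(X)$ means these signs must be tracked, although in the degrees relevant to our genus-$2$ applications they do not ultimately interfere.
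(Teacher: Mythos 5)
Your proposal is correct and follows essentially the same route as the paper: the paper's own (very brief) argument is precisely to pull back the Poincar\'e-duality decomposition of the diagonal $[\Delta_X]=\sum_{e,f}g^{ef}\,T_e\otimes T_f$ through the two displayed fiber squares and to invoke Theorem~13 in \cite{getzlerg2} for the compatibility of the virtual fundamental classes with the gluing maps $\tilde q$ and $\tilde\delta$. Your write-up simply makes explicit the Gysin-pullback mechanics and the role of the virtual-class axiom that the paper compresses into two sentences.
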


\begin{example}
	When we apply the Splitting Lemma to
	\[<	\left[	\begin{tikzpicture}[baseline,el/.style = {inner sep=2pt, align=left, sloped},every child node/.style={inner sep=1,font=\tiny}]
      \tikzstyle{level 1}=[counterclockwise from=315,level distance=9mm,sibling angle=90]
			\node (A0) [draw,circle,inner sep=1] {$\scriptstyle{0}$} child {node {3}} child {node {1}};
      \tikzstyle{level 1}=[counterclockwise from=180,level distance=9mm,sibling angle=0]
			\node (A1) [draw,circle,inner sep=1,left of=A1] {$\scriptstyle{3}$} child {node {2}};

			\path (A0) edge [bend left=20] (A1);
			\path (A0) edge [bend right=20] (A1);
	\end{tikzpicture}\right];\gamma_1,\gamma_2,\gamma_3>_\beta.\]
	after reordering the points and by contracting the upper edge first, we obtain
	\[(-1)^{|\gamma_2||\gamma_3|}\sum_{\beta_1+\beta_2=\beta}\sum_{e,f}\sum_{e',f'}<\gamma_1\gamma_3 T_{e'}T_e>_{0,\beta_1}	g^{ef}g^{e'f'}<T_fT_{f'}\gamma_2>_{3,\beta_2}.\]
	Note that the result does not depend on the choice of reordering or the choice of which edge to contract first.
\end{example}

When the degree $\beta$ is zero on a rational tail with three special points, the Splitting Lemma has a simple description:
\begin{lemma}
	\label{m03}
	Let $\beta\in H_2(X)$ and $\gamma_1,\ldots,\gamma_n\in H^*(X)$, we have
\begin{multline*}
	\sum_{e,f}\left<\gamma_1 \gamma_2 T_e\right>_{0,0}g^{ef}\left<\tau_{a_0}(T_f)\tau_{a_3}(\gamma_3)\cdots\tau_{a_n}(\gamma_n)\right>_{g,\beta}=\\
\left<\tau_{a_0}(\gamma_1\cup\gamma_2)\tau_{a_3}(\gamma_3)\cdots\tau_{a_n}(\gamma_n)\right>_{g,\beta}.
\end{multline*}
\end{lemma}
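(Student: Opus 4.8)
The plan is to reduce the identity to a single piece of linear algebra on $H^*(X)$ — the expansion of $\gamma_1\cup\gamma_2$ in the basis $T_0,\ldots,T_k$ via the dual basis determined by $(g^{ef})$ — together with the linearity of Gromov-Witten invariants in each insertion. The entire content is that the genus-zero degree-zero three-pointed invariant $\left<\gamma_1\gamma_2 T_e\right>_{0,0}$ computes the classical triple intersection product on $X$, after which the sum over $e,f$ simply reconstructs the cup product in the remaining insertion slot.

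First I would evaluate the factor $\left<\gamma_1\gamma_2 T_e\right>_{0,0}$. The moduli space $\Mbstack_{0,3}(X,0)$ of degree-zero stable maps from a three-pointed rational curve is canonically isomorphic to $X$: the source is the rigid curve $\Mbstack_{0,3}=\mathrm{pt}$, the map is constant, so the only datum is its image point. Under this identification all three evaluation maps $\ev_i$ become the identity, and the expected dimension is $\int_0 c_1(T_X)+(\dim X-3)+3=\dim X$, which equals the actual dimension of the smooth space $X$; hence the virtual class is the ordinary fundamental class $[X]$. Since all insertions carry $a_i=0$ there are no $\tilde\psi$-class contributions, and the definition of the invariant gives $\left<\gamma_1\gamma_2 T_e\right>_{0,0}=\int_X \gamma_1\cup\gamma_2\cup T_e$.

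Next I would use linearity of the invariant $\left<\tau_{a_0}(-)\,\tau_{a_3}(\gamma_3)\cdots\tau_{a_n}(\gamma_n)\right>_{g,\beta}$ in its first insertion to move the finite sum over $e,f$ inside the bracket, rewriting the left-hand side as $\left<\tau_{a_0}\!\bigl(\sum_{e,f}(\int_X\gamma_1\gamma_2 T_e)\,g^{ef}T_f\bigr)\tau_{a_3}(\gamma_3)\cdots\right>_{g,\beta}$. It then remains only to identify the inserted class. Writing $\gamma_1\cup\gamma_2=\sum_f c_f T_f$ and pairing against $T_e$ gives $\int_X\gamma_1\cup\gamma_2\cup T_e=\sum_f c_f\, g_{fe}$; inverting with $(g^{ef})$ yields $c_f=\sum_e(\int_X\gamma_1\gamma_2 T_e)\,g^{ef}$, so that $\sum_{e,f}(\int_X\gamma_1\gamma_2 T_e)\,g^{ef}T_f=\gamma_1\cup\gamma_2$. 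Substituting this into the inserted slot produces exactly the right-hand side.

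I do not expect a serious obstacle; this lemma is essentially a bookkeeping statement expressing that a contracted genus-zero rational tail carrying $\gamma_1,\gamma_2$ and a node gets absorbed by replacing those two insertions with their cup product. The only points requiring care are the identification of $\Mbstack_{0,3}(X,0)$ with $X$ and the claim that its virtual class is $[X]$ (which holds because in the degree-zero genus-zero three-pointed case the moduli space is smooth of the expected dimension), and keeping track of the Koszul signs from supercommutativity when $H^*(X)$ has odd-degree classes. The dual-basis computation is insensitive to these signs because $(g^{ef})$ is by definition the inverse of $(g_{ef})$, so the expansion of $\gamma_1\cup\gamma_2$ is reproduced verbatim.
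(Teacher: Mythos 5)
Your proposal is correct and follows essentially the same route as the paper's own proof: identify $\Mbstack_{0,3}(X,0)$ with $X$ so that $\left<\gamma_1\gamma_2 T_e\right>_{0,0}=\int_X\gamma_1\cup\gamma_2\cup T_e$, then use the dual-basis computation with $(g^{ef})$ and linearity in the insertion to recover $\gamma_1\cup\gamma_2$. The only difference is that you spell out why the virtual class of $\Mbstack_{0,3}(X,0)$ is the ordinary fundamental class $[X]$, a point the paper takes for granted.
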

\begin{proof}
Since $\Mbstack_{0,3}(X,0)=X$ we have
\[\left<\gamma_1\gamma_2 T_e\right>_{0,0}^X=\int_X \gamma_1\cup\gamma_2\cup T_e=\int_X\sum_i c_i T_i\cup T_e,\]
were we wrote $\gamma_1\cup\gamma_2$ in terms of the generating basis $T_0,\ldots,T_k$.
Now we obtain
\[
\sum_{i,e,f} \left(\int_X c_i T_i\cup T_e\right) g^{ef} T_f=
\sum_{i,e,f} c_i \cdot g_{ie} \cdot g^{ef} T_f=
\sum_{i,f}c_i\cdot\delta_{if} T_f=
	\gamma_1\cup\gamma_2.
\]
\end{proof}

\section{Reconstruction using symbols}
\label{section:symbol}
We develop the notion of a symbol of a tautological relation $L \in R_{g,n}$.
This is done by taking $<L;\gamma_1,\ldots,\gamma_n>_\beta$ and forgetting those terms that can be considered of a lower order in some specific sense.
In order to make this precise we give a new definition for formal Gromov-Witten invariants.

\begin{definition}
	An $n$-pointed \emph{formal Gromov-Witten invariant} of $X$,
\[\left<\tau_{k_1}(\gamma_1)\cdots\tau_{k_n}(\gamma_n)\right>_{g,\beta}^X,\]
	consists of the data $k_1,\ldots,k_n,g\in\mathbb{Z}_{\geq0}$, $\beta\in H_2(X)$, and $\gamma_1,\ldots,\gamma_n\in~H^*(X)$.
	Here the data
	\[\gamma_1,\ldots,\lambda\gamma_i,\gamma_{i+1},\ldots,\gamma_n\]
	is equivalent to
\[\gamma_1,\ldots,\gamma_i,(-1)^{|\gamma_i||\gamma_{i+1}|}\lambda\gamma_{i+1},\ldots,\gamma_n\]
	for any $\lambda\in\mathbb{Q}\setminus\{0\}$ and $1\leq i< n$.
	Denote the set of $n$-pointed formal Gromov-Witten invariants of $X$ by $\mathrm{GW}_n(X)$.
	We write $\mathrm{GW}(X):=\bigcup_n\mathrm{GW}_n(X)$.
\end{definition}
	\begin{definition}
		Let $\mathrm{G}_{g,\beta}^X\subset\mathbb{Q}[\mathrm{GW}(X)]$ be the $\mathbb{Q}$-vector space of linear combinations of formal degree~$\beta$ genus~g Gromov-Witten invariants of $X$.

		We have a realization map
		\[\text{\gls{eta}}:\mathbb{Q}[\mathrm{GW}(X)]\rightarrow\mathbb{Q}\]
		that sends a formal Gromov-Witten invariant to the value of its corresponding Gromov-Witten invariant.

		We now define a new grading on $\mathrm{G}_{g,\beta}^X$ where the degree of an invariant $\left<\tau_{k_1}(\gamma_1)\cdots\tau_{k_n}(\gamma_n)\right>_{g,\beta}^X$ is given by $n-\sum_i k_i$.
\end{definition}
Note that this new grading is different from the usual cohomological grading $\sum_i |\gamma_i|+k_i$.

\begin{definition}
	For $S\in S_{g,n}^r$, we define \gls{Sprim}, the \emph{primitive part} of $S$, by restricting $S$ to those terms that are strata classes for which we have
	\[n(v)-|\theta|=n-r,\]
	where $v$ is a vertex of genus $g$ and $\theta$ is the decoration at $v$.
\end{definition}
\begin{remark}
The \emph{primitive part} of $S$ consists exactly of those terms that are decorated strata classes with rational tail where every vertex besides a single genus $g$ vertex has exactly 3 special points and no decoration.
\begin{comment}
Taking the primitive part doe not commute with multiplication, i.e. in general $(S\cdot S')_\mathrm{prim}$ is not equal to $S_\mathrm{prim}\cdot S'_\mathrm{prim}$.
There is an alternative description using $R^*(\mathcal{C}_g^n)$ through wich we can obtain multiplicative properties:

The moduli space of curves with rational tail has a tautological ring $R^*(\Mstack^\mathrm{rt}_{g,n})$ obtained by restricting from $\Mbstack_{g,n}$.
Let $C_g\cong \Mstack_{g,1}$ be the universal curve over the moduli space of smooth curves $\Mstack_{g}$ and let $\mathcal{C}_{g}^n$ be its $n$-fold fiber product.
There is a map $p:\Mstack^\mathrm{rt}_{g,n}\rightarrow \mathcal{C}_{g}^n$ that contracts all the rational compontents of a curve.
We define the tautological ring of $\mathcal{C}_{g}^n$ by $R^*(\mathcal{C}_g^n)=p_*(R^*(\Mstack^\mathrm{rt}_{g,n}))$.
Let $x\in R^*(\mathcal{C}_g^n)$, the pullback $p^*(x)$ is given by the primitive part of a weighted sum of the strata in the inverse image $p^{-1}(x)$.

The pullback is a morphism of rings so it respects multiplication.
We do not have a thorough understands multiplication in $R^*(\mathcal{C}_g^n)$ one would be able to use
\end{comment}
\end{remark}

The map $<.\, ;\ldots >_\beta$ from Definition \ref{gwpbdefdef} factors through a map $<.\, ;\ldots >_\beta^\mathrm{form}$
\[
	\begin{tikzcd}[column sep=large]
		S_{g,n}^\mathrm{r}\otimes \left(H^*(X)\right)^{\otimes n}\arrow[bend left =25]{rr}{<.\, ;\ldots >_\beta}\arrow{r}{<.\, ;\ldots >_\beta^\mathrm{form}} & \mathbb{Q}[\mathrm{GW}(X)]\arrow{r}{\eta} &  \mathbb{Q}.
 \end{tikzcd}\]
\begin{definition}
	\label{symboldef}
	We fix the degree $\beta\in H_2(X)$ and we define a map
	\[\Sigma:S_{g,n}^r\otimes \left(H^*(X)\right)^{\otimes n}\rightarrow \mathrm{G}_{g,\beta}^X\]
	by taking \gls{sigma}, the \emph{symbol} of $S\in S_{g,n}^r$, to be the restriction of ${<S_\mathrm{prim};\gamma_1,\ldots,\gamma_n>_\beta^\mathrm{form}}$ to those terms for which the degree $\beta$ is concentrated in one genus $g$ Gromov-Witten invariant.
\end{definition}
By Lemma \ref{m03}, the map to $\mathrm{G}_{g,\beta}^X$ is well defined.
	The image of $S_{g,n}^r$ is homogeneous of degree $n-r$.
\begin{comment} we comment this since it is already in the intro atm
\begin{remark}
	Our definition of a symbol is a generalization of the definition of symbol for genus~1 in \cite{getzlerg1}.
\end{remark}
\end{comment}
\begin{notation}
	To save space we will not write the cup products for Gromov-Witten invariants in $\mathrm{G}_{g,\beta}^X$ (e.g. $<\gamma_1\gamma_2>=<\gamma_1\cup\gamma_2>$).
	To prevent the resulting ambiguity we also change the notation for invariants in $\mathrm{G}_{g,\beta}^X$ from \[<\tau_{a_1}(\gamma_1)\cdots\tau_{a_n}(\gamma_n)>_{g,\beta}\]
	to
	\[\text{\gls{symbolgwi}}.\]
	When the genus $g$ and degree $\beta$ are fixed we will leave them out.
\end{notation}

\begin{example}
	\label{symbolexample}
	Consider the linear combination of decorated strata classes in $S_{2,3}^2$
\[
	+3\left[	\begin{tikzpicture}[baseline,el/.style = {inner sep=2pt, align=left, sloped},every child node/.style={inner sep=1,font=\tiny}]
      \tikzstyle{level 1}=[counterclockwise from=315,level distance=9mm,sibling angle=90]
			\node (A0) [draw,circle,inner sep=1] at (0:1.3) {$\scriptstyle{0}$} child {node {1}} child {node {3}};
      \tikzstyle{level 1}=[counterclockwise from=180,level distance=9mm,sibling angle=0]
			\node (A1) [draw,circle,inner sep=1] at (0:0) {$\scriptstyle{2}$} child {node {2}};

      \path (A0) edge [bend left=0.000000] node[el,below,font=\tiny,pos=.8] {$\psi$} (A1);
	\end{tikzpicture}\right]
	+\left[	\begin{tikzpicture}[baseline,el/.style = {inner sep=2pt, align=left, sloped},every child node/.style={inner sep=1,font=\tiny}]
      \tikzstyle{level 1}=[clockwise from=45,level distance=9mm,sibling angle=45]
			\node (A0) [draw,circle,inner sep=1] at (0:1.3) {$\scriptstyle{0}$} child {node {1}} child {node {2}} child {node {3}};
      \tikzstyle{level 1}=[counterclockwise from=180,level distance=9mm,sibling angle=0]
			\node (A1) [draw,circle,inner sep=1] at (0:0) {$\scriptstyle{2}$};

      \path (A0) edge [bend left=0.000000] node[el,below,font=\tiny,pos=.8] {$\psi$} (A1);
	\end{tikzpicture}\right]
	-\left[	\begin{tikzpicture}[baseline,el/.style = {inner sep=2pt, align=left, sloped},every child node/.style={inner sep=1,font=\tiny}]
      \tikzstyle{level 1}=[counterclockwise from=315,level distance=9mm,sibling angle=90]
			\node (A0) [draw,circle,inner sep=1] at (0:1.3) {$\scriptstyle{0}$} child {node {2}} child {node {3}};
      \tikzstyle{level 1}=[counterclockwise from=270,level distance=9mm,sibling angle=120]
      \node (A1) [draw,circle,inner sep=1] at (0:0) {$\scriptstyle{0}$} child {node {1}};
      \tikzstyle{level 1}=[counterclockwise from=240,level distance=9mm,sibling angle=0]
			\node (A2) [draw,circle,inner sep=1] at (0:-1.3) {$\scriptstyle{2}$};

      \path (A0) edge [bend left=0.000000] (A1);
      \path (A1) edge [bend left=0.000000] (A2);
	\end{tikzpicture}\right]
	-2\left[	\begin{tikzpicture}[baseline,el/.style = {inner sep=2pt, align=left, sloped},every child node/.style={inner sep=1,font=\tiny}]
      \tikzstyle{level 1}=[counterclockwise from=315,level distance=9mm,sibling angle=90]
			\node (A0) [draw,circle,inner sep=1] at (0:1.3) {$\scriptstyle{1}$} child {node {1}} child {node {3}};
      \tikzstyle{level 1}=[counterclockwise from=270,level distance=9mm,sibling angle=120]
      \node (A1) [draw,circle,inner sep=1] at (0:0) {$\scriptstyle{0}$} child {node {2}};
      \tikzstyle{level 1}=[counterclockwise from=240,level distance=9mm,sibling angle=0]
			\node (A2) [draw,circle,inner sep=1] at (0:-1.3) {$\scriptstyle{1}$};

      \path (A0) edge [bend left=0.000000] (A1);
      \path (A1) edge [bend left=0.000000] (A2);
	\end{tikzpicture}\right]
\]
	The primitive part consists of the first and third term. Its symbol is
	\[	(-1)^{|\gamma_2||\gamma_3|}\cdot3\left<\psi\gamma_1\gamma_3,\gamma_2\right>-\left<\gamma_1 \gamma_2 \gamma_3\right>.\]
\end{example}

\begin{definition}
	We introduce an equivalence relation on $\mathrm{G}_{g,\beta}^X$ as follows.
	Let $x\in \mathrm{G}_{g,\beta}^X$ be homogeneous of degree $d$.
	We say $x\text{\gls{equiv}} 0$ if there is a $P\in\mathbb{Q}[\mathrm{GW}(X)]$ such that $\eta(x-P)=0$ and $P$ is a polynomial in formal Gromov-Witten invariants
\[<\tau_{a_1}(\gamma_1)\cdots\tau_{a_m}(\gamma_m)>_{g',\beta'}\]
for which $g'\leq g$, $\beta'\leq \beta$, or $m-\sum_i a_i\leq d$, and at least one of these three inequalities is not an equality.

(For $\beta,\beta'\in H_2(X)$ we say that $\beta'<\beta$ if there exists a $\beta''\in H_2(X)$ such that $\beta''$ is the image under the cycle map of a nonzero effective $1$-cycle and $\beta'+\beta''=\beta$.)
\end{definition}
By construction, if $R\in R_{g,n}^r$, then $\Sigma(R)(\gamma_1,\ldots,\gamma_n)\sim0$.

We can now state a combined version of the String and Divisor Equations (Proposition 12 in \cite{getzlerg2}) for symbols:
\begin{lemma}[]
	\label{genstring}
	Let $a\in H^*(X)$ with $|a|\leq 2$ then
	\[\left<\psi^{k_1}\gamma_1,\ldots\psi^{k_n}\gamma_n,a\right>_{g,\beta}\sim	\sum_{i=1}^n \left<\psi^{k_1}\gamma_1,\ldots,\psi^{k_i-1}\gamma_i a,\ldots,\psi^{k_n}\gamma_n\right>_{g,\beta},\]
{\small(Here it is understood that the terms of the sum where $k_i-1$ becomes negative are zero.)}
	except when $\beta=0$ and $(g,n)\in\{(0,2),(1,0)\}$, in which case the only (possibly) nonzero invariants are
\begin{align*}
	\left<\gamma_1\gamma_2\gamma_3\right>_{0,0}^X&=\int_X \gamma_1\cup\gamma_2\cup\gamma_3, \\
	\left<\gamma\right>_{1,0}^X&=-\frac{1}{24}\int_X c_{\mathrm{dim}(X)-1}(X)\cup\gamma.
\end{align*}
\end{lemma}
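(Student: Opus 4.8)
The plan is to obtain the congruence from the honest String and Divisor Equations (Proposition~12 in \cite{getzlerg2}) and then to check that the only term by which those equations differ from the asserted identity is of strictly lower order for the grading on $\mathrm{G}_{g,\beta}^X$, so that it dies under $\sim$. Since $a$ is homogeneous with $|a|\le 2$, I would split into the cases $|a|=0$ and $|a|\in\{1,2\}$. If $|a|=0$ then $a$ is a scalar multiple of the identity, the String Equation applies, and pulling the scalar through the cup products turns it into exactly the right-hand side of the Lemma, so the two sides already agree after applying $\eta$. If $|a|\in\{1,2\}$ the Divisor Equation applies and produces, besides the sum $\sum_i\langle\ldots\psi^{k_i-1}(\gamma_i a)\ldots\rangle_{g,\beta}$, a single extra term $(\int_\beta a)\,\langle\psi^{k_1}\gamma_1,\ldots,\psi^{k_n}\gamma_n\rangle_{g,\beta}$; note that for $|a|=1$ the coefficient $\int_\beta a$ vanishes by degree reasons, so this term is genuinely present only when $a$ is a divisor.

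The key step is to see that this extra term is lower order. Write $d=(n+1)-\sum_i k_i$ for the degree of the left-hand side. The extra term carries $n$ marked points and the same total $\psi$-exponent $\sum_i k_i$, hence has degree $n-\sum_i k_i=d-1$, while keeping the same genus $g$ and the same class $\beta$. It is therefore a monomial in formal invariants with $g'\le g$, $\beta'\le\beta$ and $m-\sum_j a_j\le d$, the last inequality being strict, so by the definition of the equivalence relation it is $\sim 0$. Consequently the difference of the two sides of the Lemma is $\sim 0$, which is the claim.

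It then remains to treat the exceptional cases. These are exactly the $(g,\beta)$ for which the marked point carrying $a$ cannot be forgotten, i.e. where the target $\Mbstack_{g,n}(X,\beta)$ of the forgetful map is unstable although the source $\Mbstack_{g,n+1}(X,\beta)$ is not; for $\beta=0$ this means $2g-2+n\le 0<2g-2+(n+1)$, which singles out $(g,n)\in\{(0,2),(1,0)\}$ (the remaining cases $(0,0)$ and $(0,1)$ give invariants that simply vanish). In these two cases the forgetful argument is unavailable, and I would instead evaluate the invariants directly: $\Mbstack_{0,3}(X,0)\cong X$ (as already used in the proof of Lemma~\ref{m03}) forces $\langle\gamma_1\gamma_2\gamma_3\rangle_{0,0}=\int_X\gamma_1\cup\gamma_2\cup\gamma_3$, with no surviving $\psi$-classes because $\Mbstack_{0,3}$ is a point, while the genus-one one-pointed degree-zero invariant is read off from the virtual class of $\Mbstack_{1,1}(X,0)$, giving $\langle\gamma\rangle_{1,0}=-\tfrac{1}{24}\int_X c_{\dim(X)-1}(X)\cup\gamma$.

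The main obstacle I anticipate is not the String/Divisor input but the uniform bookkeeping around it: verifying that the divisor term drops the new degree by exactly one in every case, checking that the sign conventions coming from supercommutativity for an odd insertion $a$ are consistent with how the formal invariants in $\mathrm{G}_{g,\beta}^X$ are identified, and supplying the self-contained evaluation of the genus-one degree-zero invariant, which depends on the explicit description of $[\Mbstack_{1,1}(X,0)]^{\mathrm{vir}}$ rather than on a naive dimension count.
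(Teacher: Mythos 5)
Your proposal is correct and takes essentially the same route as the paper: the paper states this lemma as a combined version of Proposition~12 in \cite{getzlerg2} adapted to the symbol formalism, leaving implicit precisely the bookkeeping you supply, namely that the divisor term $\left(\int_\beta a\right)\left<\psi^{k_1}\gamma_1,\ldots,\psi^{k_n}\gamma_n\right>_{g,\beta}$ has the same $g$ and $\beta$ but degree $d-1$ in the new grading, hence is absorbed by $\sim$. Your identification of the unstable cases $(g,n)\in\{(0,2),(1,0)\}$ for $\beta=0$ and their direct evaluation via $\Mbstack_{0,3}(X,0)\cong X$ and the obstruction-bundle computation on $\Mbstack_{1,1}(X,0)\cong\Mbstack_{1,1}\times X$ agrees with the values stated in the lemma.
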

Similarly the Dilaton Equation (\RN{6}.5.3 in \cite{maninbookfrobenius}) becomes:
\begin{lemma}
	\label{dilaton}
	We have
	\[\left<\psi^{a_1}\gamma_1,\ldots,\psi^{a_n}\gamma_n,\psi\right>_{g,\beta}\sim(2g-2+n)\left<\psi^{a_1}\gamma_1,\ldots,\psi^{a_n}\gamma_n\right>_{g,\beta},\]
	except when $\beta=0$, $g=1$ and $n=0$, in which case we have
	\[\left<\psi\gamma\right>_{1,0}^X=-\frac{1}{24}\int_X c_{\mathrm{dim}(X)}(X)\cup\gamma.\]
\end{lemma}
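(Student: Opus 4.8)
The plan is to obtain the $\sim$-relation directly from the classical Dilaton Equation, which holds as an honest equality of Gromov-Witten invariants in the stable range, and to settle the unstable case $\beta=0$, $g=1$, $n=0$ by an explicit integral on $\Mbstack_{1,1}(X,0)$. First I would recall the usual derivation of the Dilaton Equation, which I will use whenever $2g-2+n>0$. Writing $\pi=\pi_{n+1}$ for the map forgetting the dilaton point, the extra insertion is $\ev_{n+1}^*(1)=1$, and Lemma~\ref{psipullback} gives $\tilde\psi_i=\pi^*\tilde\psi_i+D_i$ for $i\le n$. On each boundary divisor $D_i$ the class $\tilde\psi_{n+1}$ restricts to the $\psi$-class of a three-pointed rational component and hence vanishes, so $\tilde\psi_{n+1}\cdot D_i=0$ and every correction term is killed. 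The integrand therefore collapses to $\tilde\psi_{n+1}\cdot\pi^*\big(\prod_i\tilde\psi_i^{a_i}\,\ev_i^*\gamma_i\big)$, and the projection formula together with $\pi_*(\tilde\psi_{n+1}\cap[\,\cdot\,]^{\mathrm{vir}})=\kappa_0\cap[\,\cdot\,]^{\mathrm{vir}}=(2g-2+n)[\,\cdot\,]^{\mathrm{vir}}$ produces the factor $2g-2+n$.

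To pass to symbols, I would observe that both sides of the claimed relation have the same new grading $d=(n+1)-(\sum_i a_i+1)=n-\sum_i a_i$, and that the equality just derived has no lower-order correction terms. Applying the realization map $\eta$ gives $\eta\big(\langle\cdots,\psi\rangle_{g,\beta}-(2g-2+n)\langle\cdots\rangle_{g,\beta}\big)=0$, so taking $P=0$ in the definition of $\sim$ already yields the asserted equivalence. For $\beta\neq0$ the moduli space of maps is stable for every $n\ge0$, so this covers all such cases at once.

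It then remains to inspect the degenerate $\beta=0$ cases, where the target $\Mbstack_{g,n}(X,0)$ of $\pi$ can be unstable. For $(g,n)\in\{(0,0),(0,1)\}$ the left-hand invariant is itself unstable and there is nothing to prove; for $(0,2)$ the coefficient $2g-2+n$ vanishes and the left-hand side vanishes too, since the $\psi$-classes on $\Mbstack_{0,3}(X,0)\cong X$ are zero, so $0\sim0$. The only genuine exception is $(g,n)=(1,0)$, where the coefficient vanishes but $\langle\psi\gamma\rangle_{1,0}$ need not. Here I would compute directly on $\Mbstack_{1,1}(X,0)\cong\Mbstack_{1,1}\times X$, whose virtual class is the Euler class of the obstruction bundle $\mathbb{E}^\vee\boxtimes T_X$ with $\mathbb{E}$ the Hodge bundle. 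Writing $\tilde\psi_1=\mathrm{pr}_1^*\psi_1$ and $\ev_1^*\gamma=\mathrm{pr}_2^*\gamma$ and expanding $e(\mathbb{E}^\vee\boxtimes T_X)=\sum_i(-\lambda_1)^{\mathrm{dim}(X)-i}c_i(T_X)$, the integral splits over the two factors, and only the summand $i=\mathrm{dim}(X)$ has the right codimension on $\Mbstack_{1,1}$; using $\int_{\Mbstack_{1,1}}\psi_1=\tfrac1{24}$ this collapses to $\tfrac1{24}\int_X c_{\mathrm{dim}(X)}(X)\cup\gamma$, up to the overall sign supplied by the obstruction theory.

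I expect the main obstacle to be the sign bookkeeping in this last computation: determining the orientation of the obstruction theory and the sign of the surviving Hodge contribution so that it reproduces the sign in the statement. A useful cross-check is the string exceptional value $\langle\gamma\rangle_{1,0}=-\tfrac1{24}\int_X c_{\mathrm{dim}(X)-1}(X)\cup\gamma$ of Lemma~\ref{genstring}, which arises from the neighbouring summand $i=\mathrm{dim}(X)-1$ of the very same Euler class (contributing the factor $-\lambda_1$, hence the minus sign); comparing the two summands fixes the conventions. A secondary point requiring care is to delineate precisely which triples $(g,n,\beta)$ make the forgetful map and its target legitimate, so that the honest equality — rather than only the weaker $\sim$-relation — is invoked exactly where it applies.
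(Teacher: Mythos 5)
The paper gives no proof of this lemma at all: it is presented as the symbol-level recasting of the classical Dilaton Equation, with a citation to \RN{6}.5.3 of Manin's book, and the exceptional value is simply recorded. Your proposal supplies exactly the proof that the paper outsources, and it is correct in structure: the stable-range argument (comparison of $\tilde\psi_i$ under the forgetful map, vanishing of $\tilde\psi_{n+1}$ on the correction divisors $D_i$, projection formula plus $\pi_*\tilde\psi_{n+1}=(2g-2+n)$ and compatibility of virtual classes with $\pi$) is the standard derivation; the passage to $\sim$ via $P=0$ is right, since both sides are homogeneous of the same new degree $n-\sum_i a_i$; and your enumeration of the unstable $\beta=0$ cases $(0,0),(0,1),(0,2),(1,0)$ is complete. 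One small misattribution: Lemma~\ref{psipullback} compares $F^*(\psi_i)$ with $\tilde\psi_i$ for the map $F$ to $\Mbstack_{g,n}$, whereas what you use is the analogous (also standard) comparison $\tilde\psi_i=\pi_{n+1}^*\tilde\psi_i+D_i$ for the forgetful map between moduli of \emph{maps}; cite it as such, since it is not literally the paper's lemma.

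The one substantive issue is your sign hedge at the end, which is misplaced \textemdash{} and resolving it honestly shows the sign in the stated lemma is itself erroneous. With the paper's own convention $[\Mbstack_{1,1}(X,0)]^{\mathrm{vir}}=e(\mathbb{E}^\vee\boxtimes TX)\cap[\Mbstack_{1,1}\times X]$ (formula~(\ref{virtualclasscomputation}) in genus one), the expansion $e(\mathbb{E}^\vee\boxtimes TX)=\sum_i(-\lambda_1)^{\dim X-i}c_i(TX)$ is forced, and it simultaneously produces the string value $-\frac{1}{24}\int_X c_{\dim X-1}(X)\cup\gamma$ of Lemma~\ref{genstring} (from the $i=\dim X-1$ summand, coefficient $-\lambda_1$) and the dilaton value $+\frac{1}{24}\int_X c_{\dim X}(X)\cup\gamma$ (from the $i=\dim X$ summand, coefficient $+1$). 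No orientation bookkeeping can flip the latter sign without also flipping the former, which your own cross-check pins down. The plus sign is indeed the correct one: for $|\gamma|>0$ both expressions vanish for degree reasons, and for $\gamma\in H^0(X)$ one gets $\left<\psi\right>_{1,0}=\chi(X)/24$, the well-known dilaton anomaly (test $X=\mathrm{point}$: $\int_{\Mbstack_{1,1}}\psi_1=+\frac{1}{24}$). So you should commit to $+\frac{1}{24}\int_X c_{\dim X}(X)\cup\gamma$ and flag the minus sign in the lemma as a typo, rather than defer to the statement; as written, your proof plan would be chasing a sign that cannot be produced consistently with the paper's other conventions.
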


\begin{definition}
	\label{pbsymbdef}
	Let $S\in S_{g,n}^r$, by abuse of notation we define the \emph{pullback of $\Sigma(S)(\gamma_1,\ldots,\gamma_n)$ with cohomology class $\gamma_{n+1}$} to be
	\[\Sigma(\pi^*(S))(\gamma_1,\ldots,\gamma_n,\gamma_{n+1}).\]
\end{definition}

By the String Equation (see 2.59 in \cite{harrismorrison}), the pullback of $\left<\psi^{a_1}\gamma_1,\ldots,\psi^{a_n}\gamma_n\right>$ with class $\psi_{n+1}$ equals
\[\left<\psi^{a_1}\gamma_1,\ldots,\psi^{a_n}\gamma_n,\gamma_{n+1}\right>-\sum_{i=1}^n \left<\psi^{a_1}\gamma_1,\ldots,\psi^{a_i-1}\gamma_{i}\gamma_{n+1},\ldots,\psi^{a_n}\gamma_n\right>.\]

For the purpose of this paper we say a Gromov-Witten invariant can be \emph{reconstructed} from a set of Gromov-Witten invariants $S$ if it can be calculated recursively by taking the set $S$ as initial values and applying the following relations:
	\begin{itemize}
		\item $<R;\gamma_1,\ldots,\gamma_n>_\beta$ for any $R\in R_{g,n}$, $\beta\in H_2(X)$, and $\gamma_1,\ldots,\gamma_n\in H^*(X)$,
		\item The string, divisor, and dilaton equations and their special cases (See Lemmas~\ref{genstring}, \ref{dilaton}).
	\end{itemize}
	Our way to prove that $GW(X)$ can be reconstructed from $S$ is to show that $w\sim 0$ for all $w\in\mathrm{GW}(X)\setminus S$.

\section{Reconstruction for genus~$0$}
We will write down the proof of the first reconstruction theorem in \cite{kontmaninrecon} using the language of symbols.
Unlike the original statement we will also allow for odd cohomology.

\begin{theorem}
	\label{g0reconstruction}
	If $H^*(X)$ is generated as a ring by $H^{\leq2}(X)$ then all genus zero Gromov-Witten invariants can be reconstructed recursively from primitive invariants with at most 2 points.
\end{theorem}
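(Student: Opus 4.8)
The plan is to recast the Kontsevich--Manin argument \cite{kontmaninrecon} in the symbol formalism, the single input relation being the WDVV relation $[D(12|34)]-[D(13|24)]\in R^1(\overline{\mathcal{M}}_{0,4})$. I would organise everything as two successive reductions: first eliminate all $\psi$-classes, and then reduce the number of marked points to at most two. For the first step, in genus zero every $\psi$-class is a boundary class by Proposition~\ref{psivanishing}; passing the resulting tautological relation through the Splitting Lemma~\ref{splittinglemma} and Lemma~\ref{m03} expresses any descendant invariant in terms of invariants of strictly smaller total $\psi$-degree, so a downward induction on $\sum_i k_i$ reduces everything to primary invariants. (The string and dilaton equations, Lemmas~\ref{genstring} and~\ref{dilaton}, serve the same purpose in many cases.) From here on I treat primary invariants only.

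For the point reduction I would prove, by well-founded induction, that every primary genus-zero invariant with at least three markings is $\sim$ a combination of invariants with fewer points. The easy case is when some insertion $\gamma_i$ has $|\gamma_i|\le 2$: for a primary invariant every term on the right-hand side of Lemma~\ref{genstring} involves $\psi^{-1}$ and hence vanishes, so $\langle\gamma_1,\ldots,\gamma_n\rangle\sim 0$. This is exactly the divisor/fundamental-class axiom seen at top order, and it exhibits the invariant as determined by $(n-1)$-pointed data. It therefore remains to treat invariants all of whose insertions satisfy $|\gamma_i|\ge 3$, and this is where the hypothesis that $H^*(X)$ is generated by $H^{\le 2}(X)$ enters: each such $\gamma_i$ is a $\mathbb{Q}$-linear combination of cup products $\gamma'\cup\gamma''$ of classes of strictly smaller positive degree, so by multilinearity I may assume one insertion is a single product $\gamma_1=\gamma'\cup\gamma''$ with $\gamma'$ a generator of degree $\le 2$.

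The engine is the symbol of the pulled-back WDVV relation. Representing the product insertion $\gamma_1=\gamma'\cup\gamma''$ by two markings carrying $\gamma'$ and $\gamma''$, I pull $[D(\gamma'\gamma''\mid\gamma_2\gamma_3)]-[D(\gamma'\gamma_2\mid\gamma''\gamma_3)]$ back along the forgetful map to $\overline{\mathcal{M}}_{0,n+1}$ via Lemma~\ref{pullbackcurves}(iii) and compute its symbol through Definition~\ref{symboldef}: the primitive part keeps only those boundary terms whose rational tail carries exactly two markings, and concentrating $\beta$ on the main genus-zero vertex forces that tail to have degree zero, so Lemma~\ref{m03} merges the two tail insertions by cup product. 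This yields the four-term relation among $n$-pointed invariants
\[
\langle\gamma'\gamma'',\gamma_2,\ldots,\gamma_n\rangle+\langle\gamma',\gamma'',\gamma_2\gamma_3,\gamma_4,\ldots,\gamma_n\rangle\sim\langle\gamma'\gamma_2,\gamma'',\gamma_3,\ldots,\gamma_n\rangle+\langle\gamma',\gamma_2,\gamma''\gamma_3,\gamma_4,\ldots,\gamma_n\rangle,
\]
valid up to the Koszul signs produced by supercommutativity of odd classes. The first term is the target, which I solve for. Of the three remaining terms, two carry the generator $\gamma'$ as a \emph{separate} insertion of degree $\le 2$ and are therefore $\sim 0$ by the easy case, reducing to fewer points; the one leftover term $\langle\gamma'\gamma_2,\gamma'',\gamma_3,\ldots,\gamma_n\rangle$ has instead shifted $\gamma'$ onto a neighbouring insertion, leaving $\gamma''$ (of degree $|\gamma_1|-|\gamma'|$) separate.

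The main obstacle is the termination of this recursion, i.e.\ controlling the leftover term. When $|\gamma''|\le 2$ (in particular whenever $|\gamma_1|\le 4$) it too falls under the easy case and the target is immediately reconstructed. In general the leftover merely \emph{redistributes} cohomological degree between two insertions, while the total degree is pinned down by the dimension constraint for a nonzero invariant; so the set of relevant invariants, for fixed $g=0$, $\beta$ and $n$, is finite, and the WDVV symbols furnish a finite linear system over them. I would finish by fixing a total order refining the grading $d=n-\sum_i k_i$, as in \cite{kontmaninrecon}, under which each non-base invariant is the maximal term of such a symbol relation, so that every invariant is reconstructed by descent down to the $\le 2$-pointed ones. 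The delicate configurations are those with several insertions of equal (high) degree, where the naive complexity fails to decrease monotonically; resolving these is precisely the content of the Kontsevich--Manin bookkeeping, here reorganised through the symbol. The extension to odd cohomology costs only the systematic tracking of Koszul signs and the use of Lemma~\ref{genstring} in the range $|a|\le 2$, which already covers odd degree-one classes.
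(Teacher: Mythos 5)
Your setup---reduce to primary invariants via Proposition~\ref{psivanishing}, kill any invariant having a separate insertion of degree $\leq2$ via Lemma~\ref{genstring}, and use the generation hypothesis to decompose an insertion $\gamma_1=\gamma'\cup\gamma''$ to feed into the symbol of WDVV---is exactly the strategy of the paper, and your four-term symbol relation is correct. The genuine gap is the final step: you never control the leftover term $\langle\gamma'\gamma_2,\gamma'',\gamma_3,\ldots,\gamma_n\rangle$, deferring it to ``a total order refining the grading $d=n-\sum_i k_i$'' plus unspecified ``Kontsevich--Manin bookkeeping.'' This cannot work as stated: for primary invariants all $k_i=0$, so that grading assigns the same value $n$ to every $n$-pointed primary invariant and distinguishes nothing; the total order, and the claim that every non-base invariant appears as the maximal term of some symbol relation, are precisely what is missing and are never constructed.

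The difficulty you flag (``several insertions of equal high degree, where the naive complexity fails to decrease monotonically'') is illusory, and the paper dispatches it with a one-line induction: induct on the degree of the \emph{single} insertion being decomposed, with the inductive claim quantified over arbitrary values of all other insertions. Concretely, claim that $\left<a,b,c\right>\sim0$ for all $a,b$ whenever $|c|\leq k$. The base case $|c|\leq2$ is your easy case; in the inductive step write $c=c'd$ with $0<|d|\leq2$, and the symbol of WDVV gives, after your two terms with $d$ separate are killed, $0\sim\left<a,b,c'd\right>-(-1)^{|b||c'|}\left<a,c',bd\right>$. The surviving leftover, reordered as $\pm\left<a,bd,c'\right>$, has its distinguished slot of degree $|c'|=|c|-|d|<|c|$; the fact that the other slot grew from $b$ to $bd$ is irrelevant, because the induction hypothesis quantifies over it. So the recursion terminates with no finiteness argument, no linear system, and no global bookkeeping. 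Note also that the paper runs this induction only for $3$-pointed invariants and then pulls the resulting equivalence $\left<a,b,c\right>\sim0$ back, inserting $\gamma_4,\ldots,\gamma_n$ (Definition~\ref{pbsymbdef}); your variant directly at $n$ points also closes up under the same induction, but establishing the $3$-point case first makes the bookkeeping trivial.
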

\begin{proof}
	By Proposition \ref{psivanishing} we only need to consider primitive invariants.
Let us have a 3 pointed Gromov-Witten invariant $<a,b,c>$, with $a,b,c\in H^{*}(X)$.
We claim that
\begin{equation}
	\label{g0re1}
0\sim<a,b,c>.
\end{equation}
We construct symbols from relations and by using Lemmas \ref{dilaton} and \ref{genstring}.
The equations in these Lemmas still hold when pulled back, so we can pull back (\ref{g0re1}) and insert cohomology classes $\gamma_4,\ldots,\gamma_n$ to obtain
\[0\sim<a,b,c,\gamma_4,\ldots,\gamma_n>,\]
which proves the theorem.

To prove the claim we use induction on $|c|$.
If $|c|\leq 2$ then we apply Lemma~\ref{genstring}.
If $|c|>2$, then using the hypothesis on $X$, we can write $c=c'd$ with $0<|d|\leq 2$.
We now take the tautological relation $L_{0,4}\in R^1_{0,4}$ known as the WDVV relation:
\[\left[\begin{tikzpicture}[baseline,el/.style = {inner sep=2pt, align=left, sloped},every child node/.style={inner sep=1,font=\tiny}]
      \tikzstyle{level 1}=[counterclockwise from=315,level distance=9mm,sibling angle=90]
			\node (A0) [draw,circle,inner sep=1] at (0:1.3) {$\scriptstyle{0}$} child {node {4}} child {node {3}};
      \tikzstyle{level 1}=[counterclockwise from=135,level distance=9mm,sibling angle=90]
			\node (A1) [draw,circle,inner sep=1] at (0:0) {$\scriptstyle{0}$} child {node {2}} child {node {1}};

      \path (A0) edge (A1);
\end{tikzpicture}\right]
-\left[\begin{tikzpicture}[baseline,el/.style = {inner sep=2pt, align=left, sloped},every child node/.style={inner sep=1,font=\tiny}]
      \tikzstyle{level 1}=[counterclockwise from=315,level distance=9mm,sibling angle=90]
			\node (A0) [draw,circle,inner sep=1] at (0:1.3) {$\scriptstyle{0}$} child {node {4}} child {node {2}};
      \tikzstyle{level 1}=[counterclockwise from=135,level distance=9mm,sibling angle=90]
			\node (A1) [draw,circle,inner sep=1] at (0:0) {$\scriptstyle{0}$} child {node {3}} child {node {1}};

      \path (A0) edge (A1);
	\end{tikzpicture}\right]
=0
\]
We take its symbol $\Sigma(S)(a,b,c',d)$ to obtain
\[0\sim<a,b,c'd>+<ab,c',d>-(-1)^{|b||c'|}(<a,c',bd>+<ac',b,d>.\]
Using Lemma~\ref{genstring} this becomes
\[0\sim<a,b,c'd>-(-1)^{|b||c'|}<a,c',bd>.\]
Since $|c'|<|c|$ this ends our induction step.
\end{proof}
\section{Reconstruction for genus~$1$}
In \cite{getzlerg1} Getzler found a new tautological relation in $R^2(\Mbstack_{1,4})$ and used it to prove the following reconstruction theorem for genus~$1$.
There is a step missing in the proof, namely in Corollary~3.3 of \cite{getzlerg1} the initial conditions for the difference equation are missing.
So in this section we will give our own proof of the theorem.
This proof uses the same type of methods as are used by Getzler.

We fix an ample divisor $\omega\in H^2(X)$ and define the primitive cohomology of $X$ to be
\[P^i(X):=\mathrm{coker}(H^{i-2}(X,\mathbb{C})\xrightarrow{\cdot\cup\omega}H^{i}(X,\mathbb{C})).\]

\begin{theorem}
	\label{g1reconstruction}
	If $P^i(X)=0$ for $i>2$, then all genus one Gromov-Witten invariants can be reconstructed recursively from primary genus~$1$ invariants with at most 1 point and all genus zero invariants.
\end{theorem}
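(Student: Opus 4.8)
The plan is to mirror the genus~$0$ argument of Theorem~\ref{g0reconstruction}, replacing the WDVV relation by Getzler's relation in $R^2(\Mbstack_{1,4})$ and replacing the hypothesis ``$H^*(X)$ generated by $H^{\le 2}$'' by the primitivity hypothesis $P^i(X)=0$ for $i>2$. The point of the latter is that $P^i(X)=0$ for $i>2$ means $\cup\,\omega\colon H^{i-2}(X)\to H^i(X)$ is surjective for $i>2$, so every class $c$ with $|c|>2$ can be written as $c=\omega\cup c'$ with $|c'|=|c|-2$; the ample divisor $\omega$ thus plays exactly the role of the degree~$\le 2$ factor $d$ from the genus~$0$ proof, and drives an induction on cohomological degree. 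As before, it suffices to show $w\sim 0$ for every genus~$1$ invariant $w$ that is neither a genus~$0$ invariant (all of which are given) nor a one-pointed primary genus~$1$ invariant.

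First I would eliminate the $\psi$-classes. Since $\max(1,1)=1$, Proposition~\ref{psivanishing} rewrites any $\psi$-monomial of positive degree on $\Mbstack_{1,n}$ as a boundary class carrying $\psi$-decoration of strictly smaller degree and no $\kappa$'s. Pairing this relation through $\langle\cdot\,;\ldots\rangle_\beta$ and applying the Splitting Lemma, the self-gluing ($\delta_{\mathrm{irr}}$-type) boundary terms become genus~$0$ invariants, while the rational-tail terms become genus~$1$ invariants of strictly smaller $\psi$-degree, simplified by Lemma~\ref{m03}. Inducting on the total $\psi$-degree (and using Lemmas~\ref{genstring} and~\ref{dilaton} for the cases they cover) this reduces every descendant genus~$1$ invariant to primary ones modulo~$\sim$. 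It remains to treat primary invariants $\langle\gamma_1,\ldots,\gamma_n\rangle_{1,\beta}$.

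The core step is the two-pointed claim $\langle a,b\rangle_{1,\beta}\sim 0$, proved by induction on $|b|$. If $|b|\le 2$, Lemma~\ref{genstring} applied to the last entry gives $\langle a,b\rangle_{1,\beta}\sim 0$ directly, the string and divisor contributions being of strictly lower grading and the $\beta=0$, one-pointed cases being initial data. If $|b|>2$, I would write $b=\omega\,b'$ and feed suitable classes into the symbol of Getzler's relation $L_{1,4}\in R^2_{1,4}$, giving $\Sigma(L_{1,4})(\gamma_1,\gamma_2,\gamma_3,\gamma_4)\sim 0$. Its symbol is homogeneous of grading $4-2=2$, and the strata in its primitive part have a genus~$1$ vertex whose special points are either legs with a single $\psi$ or half-edges to rational tails; after contracting the tails with Lemma~\ref{m03} and eliminating the residual $\psi$-classes as in the previous step, every term becomes a two-pointed primary genus~$1$ invariant $\langle\gamma_i\gamma_j,\gamma_k\gamma_l\rangle_{1,\beta}$ up to lower-order terms. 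Solving for the leading term then expresses $\langle a,\omega b'\rangle_{1,\beta}$ through two-pointed invariants one of whose entries has cohomological degree $<|b|$, and after reordering the inductive hypothesis applies, closing the induction.

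With the two-pointed case in hand the remaining point counts are routine: pulling $\langle a,b\rangle_{1,\beta}\sim 0$ back along the forgetful map and inserting $\gamma_3,\ldots,\gamma_n$ (Definition~\ref{pbsymbdef}, using that the string, divisor and dilaton equations persist under pullback) yields $\langle\gamma_1,\ldots,\gamma_n\rangle_{1,\beta}\sim 0$ for all $n\ge 3$, while the zero-pointed invariants are recovered from one-pointed ones by $\langle\omega\rangle_{1,\beta}=(\int_\beta\omega)\langle\,\rangle_{1,\beta}$ for $\beta\neq 0$ and are absent (unstable) for $\beta=0$. The main obstacle is the core step itself: unlike the two-term WDVV symbol, the symbol of Getzler's relation has many boundary contributions, so one must compute it explicitly, verify that the coefficient of the leading two-pointed term is nonzero, and record the base cases of the inductions on $|b|$ and on $\beta$. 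This is precisely the place where Getzler's original argument omits the initial conditions of the difference equation, so pinning those initial values down honestly is the crux of the proof.
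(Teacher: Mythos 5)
Your skeleton matches the paper's: eliminate $\psi$-classes by Proposition~\ref{psivanishing}, prove the two-pointed primary claim $\left<a,b\right>_{1,\beta}\sim 0$ using the symbol of Getzler's relation $L_{1,4}$, then pull back to get all $n$-pointed invariants. But the mechanism you propose for the core step --- induction on $|b|$, writing $b=\omega b'$, and ``solving for the leading term'' of a single evaluation of $\Sigma(L_{1,4})$ --- does not work, and this is exactly where the actual content of the proof lies. Under the hypothesis every class is (a sum of) $c\omega^k$ with $|c|\le 2$, so the invariants in play are $f(i)=\left<a_0\omega^i,b_0\omega^{l-i}\right>$ with $|a_0|,|b_0|\le2$ and $l$ fixed. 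If you feed entries into $\Sigma(L_{1,4})$ in any way that makes your target $f(p)$ appear (e.g. $(a_0\omega^p,\,b_0\omega^{l-2-p},\,\omega,\,\omega)$; degenerate choices such as inserting the identity class make the target's coefficient cancel outright), then after the divisor equation the relation collapses to the three-term recurrence
\begin{equation*}
f(p+2)-2f(p+1)+f(p)\;\sim\;h,\qquad h=\left<a_0b_0\omega^{l-2},\omega^2\right>.
\end{equation*}
There is no ``leading term'' here: all five invariants are two-pointed, primary, genus~$1$, of the same degree $\beta$ and the same grading $n-\sum k_i=2$, so none of them can be discarded as lower order under $\sim$, and the recurrence couples unknowns on both sides of the target. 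The only values known a priori (from Lemma~\ref{genstring}) are the two extremes $f(0)\sim0$ and $f(l)\sim 0$; in particular $f(1)$ is \emph{not} known when $|a_0|>0$. So a term-by-term induction on the cohomological degree of one entry stalls at the very first step: the relation at $p=0$ involves the two unknowns $f(1),f(2)$ and $h$ simultaneously.

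What is actually needed, and what the paper does, is to treat the full family of relations for $0\le p\le l-2$ as a two-point boundary value problem and solve the whole linear system at once (Lemma~\ref{diffeq}), yielding $\left<a\omega^i,b\omega^j\right>\sim\frac{ij}{2}\left<ab\omega^{i+j-2},\omega^2\right>$; and then --- a step entirely absent from your proposal --- to kill the inhomogeneous term $h$, which is of the same order and cannot be absorbed into $\sim$, by the substitutions $b\to\omega,\,j\to1$ and $b\to1,\,j\to2$, which force $\left<a\omega^i,\omega^2\right>\sim\frac{i}{2}\left<a\omega^i,\omega^2\right>\sim i\left<a\omega^i,\omega^2\right>$ and hence $\left<a\omega^i,\omega^2\right>\sim0$. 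You do gesture at ``the initial conditions of the difference equation'' as the crux, which is the right instinct, but your proposed induction is not a repair of that gap: it is structurally incompatible with the shape of the relations, which determine the invariants only globally (through consistency between both boundary conditions), never one at a time.
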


We will use the following very basic result about difference equations.
\begin{lemma}
	\label{diffeq}
	Let $h\in G_{g,\beta}^X$ and let $I$ be the set $\{0,1,\ldots,N\}$.
	Let $f:I\rightarrow G_{g,\beta}^X$ be a solution to the equation
	\[f(i+2)-2 f(i+1)+f(i)-h = 0,\]
	for all $0\leq i\leq N-2$.
	If $l\in I$, then $f$ satisfies the formula
	\[l f(i) =i  f(l) +(l-i) f(0) - \frac{li(l-i)}{2} h.\]
\end{lemma}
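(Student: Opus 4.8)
The plan is to recognise this as a second-order linear difference equation whose second difference is the constant $h$, to solve it in closed form, and then to eliminate the single free parameter by matching the value at $l$. Everything is $\mathbb{Q}$-linear, so the fact that $f$ takes values in the vector space $G_{g,\beta}^X$ rather than in $\mathbb{Q}$ causes no difficulty: the coefficients appearing in the final formula are rational scalars.

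First I would pass to the first difference $d(i) := f(i+1)-f(i)$, defined for $0\le i\le N-1$. The hypothesis $f(i+2)-2f(i+1)+f(i)=h$ rewrites as $d(i+1)-d(i)=h$, so a one-step induction gives $d(i)=d(0)+i\,h$ on the whole range. Telescoping then recovers $f$: for any $i\in I$,
\[f(i)=f(0)+\sum_{j=0}^{i-1}d(j)=f(0)+i\,d(0)+\frac{i(i-1)}{2}\,h,\]
where I have used $\sum_{j=0}^{i-1}j=\frac{i(i-1)}{2}$. This is the general closed form, with $d(0)=f(1)-f(0)$ the only undetermined parameter.

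Finally I would eliminate $d(0)$ using the value at $l$. Evaluating the closed form at $i=l$ gives $f(l)=f(0)+l\,d(0)+\frac{l(l-1)}{2}h$, hence $l\,d(0)=f(l)-f(0)-\frac{l(l-1)}{2}h$. Multiplying the closed form for $f(i)$ by $l$ and substituting this expression for $l\,d(0)$ (noting $l\,i\,d(0)=i\cdot l\,d(0)$) yields
\[l f(i)=l f(0)+i\bigl(f(l)-f(0)\bigr)+\frac{h}{2}\bigl(l\,i(i-1)-i\,l(l-1)\bigr),\]
and the bracketed coefficient of $h/2$ collapses to $l\,i(i-l)=-li(l-i)$, giving precisely $l f(i)=i f(l)+(l-i)f(0)-\frac{li(l-i)}{2}h$.

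Since this is entirely elementary difference calculus, there is no real obstacle to overcome. The only points that require minor care are that the recurrence is assumed only on the stated range of indices, so that the telescoping sum and the evaluation at $l$ both stay inside $I$, and that the simplification of the $h$-coefficient be carried out correctly. Because of this, the result should be presented as a short self-contained computation rather than invoking any external theory.
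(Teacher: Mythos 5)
Your proof is correct and amounts to the same elementary difference-calculus argument the paper uses: the paper homogenizes by subtracting a shifted copy of the recurrence (obtaining a third-order homogeneous equation whose solutions are quadratics in $i$) and then solves, while you solve the inhomogeneous equation directly by telescoping first differences and eliminating the free parameter $f(1)-f(0)$ via the value at $l$. The two routes are minor variants of one another, and your closed form $f(i)=f(0)+i\,d(0)+\frac{i(i-1)}{2}h$ together with the elimination step reproduces the stated identity exactly.
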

\begin{proof}
	Subtract a shifted version of the equation to obtain a homogeneous 3rd order difference equation, which can be solved.
\end{proof}

Now we can prove the reconstruction theorem:
\begin{proof}[Proof of Theorem~\ref{g1reconstruction}]
By Proposition \ref{psivanishing} we only need to consider primitive invariants.
The primitive part of Getzler's relation $L_{1,4}\in R^2_{1,4}$ is
\[3\left[\begin{tikzpicture}[baseline,el/.style = {inner sep=2pt, align=left, sloped},every child node/.style={inner sep=1,font=\tiny}]
      \tikzstyle{level 1}=[counterclockwise from=315,level distance=9mm,sibling angle=90]
			\node (A0) [draw,circle,inner sep=1] at (0:1.3) {$\scriptstyle{0}$} child {node {}} child {node {}};
      \tikzstyle{level 1}=[counterclockwise from=135,level distance=9mm,sibling angle=90]
			\node (A1) [draw,circle,inner sep=1] at (0:0) {$\scriptstyle{1}$};
      \tikzstyle{level 1}=[counterclockwise from=135,level distance=9mm,sibling angle=90]
			\node (A2) [draw,circle,inner sep=1] at (0:-1.3) {$\scriptstyle{0}$} child {node {}} child {node {}};

      \path (A0) edge (A1);
      \path (A1) edge (A2);
\end{tikzpicture}\right]
-4\left[\begin{tikzpicture}[baseline,el/.style = {inner sep=2pt, align=left, sloped},every child node/.style={inner sep=1,font=\tiny}]
      \tikzstyle{level 1}=[counterclockwise from=0,level distance=9mm,sibling angle=90]
			\node (A0) [draw,circle,inner sep=1] at (0:1.3) {$\scriptstyle{1}$} child {node {}};
      \tikzstyle{level 1}=[counterclockwise from=90,level distance=9mm,sibling angle=90]
			\node (A1) [draw,circle,inner sep=1] at (0:0) {$\scriptstyle{0}$} child {node {}};
      \tikzstyle{level 1}=[counterclockwise from=135,level distance=9mm,sibling angle=90]
			\node (A2) [draw,circle,inner sep=1] at (0:-1.3) {$\scriptstyle{0}$} child {node {}} child {node {}};

      \path (A0) edge (A1);
      \path (A1) edge (A2);
\end{tikzpicture}\right]
.\]
Let $a,b\in H^{\leq2}(X)$.
Fix an integer $l\geq2$, we define
\[f(i) :=\left<a \omega^{i},b \omega^{l-i}\right>,\qquad
h :=\left<a b\omega^{l-2},\omega^2\right>.\]
Applying Lemma~\ref{genstring} to the symbol of $L_{1,4}$ gives
\[\Sigma(L_{1,4})(a\omega^k,b\omega^{l-2-k},\omega,\omega)\sim
	f(k+2)-2 f(k+1)+f(k)-h,\]
	for every $0\leq k\leq l-2$.
	Again by Lemma~\ref{genstring} $f(0)\sim f(l)\sim 0$ so by Lemma~\ref{diffeq} we have
	\begin{equation}
		\label{g1w2}
	<a\omega^i,b\omega^j>\sim \frac{ij}{2}<ab\omega^{i+j-2},\omega^2>.
	\end{equation}
	We substitute $b\rightarrow\omega, j\rightarrow 1$ or $b\rightarrow 1,j\rightarrow 2$ to obtain
	\[<a\omega^i,\omega^2>\sim \frac{i}{2}<a\omega^{i},\omega^2>\sim i<a\omega^{i},\omega^2>,\]
	thus
	\begin{equation}
		\label{g1w2=0}
	<a\omega^i,\omega^2>\sim 0.
\end{equation}
Using our hypothesis on $X$ we can write $ab\omega^{i+j-2}=c\omega^k$ for some $k\in\mathbb{Z}_{\geq0}$ and $c\in H^{\leq2}(X)$.
	So we can apply (\ref{g1w2=0}) to the right-hand side of (\ref{g1w2}) to obtain
\begin{equation}
	\label{g1result}
		<a\omega^i,b\omega^j>\sim 0.
\end{equation}
We can repeatedly pull back (\ref{g1result}) and insert cohomology classes $\gamma_3,\ldots,\gamma_n$ to obtain
\[<a\omega^i,b\omega^j,\gamma_3,\ldots,\gamma_n>\sim 0.\]
\end{proof}

\section{Reconstruction for genus~$2$}
\label{genus2section}
In this chapter we prove our reconstruction theorem for genus~$2$ Gromov-Witten invariants.

\begin{theorem}
	\label{g2reconstruction}
	If $P^i(X)=0$ for $i>2$, then all (including descendant) genus two Gromov-Witten invariants can be reconstructed recursively from genus two invariants with at most two points and invariants of lower genus.
\end{theorem}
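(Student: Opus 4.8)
The plan is to run the symbol method of Theorems \ref{g0reconstruction} and \ref{g1reconstruction}, the goal being to show that every genus two invariant with at least three points is $\sim 0$, hence reconstructible from genus two invariants with at most two points and invariants of lower genus or degree. The one essential new feature is that, by Proposition \ref{psivanishing}, in genus two a single $\psi$-class can no longer be removed, so the extended symbol of Definition \ref{symboldef} --- which keeps track of descendant terms --- must be carried throughout, and the three Pixton relations promised in the introduction replace the single relation that sufficed in lower genus.

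I would first treat the primary invariants, following the proof of Theorem \ref{g1reconstruction} the same way but one genus higher. The relevant relation is the new relation in $R^3(\Mbstack_{2,6})$, whose symbol (as noted in the introduction) contains no $\psi$-classes and hence involves only three-point primary invariants. Fixing $a,b,c\in H^{\leq 2}(X)$ and an ample class $\omega$, setting $f(i)=\langle a\omega^i,\,b\omega^{l-i},\,c\rangle$, and filling the remaining marked points of the relation with copies of $\omega$, the symbol should collapse under the String and Divisor Equations (Lemma \ref{genstring}) to a second-order difference equation $f(i+2)-2f(i+1)+f(i)-h\sim 0$ of the type solved by Lemma \ref{diffeq}, with boundary values that vanish by Lemma \ref{genstring} as in genus one. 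Combining the resulting identities with the hypothesis $P^i(X)=0$ for $i>2$, which lets me write any class of degree $>2$ as $\omega^k\cup(\text{class of degree} \leq 2)$ exactly as in Theorems \ref{g0reconstruction} and \ref{g1reconstruction}, this should force all three-point primary invariants to be $\sim 0$, i.e.\ reducible to two-point invariants.

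Next I would treat the three-point descendant invariants. Here Proposition \ref{psivanishing} enters to guarantee that the two remaining relations need carry only single $\psi$-decorations, so that their symbols --- now genuinely involving descendant invariants, unlike in genus zero and one --- remain tractable; Lemma \ref{psipullback} ($\tilde\psi_i=F^*\psi_i+D_i$) and Lemma \ref{m03} are used to pass between the $\psi$-classes on curves and the $\tilde\psi$-classes on maps, contracting the genus zero degree zero tails that appear. Solving the difference equations arising from these two symbols, now with a $\psi$-class carried through, should reduce the three-point descendant invariants to two-point invariants as well. Finally, with all three-point invariants shown to be $\sim 0$, I would repeatedly pull these relations back and insert further cohomology classes $\gamma_4,\ldots,\gamma_n$ (Definition \ref{pbsymbdef}, via the String Equation) to reduce every genus two invariant with more than three points to the three- and two-point cases.

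The step I expect to be the main obstacle is this descendant reduction. In genus zero and one all of the analysis could be carried out among primary invariants, since the $\psi$-classes were eliminated at the outset; in genus two every difference equation and every induction must instead be run in the presence of a surviving $\psi$-class, and the two auxiliary relations have to be chosen with some care so that the descendant terms in their symbols isolate exactly the three-point invariant one wants to solve for, without reintroducing terms of equal or higher order in the grading of $\mathrm{G}_{g,\beta}^X$.
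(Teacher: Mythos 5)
There is a genuine gap, and it sits exactly where you placed your main confidence: the primary three-point invariants. Your plan is to set $f(i)=\langle a\omega^i, b\omega^{l-i}, c\rangle$, fill the remaining legs of the relation in $R^3(\Mbstack_{2,6})$ with copies of $\omega$, and collapse the symbol to a one-variable difference equation solvable by Lemma~\ref{diffeq}, exactly as in genus one. This fails, and the paper says so explicitly: since the ample degree is now distributed over \emph{three} marked points rather than two, the symbol of $L_{2,6}$ produces terms $\langle a\omega^{i'}, b\omega^{j'}, c\omega^{k'}\rangle$ in which all three exponents vary (the String/Divisor Equation moves $\omega$'s onto every non-divisor insertion, including the one you froze at $c$). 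You obtain a two-parameter family of unknowns, i.e.\ a system $\Phi_k$ of equations indexed by all splittings of the degree, not a second-order recursion in one index, and Lemma~\ref{diffeq} does not apply. The paper's resolution is quite different: it first derives the auxiliary relations $\Theta_{j,k}(a,b)$ --- which come from the \emph{descendant} Lemmas~\ref{243lemma}, \ref{psiomega2} and \ref{psiomega}, i.e.\ from the relations $L_{2,4}$ and $L_{2,5}$ --- to reduce to exponents $j,k\leq 2$, and then verifies by computer that finitely many explicit linear systems built from $\Phi_0,\Phi_1,\Phi_2$ and pullbacks of $\Theta$ have full rank (Lemma~\ref{primitivereconstruction}). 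Neither the reduction step nor the rank computation appears in your proposal, and there is no evident way to avoid them by hand.

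This also inverts your logical order. You propose to settle the primary invariants first, ``following the proof of Theorem~\ref{g1reconstruction} one genus higher,'' and only then attack descendants. In the paper the dependency runs the other way: the descendant difference equations (where the two-point structure \emph{does} make Lemma~\ref{diffeq} applicable, because the $\psi$-class pins down one insertion) are established first, and the primary reconstruction consumes them through $\Theta$. Your instinct that the descendant bookkeeping is the delicate new feature is right, but the actual bottleneck is the opposite of what you predicted: the descendant lemmas are the tractable part, and it is the purely primary case that cannot be handled by the genus-zero/one template.
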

\begin{proof}
	By Proposition \ref{psivanishing} we only need to consider primitive invariants and invariants with a single $\psi$-class.
	Let $a,b,c\in H^*(X)$ such that $|a|,|b|,|c|\leq2$ and let $i,j,k\geq0$, then
	\begin{equation*}
	\left<a \omega^i,b \omega^j,c\omega^k\right>\sim0,
	\end{equation*}
	and
	\[\left<\psi a\omega^i,b\omega^j,c\omega^k\right>\sim \text{a linear combination of 2 pointed primary invariants}.\]
	This is proven in Lemmas \ref{primitivereconstruction} and \ref{nonprimitivereconstruction}.

	By pulling back the two relations above we can always express an invariant with 3 or more points as a linear combination of invariants with lower degree, genus or number of points.
\end{proof}

We will now prove the Lemmas required for Theorem~\ref{g2reconstruction}.
We frequently apply Lemma~\ref{genstring} throughtout the proof without explicitly mentioning it every time.
The proof requires some buildup where for simplicity we first work with Gromov-Witten invariants that have only one or two points.

Our first aim is to express descendant invariants in terms of primary invariants.
\begin{lemma}
	\label{243lemma}
	Let $a,b\in H^{\leq2}(X)$, the following holds for all $i,j\geq0$
	\begin{align*}
	(i+j)\left<\psi a\omega^{i},b\omega^{j}\right>
	\sim &
	i\left<a b \omega^{i+j}\right>
	+j\left<\psi a,b \omega^{i+j}\right>
	\\ &
	-\frac{ij(i+j)}{2}\Big(
	\left<\psi\omega^{2},a b\omega^{i+j-2}\right>
	\\ &
	-2\left<\psi\omega,a b\omega^{i+j-1}\right>
	+3\left<a b \omega^{i+j}\right>
	\Big).
	\end{align*}
\end{lemma}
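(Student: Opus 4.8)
The plan is to recognize the claimed identity as the solution of a second-order difference equation in a one-parameter family of two-pointed descendant invariants, exactly as in the proof of Theorem~\ref{g1reconstruction}, but now keeping a single $\psi$-class alive throughout. Fix $l:=i+j$ and set
\[f(k):=<\psi a\omega^{k},b\omega^{l-k}>\]
for $0\le k\le l$. I would show that $f$ satisfies a difference equation of the shape $f(k+2)-2f(k+1)+f(k)-h\sim0$ with a forcing term $h$ that is independent of $k$, and then invoke Lemma~\ref{diffeq} with $N=l$ to get
\[l\,f(i)\sim i\,f(l)+(l-i)\,f(0)-\frac{l\,i(l-i)}{2}\,h.\]
Reading off $f(0)=<\psi a,b\omega^{l}>$, $l-i=j$, and (see below) $f(l)\sim<ab\omega^{l}>$, this is precisely the asserted formula once $h$ is identified.

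To produce the difference equation I would take the symbol, in the extended sense of Definition~\ref{symboldef} which retains $\psi$-classes, of the genus~$2$ relation in $R^*(\Mbstack_{2,4})$ obtained from Pixton's relations whose primitive part carries a single $\psi$-class on the genus~$2$ vertex, and evaluate it on the inputs $a\omega^{k},\,b\omega^{l-2-k},\,\omega,\,\omega$ (valid for $0\le k\le l-2$). Since the symbol of a relation is $\sim0$, repeatedly applying the combined String and Divisor Equation (Lemma~\ref{genstring}) to the two $\omega$-slots collapses the four-pointed symbol into two-pointed invariants. The terms in which the two $\omega$'s are absorbed in the balanced ways assemble into the second difference $f(k+2)-2f(k+1)+f(k)$, while the remaining terms, in which an $\omega$ lands on the $\psi$-decorated point or the absorption is uneven, collect into the $k$-independent combination
\[h=<\psi\omega^{2},ab\omega^{l-2}>-2<\psi\omega,ab\omega^{l-1}>+3<ab\omega^{l}>.\]
Establishing that these leftover terms really are constant in $k$ and combine with exactly these integer coefficients is the heart of the computation.

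For the boundary values I would use Lemma~\ref{genstring} again. The value $f(0)=<\psi a,b\omega^{l}>$ is kept as is, being itself a two-pointed invariant carrying the surviving $\psi$-class. For $f(l)=<\psi a\omega^{l},b>$ I would absorb the divisor $b$ (with $|b|\le2$) into the $\psi$-decorated slot; since that slot carries $\psi^{1}$, the String and Divisor Equation lowers the power to $\psi^{0}$ and yields $f(l)\sim<ab\omega^{l}>$ (here $g=2$, so none of the exceptional cases of Lemma~\ref{genstring} occur). Substituting these into the solution from Lemma~\ref{diffeq} gives the lemma for $l\ge2$; the degenerate cases $l=i+j\le1$, where the range $0\le k\le l-2$ is empty, follow directly from the Divisor Equation and are consistent because the coefficient $\frac{ij(i+j)}{2}$ vanishes there.

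The main obstacle is the symbol computation in the middle paragraph. Unlike genus~$1$, where the forcing term was a single invariant and both boundary values vanished, here the $\psi$-class cannot be eliminated (Proposition~\ref{psivanishing}), so the symbol genuinely lives in the $\psi$-decorated setting and the forcing term $h$ is a three-term combination of descendant invariants. The delicate bookkeeping is to track, under the String Equation pullback described after Definition~\ref{pbsymbdef} (which shifts $\psi$-powers), which terms of the expanded symbol are of maximal order and which fall into the $\sim0$ equivalence, and to verify that the maximal-order terms reorganize exactly into a constant-coefficient second difference plus the displayed $h$. Pinning down the coefficients $1,-2,3$ and the overall signs is where the real work lies.
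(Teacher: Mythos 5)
Your proposal follows essentially the same route as the paper's proof: fixing $l=i+j$, defining $f(k)=\left<\psi a\omega^{k},b\omega^{l-k}\right>$ and the same three-term forcing term $h$, obtaining the second difference $f(k+2)-2f(k+1)+f(k)-h\sim0$ from the symbol $\Sigma(L_{2,4})(a\omega^{k},b\omega^{l-2-k},\omega,\omega)$, and concluding via Lemma~\ref{diffeq} with the boundary values $f(0)$ and $f(l)\sim\left<ab\omega^{l}\right>$ (which the paper leaves implicit). The only piece you defer --- verifying that a relation $L_{2,4}\in R^{3}_{2,4}$ exists whose primitive part produces exactly the coefficients $1,-2,3$ in $h$ --- is precisely what the paper delegates to its computer search for Pixton relations, so there is no substantive difference in approach.
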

\begin{proof}
	Let us fix the number $l:=i+j$.
	For $i=0$ or $j=0$ the statement follows from Lemma~\ref{genstring} so we can assume $l\geq2$.
	Using our computer program, we find a relation $L_{2,4}\in R^3_{2,4}$ with primitive part
\[
\left[	\begin{tikzpicture}[baseline,el/.style = {inner sep=2pt, align=left, sloped},every child node/.style={inner sep=1,font=\tiny}]
      \path(0,0) ellipse (2 and 2);
      \tikzstyle{level 1}=[counterclockwise from=0,level distance=9mm,sibling angle=0]
      \node (A0) [draw,circle,inner sep=1] at (0:1) {$\scriptstyle{0}$} child {node {}};
      \tikzstyle{level 1}=[counterclockwise from=60,level distance=9mm,sibling angle=120]
      \node (A1) [draw,circle,inner sep=1] at (120:1) {$\scriptstyle{0}$} child {node {}} child {node {}};
      \tikzstyle{level 1}=[counterclockwise from=240,level distance=9mm,sibling angle=0]
      \node (A2) [draw,circle,inner sep=1] at (240:1) {$\scriptstyle{2}$} child {node {2}};

      \path (A0) edge [bend left=0.000000] (A1);
      \path (A0) edge [bend left=0.000000] node[el,above,font=\tiny,pos=.8] {$\psi$} (A2);
    \end{tikzpicture}\right]
+3\left[	\begin{tikzpicture}[baseline,el/.style = {inner sep=2pt, align=left, sloped},every child node/.style={inner sep=1,font=\tiny}]
      \path(0,0) ellipse (2 and 2);
      \tikzstyle{level 1}=[counterclockwise from=0,level distance=9mm,sibling angle=0]
			\node (A0) [draw,circle,inner sep=1] at (0:1) {$\scriptstyle{0}$} child {node {}};
      \tikzstyle{level 1}=[counterclockwise from=60,level distance=9mm,sibling angle=120]
      \node (A1) [draw,circle,inner sep=1] at (120:1) {$\scriptstyle{0}$} child {node {}} child {node {2}};
      \tikzstyle{level 1}=[counterclockwise from=240,level distance=9mm,sibling angle=0]
			\node (A2) [draw,circle,inner sep=1] at (240:1) {$\scriptstyle{2}$} child {node {$\psi$}};

      \path (A0) edge [bend left=0.000000] (A1);
      \path (A0) edge [bend left=0.000000] (A2);
	\end{tikzpicture}\right]
-3\left[	\begin{tikzpicture}[baseline,el/.style = {inner sep=2pt, align=left, sloped},every child node/.style={inner sep=1,font=\tiny}]
      \path(0,0) ellipse (2 and 2);
      \tikzstyle{level 1}=[counterclockwise from=-60,level distance=9mm,sibling angle=120]
      \node (A0) [draw,circle,inner sep=1] at (0:1) {$\scriptstyle{0}$} child {node {}} child {node {}};
      \tikzstyle{level 1}=[counterclockwise from=60,level distance=9mm,sibling angle=120]
      \node (A1) [draw,circle,inner sep=1] at (120:1) {$\scriptstyle{0}$} child {node {}} child {node {2}};
      \node (A2) [draw,circle,inner sep=1] at (240:1) {$\scriptstyle{2}$};
      \path (A0) edge [bend left=0.000000] node[el,below,font=\tiny,pos=.8] {$\psi$} (A2);
      \path (A1) edge [bend left=0.000000] (A2);
	\end{tikzpicture}\right]
-3\left[
	\begin{tikzpicture}[baseline,el/.style = {inner sep=2pt, align=left, sloped},every child node/.style={inner sep=1,font=\tiny}]
      \path(0,0) ellipse (2 and 2);
      \tikzstyle{level 1}=[counterclockwise from=0,level distance=9mm,sibling angle=120]
			\node (A0) [draw,circle,inner sep=1] at (0:1) {$\scriptstyle{0}$} child {node {}};
      \tikzstyle{level 1}=[counterclockwise from=90,level distance=9mm,sibling angle=120]
			\node (A1) [draw,circle,inner sep=1] at (90:1) {$\scriptstyle{0}$} child {node {}};
      \tikzstyle{level 1}=[counterclockwise from=120,level distance=9mm,sibling angle=120]
      \node (A2) [draw,circle,inner sep=1] at (180:1) {$\scriptstyle{0}$} child {node {}} child {node {2}};
      \node (A3) [draw,circle,inner sep=1] at (270:1) {$\scriptstyle{2}$};

			\path (A0) edge [bend left=0.000000] (A1);
      \path (A1) edge [bend left=0.000000] (A2);
      \path (A0) edge [bend left=0.000000] (A3);
\end{tikzpicture}
\right].
\]
	We define
	\[f(i) :=\left<\psi a  \omega^{i},b\omega^{l-i}\right>,	\quad
		h :=\left<\psi\omega^2,a b\omega^{l-2}\right>
	-2\left<\psi\omega,a b\omega^{l-1}\right>
	+3\left<a  b \omega^{l}\right>,\]
	in order to obtain
	\[\Sigma(L_{2,4})(a\omega^k,b\omega^{l-2-k},\omega,\omega)\sim
	f(k+2)-2 f(k+1)+f(k)-h,\]
	for every $0\leq k\leq l-2$.
	Applying Lemma~\ref{diffeq} gives the desired formula.
\end{proof}
In order to get rid of the term with $\psi\omega^2$ in Lemma~\ref{243lemma}, we do the following:
\begin{corollary}
	\label{psiomega2}
	Let $b\in H^{\leq2}(X)$, the following holds for all $j\geq0$
	\begin{equation*}
		(j+1)(j+2)\left<\psi\omega^{2},b\omega^{j}\right>
	\sim
	2j(j+2)\left<\psi\omega,b \omega^{j+1}\right>
	-(3j^2+3j-2)\left<b\omega^{i+2}\right>
	.
	\end{equation*}
\end{corollary}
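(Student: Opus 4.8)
The plan is to obtain the Corollary as a direct specialization of Lemma~\ref{243lemma}, with no new tautological relation required. Concretely, I would substitute $a=\omega$ and $i=1$ into the formula of Lemma~\ref{243lemma}. This is legitimate because $\omega\in H^2(X)\subseteq H^{\leq2}(X)$ is an admissible choice for the class $a$, and $i=1$ is allowed since the Lemma holds for all $i,j\geq0$. The value of this particular choice is that $\psi a=\psi\omega$, which is exactly the decorated class appearing on the right-hand side of the Corollary.

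First I would record what each term becomes under this substitution. Combining the $\omega$-factors (so that every exponent is manifestly nonnegative, even for small $j$), the left-hand side $(i+j)\left<\psi a\omega^{i},b\omega^{j}\right>$ becomes $(j+1)\left<\psi\omega^{2},b\omega^{j}\right>$; the term $i\left<ab\omega^{i+j}\right>$ becomes $\left<b\omega^{j+2}\right>$; the term $j\left<\psi a,b\omega^{i+j}\right>$ becomes $j\left<\psi\omega,b\omega^{j+1}\right>$; the prefactor $\frac{ij(i+j)}{2}$ becomes $\frac{j(j+1)}{2}$; and inside the parenthesis one gets $\left<\psi\omega^{2},b\omega^{j}\right>$, $\left<\psi\omega,b\omega^{j+1}\right>$ and $\left<b\omega^{j+2}\right>$ in place of the three summands. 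Thus Lemma~\ref{243lemma} specializes to
\[
(j+1)\left<\psi\omega^{2},b\omega^{j}\right>\sim\left<b\omega^{j+2}\right>+j\left<\psi\omega,b\omega^{j+1}\right>-\frac{j(j+1)}{2}\Big(\left<\psi\omega^{2},b\omega^{j}\right>-2\left<\psi\omega,b\omega^{j+1}\right>+3\left<b\omega^{j+2}\right>\Big).
\]

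The crucial observation is that the target term $\left<\psi\omega^{2},b\omega^{j}\right>$ now appears on both sides. I would move it to the left, which produces the coefficient $(j+1)+\frac{j(j+1)}{2}=\frac{(j+1)(j+2)}{2}$; the $\left<\psi\omega,b\omega^{j+1}\right>$ terms then collect with coefficient $j+j(j+1)=j(j+2)$, and the $\left<b\omega^{j+2}\right>$ terms with coefficient $1-\frac{3j(j+1)}{2}=-\frac{3j^2+3j-2}{2}$. Multiplying through by $2$ gives exactly the claimed identity (the $\omega^{i+2}$ in the statement being understood as $\omega^{j+2}$).

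There is essentially no obstacle here: the argument is a one-line substitution followed by linear bookkeeping of coefficients, so the only things to double-check are the coefficient arithmetic and that the merged $\omega$-exponents stay nonnegative for the boundary values $j=0,1$. Since $a\,\omega^{i+j-2}=\omega^{\,i+j-1}=\omega^{j}$ after substituting $a=\omega,\ i=1$, no negative power ever actually occurs, so the specialized relation — and hence the Corollary — is valid for all $j\geq0$ as stated.
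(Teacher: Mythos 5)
Your proposal is correct and is exactly the paper's approach: the paper's entire proof of Corollary~\ref{psiomega2} is the substitution $a\rightarrow\omega$, $i\rightarrow 1$ in Lemma~\ref{243lemma}, which you carry out with the coefficient bookkeeping made explicit. Your arithmetic (collecting the $\left<\psi\omega^2,b\omega^j\right>$ term on the left, the coefficients $\frac{(j+1)(j+2)}{2}$, $j(j+2)$, $-\frac{3j^2+3j-2}{2}$, then clearing the factor $2$) checks out, as does your reading of the statement's $\omega^{i+2}$ as a typo for $\omega^{j+2}$.
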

\begin{proof}
	Substitute $a\rightarrow\omega$ and $i\rightarrow 1$ in Lemma~\ref{243lemma}.
\end{proof}
Now we want to get rid of the term in Lemma~\ref{243lemma} with $\psi\omega$ in it.
However, in order to get rid of terms with a $\psi\omega$ we need to consider 3-pointed invariants.
\begin{lemma}
	\label{psiomega}
	Let $a,b\in H^{\leq2}(X)$, the following holds for all $i,j\geq0$
	\begin{align*}
		(i+j)\left<\psi\omega,a\omega^i,b\omega^{j}\right>
		\sim &
		2(i+j)\left<a\omega^{i+1},b\omega^{j}\right>+2(i+j)\left<a\omega^i,b\omega^{j+1}\right>
		\\ &
		-i\left<a\omega^{i+j},b\omega\right>
		-j\left<a\omega,b\omega^{i+j}\right>
		\\ &
		-\frac{ij(i+j)}{6}\Big(
			3\left<a b\omega^{i+j-1},\omega^{2}\right>-\left<a b\omega^{i+j-2},\omega^{3}\right>
		\big).
	\end{align*}
\end{lemma}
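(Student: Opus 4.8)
The plan is to reuse the mechanism of Lemma~\ref{243lemma} and Corollary~\ref{psiomega2}: exhibit a tautological relation, pass to its symbol, recognise a second-order difference equation, and close it with Lemma~\ref{diffeq}. The new feature is that the target $<\psi\omega,a\omega^i,b\omega^j>$ is three-pointed, so the relevant symbol is homogeneous of degree $n-r=2$; I would therefore look (with the computer search, exactly as in Lemma~\ref{243lemma}) for a relation $L_{2,5}\in R^3_{2,5}$ whose primitive part carries a single $\psi$ on the genus~$2$ vertex, so that its symbol contains the descendant $<\psi\omega,\ldots>$ directly rather than through an absorbed divisor.

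First I would fix $l:=i+j$ and, for $0\le k\le l$, set
\[f(k):=<\psi\omega,a\omega^{k},b\omega^{l-k}>,\]
so that the quantity to be computed is $l\,f(i)$. Evaluating the symbol of $L_{2,5}$ on the insertions $a\omega^{k},b\omega^{l-2-k},\omega,\omega,\omega$ and repeatedly applying the string and divisor equations (Lemma~\ref{genstring}) to the divisor classes $\omega$, I expect the relation to collapse to the form
\[f(k+2)-2f(k+1)+f(k)-h\sim 0,\qquad 0\le k\le l-2,\]
with an inhomogeneous term $h$ that does not depend on $k$. The primary two-pointed terms $2(i+j)<a\omega^{i+1},b\omega^{j}>$ and $2(i+j)<a\omega^{i},b\omega^{j+1}>$ of the statement should emerge from the divisor-equation corrections, while $h$, once its descendant contributions $<\psi\omega^2,\cdot>$ and $<\psi\omega,\cdot>$ are rewritten by Corollary~\ref{psiomega2} and Lemma~\ref{243lemma}, should collapse to the primary combination governing the $<ab\omega^{i+j-1},\omega^2>$ and $<ab\omega^{i+j-2},\omega^3>$ terms.

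Next I would record the boundary values. Placing the $\psi$-free class $a$ (respectively $b$) last and absorbing it with Lemma~\ref{genstring} gives
\[f(0)=<\psi\omega,a,b\omega^{l}>\sim<a\omega,b\omega^{l}>,\qquad f(l)=<\psi\omega,a\omega^{l},b>\sim<a\omega^{l},b\omega>,\]
which match the primary two-pointed invariants $<a\omega^{i+j},b\omega>$ and $<a\omega,b\omega^{i+j}>$ appearing on the right-hand side. Substituting $h$, $f(0)$ and $f(l)$ into Lemma~\ref{diffeq} and rearranging then produces $l\,f(i)$ in closed form, which is the claimed identity.

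The main obstacle is the bookkeeping of the string- and divisor-equation corrections. I must check that, after absorbing the three copies of $\omega$, the inhomogeneous term is genuinely independent of $k$ (otherwise Lemma~\ref{diffeq} does not apply), and that every descendant term generated in $h$ can be traded for the purely primary combination in the statement via Corollary~\ref{psiomega2}. Correspondingly, selecting the relation $L_{2,5}$ so that its primitive symbol has precisely this shape is the computational heart of the proof and is where I would rely on the computer search; the subsequent difference-equation step is then routine given Lemma~\ref{diffeq}.
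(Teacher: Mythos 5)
Your overall strategy is the paper's: a computer-found relation $L_{2,5}\in R^3_{2,5}$, its symbol at insertions of the shape $(a\omega^{k},b\omega^{l-2-k},\omega,\omega,\omega)$, a second-order difference equation, and Lemma~\ref{diffeq} closed off by boundary values from Lemma~\ref{genstring}. But there is a genuine gap exactly at the step you flag as needing verification: with your definition $f(k):=\left<\psi\omega,a\omega^{k},b\omega^{l-k}\right>$, the inhomogeneous term is \emph{not} independent of $k$. The symbol of the relation unavoidably also contains strata whose genus-$2$ vertex has two undecorated special points; after splitting, these contribute two-pointed \emph{primary} genus-$2$ invariants with $k$-dependent insertions, such as $\left<a\omega^{k+1},b\omega^{l-k}\right>$ and $\left<a\omega^{k},b\omega^{l-k+1}\right>$. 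These have the same genus, the same $\beta$, and the same grading degree $n-\sum_i k_i=2$ as your $f(k)$, so they cannot be discarded under $\sim$; nor can they be rewritten by Corollary~\ref{psiomega2} or Lemma~\ref{243lemma}, since those trade descendants for primaries, never the reverse (two-pointed primaries are irreducible initial data in this reconstruction). So your difference equation has a $k$-dependent right-hand side and Lemma~\ref{diffeq} does not apply. (Your other worry is harmless in the opposite direction: any two-pointed \emph{descendant} terms in the symbol have grading degree $1<2$ and are simply dropped by $\sim$, so $h$ needs no rewriting at all.)

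The symptom is visible in your endgame: from your boundary values $f(0)\sim\left<a\omega,b\omega^{l}\right>$ and $f(l)\sim\left<a\omega^{l},b\omega\right>$, Lemma~\ref{diffeq} would produce $+i\left<a\omega^{i+j},b\omega\right>+j\left<a\omega,b\omega^{i+j}\right>$, the opposite sign to the statement, and the terms $2(i+j)\left<a\omega^{i+1},b\omega^{j}\right>+2(i+j)\left<a\omega^{i},b\omega^{j+1}\right>$ would never appear --- they do not come from divisor-equation corrections. The paper's fix is to absorb the offending primary terms into the unknown function itself:
\[f(k):=\left<\psi\omega,a\omega^{k},b\omega^{l-k}\right>-2\left<a\omega^{k+1},b\omega^{l-k}\right>-2\left<a\omega^{k},b\omega^{l-k+1}\right>,\quad h:=\left<ab\omega^{l-1},\omega^{2}\right>-\tfrac{1}{3}\left<ab\omega^{l-2},\omega^{3}\right>.\]
With this $f$ the symbol collapses to $f(k+2)-2f(k+1)+f(k)-h$ with $h$ genuinely constant, the boundary values become $f(0)\sim-\left<a\omega,b\omega^{l}\right>$ and $f(l)\sim-\left<a\omega^{l},b\omega\right>$ (the extra primary corrections flip the sign and, multiplied by $l$ on the left-hand side of Lemma~\ref{diffeq}, generate the $2(i+j)$ terms), and Lemma~\ref{diffeq} then yields exactly the stated identity.
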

\begin{proof}
	Let us write $l:=i+j$.
	For $i=0$ or $j=0$ it is trivial so we can assume $l\geq2$.
	Using our computer program, we find a relation $L_{2,5}\in R_{2,5}^3$ with primitive part
\[
	2(\left[	\begin{tikzpicture}[baseline,el/.style = {inner sep=2pt, align=left, sloped},every child node/.style={inner sep=1,font=\tiny}]
      \path(0,0) ellipse (2 and 2);
      \tikzstyle{level 1}=[counterclockwise from=-30,level distance=9mm,sibling angle=120]
			\node (A0) [draw,circle,inner sep=1] at (0:1) {$\scriptstyle{0}$} child {node {1}} child {node {}};
      \tikzstyle{level 1}=[counterclockwise from=60,level distance=9mm,sibling angle=120]
      \node (A1) [draw,circle,inner sep=1] at (120:1) {$\scriptstyle{0}$} child {node{}} child {node {2}};
      \tikzstyle{level 1}=[counterclockwise from=240,level distance=9mm,sibling angle=0]
      \node (A2) [draw,circle,inner sep=1] at (240:1) {$\scriptstyle{2}$} child {node {}};

      \path (A1) edge [bend left=0.000000] (A2);
      \path (A0) edge [bend left=0.000000] node[el,below,font=\tiny,pos=.8] {$\psi$} (A2);
    \end{tikzpicture}\right]
-\left[	\begin{tikzpicture}[baseline,el/.style = {inner sep=2pt, align=left, sloped},every child node/.style={inner sep=1,font=\tiny}]
      \path(0,0) ellipse (2 and 2);
      \tikzstyle{level 1}=[counterclockwise from=-30,level distance=9mm,sibling angle=120]
			\node (A0) [draw,circle,inner sep=1] at (0:1) {$\scriptstyle{0}$} child {node {1}} child {node {}};
      \tikzstyle{level 1}=[counterclockwise from=60,level distance=9mm,sibling angle=120]
      \node (A1) [draw,circle,inner sep=1] at (120:1) {$\scriptstyle{0}$} child {node {}} child {node {2}};
      \tikzstyle{level 1}=[counterclockwise from=240,level distance=9mm,sibling angle=0]
      \node (A2) [draw,circle,inner sep=1] at (240:1) {$\scriptstyle{2}$} child {node {$\psi$}};

      \path (A1) edge [bend left=0.000000] (A2);
      \path (A0) edge [bend left=0.000000] (A2);
	\end{tikzpicture}\right])
	-2(\left[	\begin{tikzpicture}[baseline,el/.style = {inner sep=2pt, align=left, sloped},every child node/.style={inner sep=1,font=\tiny}]
      \path(0,0) ellipse (2 and 2);
      \tikzstyle{level 1}=[counterclockwise from=0,level distance=9mm,sibling angle=120]
			\node (A0) [draw,circle,inner sep=1] at (0:1) {$\scriptstyle{0}$} child {node {}};
      \tikzstyle{level 1}=[counterclockwise from=60,level distance=9mm,sibling angle=120]
      \node (A1) [draw,circle,inner sep=1] at (120:1) {$\scriptstyle{0}$} child {node {1}} child {node {2}};
      \tikzstyle{level 1}=[counterclockwise from=270,level distance=9mm,sibling angle=-120]
			\node (A2) [draw,circle,inner sep=1] at (240:1) {$\scriptstyle{2}$} child {node {}} child {node {}};

      \path (A0) edge [bend left=0.000000] (A1);
      \path (A0) edge [bend left=0.000000] node[el,below,font=\tiny,pos=.8] {$\psi$} (A2);
    \end{tikzpicture}\right]
-\left[	\begin{tikzpicture}[baseline,el/.style = {inner sep=2pt, align=left, sloped},every child node/.style={inner sep=1,font=\tiny}]
      \path(0,0) ellipse (2 and 2);
      \tikzstyle{level 1}=[counterclockwise from=0,level distance=9mm,sibling angle=120]
			\node (A0) [draw,circle,inner sep=1] at (0:1) {$\scriptstyle{0}$} child {node {}};
      \tikzstyle{level 1}=[counterclockwise from=60,level distance=9mm,sibling angle=120]
      \node (A1) [draw,circle,inner sep=1] at (120:1) {$\scriptstyle{0}$} child {node {1}} child {node {2}};
      \tikzstyle{level 1}=[counterclockwise from=270,level distance=9mm,sibling angle=-120]
			\node (A2) [draw,circle,inner sep=1] at (240:1) {$\scriptstyle{2}$} child {node {$\psi$}} child {node {}};

      \path (A0) edge [bend left=0.000000] (A1);
      \path (A0) edge [bend left=0.000000] (A2);
	\end{tikzpicture}\right])
\]\[
	-(\left[	\begin{tikzpicture}[baseline,el/.style = {inner sep=2pt, align=left, sloped},every child node/.style={inner sep=1,font=\tiny}]
      \path(0,0) ellipse (2 and 2);
      \tikzstyle{level 1}=[counterclockwise from=0,level distance=9mm,sibling angle=120]
			\node (A0) [draw,circle,inner sep=1] at (0:1) {$\scriptstyle{0}$} child {node {}};
      \tikzstyle{level 1}=[counterclockwise from=60,level distance=9mm,sibling angle=120]
      \node (A1) [draw,circle,inner sep=1] at (120:1) {$\scriptstyle{0}$} child {node {}} child {node {2}};
      \tikzstyle{level 1}=[counterclockwise from=270,level distance=9mm,sibling angle=-120]
			\node (A2) [draw,circle,inner sep=1] at (240:1) {$\scriptstyle{2}$} child {node {}} child {node {1}};

      \path (A0) edge [bend left=0.000000] (A1);
      \path (A0) edge [bend left=0.000000] node[el,below,font=\tiny,pos=.8] {$\psi$} (A2);
    \end{tikzpicture}\right]
-\left[	\begin{tikzpicture}[baseline,el/.style = {inner sep=2pt, align=left, sloped},every child node/.style={inner sep=1,font=\tiny}]
      \path(0,0) ellipse (2 and 2);
      \tikzstyle{level 1}=[counterclockwise from=0,level distance=9mm,sibling angle=120]
			\node (A0) [draw,circle,inner sep=1] at (0:1) {$\scriptstyle{0}$} child {node {}};
      \tikzstyle{level 1}=[counterclockwise from=60,level distance=9mm,sibling angle=120]
      \node (A1) [draw,circle,inner sep=1] at (120:1) {$\scriptstyle{0}$} child {node {}} child {node {2}};
      \tikzstyle{level 1}=[counterclockwise from=270,level distance=9mm,sibling angle=-120]
			\node (A2) [draw,circle,inner sep=1] at (240:1) {$\scriptstyle{2}$} child {node {$\psi$}} child {node {1}};

      \path (A0) edge [bend left=0.000000] (A1);
      \path (A0) edge [bend left=0.000000] (A2);
	\end{tikzpicture}\right])
	-(\left[	\begin{tikzpicture}[baseline,el/.style = {inner sep=2pt, align=left, sloped},every child node/.style={inner sep=1,font=\tiny}]
      \path(0,0) ellipse (2 and 2);
      \tikzstyle{level 1}=[counterclockwise from=0,level distance=9mm,sibling angle=120]
			\node (A0) [draw,circle,inner sep=1] at (0:1) {$\scriptstyle{0}$} child {node {}};
      \tikzstyle{level 1}=[counterclockwise from=60,level distance=9mm,sibling angle=120]
      \node (A1) [draw,circle,inner sep=1] at (120:1) {$\scriptstyle{0}$} child {node {}} child {node {1}};
      \tikzstyle{level 1}=[counterclockwise from=270,level distance=9mm,sibling angle=-120]
			\node (A2) [draw,circle,inner sep=1] at (240:1) {$\scriptstyle{2}$} child {node {}} child {node {2}};

      \path (A0) edge [bend left=0.000000] (A1);
      \path (A0) edge [bend left=0.000000] node[el,below,font=\tiny,pos=.8] {$\psi$} (A2);
    \end{tikzpicture}\right]
-\left[	\begin{tikzpicture}[baseline,el/.style = {inner sep=2pt, align=left, sloped},every child node/.style={inner sep=1,font=\tiny}]
      \path(0,0) ellipse (2 and 2);
      \tikzstyle{level 1}=[counterclockwise from=0,level distance=9mm,sibling angle=120]
			\node (A0) [draw,circle,inner sep=1] at (0:1) {$\scriptstyle{0}$} child {node {}};
      \tikzstyle{level 1}=[counterclockwise from=60,level distance=9mm,sibling angle=120]
      \node (A1) [draw,circle,inner sep=1] at (120:1) {$\scriptstyle{0}$} child {node {}} child {node {1}};
      \tikzstyle{level 1}=[counterclockwise from=270,level distance=9mm,sibling angle=-120]
			\node (A2) [draw,circle,inner sep=1] at (240:1) {$\scriptstyle{2}$} child {node {$\psi$}} child {node {2}};

      \path (A0) edge [bend left=0.000000] (A1);
      \path (A0) edge [bend left=0.000000] (A2);
	\end{tikzpicture}\right])
\]\[
	-(\left[	\begin{tikzpicture}[baseline,el/.style = {inner sep=2pt, align=left, sloped},every child node/.style={inner sep=1,font=\tiny}]
      \path(0,0) ellipse (2 and 2);
      \tikzstyle{level 1}=[counterclockwise from=0,level distance=9mm,sibling angle=120]
			\node (A0) [draw,circle,inner sep=1] at (0:1) {$\scriptstyle{0}$} child {node {}};
      \tikzstyle{level 1}=[counterclockwise from=60,level distance=9mm,sibling angle=120]
      \node (A1) [draw,circle,inner sep=1] at (120:1) {$\scriptstyle{0}$} child {node {}} child {node {1}};
      \tikzstyle{level 1}=[counterclockwise from=270,level distance=9mm,sibling angle=-120]
			\node (A2) [draw,circle,inner sep=1] at (240:1) {$\scriptstyle{2}$} child {node {2}} child {node {}};

      \path (A0) edge [bend left=0.000000] (A1);
      \path (A0) edge [bend left=0.000000] node[el,below,font=\tiny,pos=.8] {$\psi$} (A2);
    \end{tikzpicture}\right]
-\left[	\begin{tikzpicture}[baseline,el/.style = {inner sep=2pt, align=left, sloped},every child node/.style={inner sep=1,font=\tiny}]
      \path(0,0) ellipse (2 and 2);
      \tikzstyle{level 1}=[counterclockwise from=0,level distance=9mm,sibling angle=120]
			\node (A0) [draw,circle,inner sep=1] at (0:1) {$\scriptstyle{0}$} child {node {}};
      \tikzstyle{level 1}=[counterclockwise from=60,level distance=9mm,sibling angle=120]
      \node (A1) [draw,circle,inner sep=1] at (120:1) {$\scriptstyle{0}$} child {node {}} child {node {1}};
      \tikzstyle{level 1}=[counterclockwise from=270,level distance=9mm,sibling angle=-120]
			\node (A2) [draw,circle,inner sep=1] at (240:1) {$\scriptstyle{2}$} child {node {$2\,\psi$}} child {node {}};

      \path (A0) edge [bend left=0.000000] (A1);
      \path (A0) edge [bend left=0.000000] (A2);
	\end{tikzpicture}\right])
-\left[	\begin{tikzpicture}[baseline,el/.style = {inner sep=2pt, align=left, sloped},every child node/.style={inner sep=1,font=\tiny}]
      \path(0,0) ellipse (2 and 2);
      \tikzstyle{level 1}=[counterclockwise from=0,level distance=9mm,sibling angle=120]
			\node (A0) [draw,circle,inner sep=1] at (0:1) {$\scriptstyle{0}$} child {node {}};
      \tikzstyle{level 1}=[counterclockwise from=90,level distance=9mm,sibling angle=120]
			\node (A1) [draw,circle,inner sep=1] at (90:1) {$\scriptstyle{0}$} child {node {}};
      \tikzstyle{level 1}=[counterclockwise from=120,level distance=9mm,sibling angle=120]
      \node (A2) [draw,circle,inner sep=1] at (180:1) {$\scriptstyle{0}$} child {node {}} child {node {2}};
      \tikzstyle{level 1}=[counterclockwise from=270,level distance=9mm,sibling angle=120]
			\node (A3) [draw,circle,inner sep=1] at (270:1) {$\scriptstyle{2}$} child {node {1}};

			\path (A0) edge [bend left=0.000000] (A1);
      \path (A1) edge [bend left=0.000000] (A2);
      \path (A0) edge [bend left=0.000000] (A3);
	\end{tikzpicture}\right]
-\left[	\begin{tikzpicture}[baseline,el/.style = {inner sep=2pt, align=left, sloped},every child node/.style={inner sep=1,font=\tiny}]
      \path(0,0) ellipse (2 and 2);
      \tikzstyle{level 1}=[counterclockwise from=0,level distance=9mm,sibling angle=120]
			\node (A0) [draw,circle,inner sep=1] at (0:1) {$\scriptstyle{0}$} child {node {}};
      \tikzstyle{level 1}=[counterclockwise from=90,level distance=9mm,sibling angle=120]
			\node (A1) [draw,circle,inner sep=1] at (90:1) {$\scriptstyle{0}$} child {node {}};
      \tikzstyle{level 1}=[counterclockwise from=120,level distance=9mm,sibling angle=120]
      \node (A2) [draw,circle,inner sep=1] at (180:1) {$\scriptstyle{0}$} child {node {1}} child {node {2}};
      \tikzstyle{level 1}=[counterclockwise from=270,level distance=9mm,sibling angle=120]
			\node (A3) [draw,circle,inner sep=1] at (270:1) {$\scriptstyle{2}$} child {node {}};

			\path (A0) edge [bend left=0.000000] (A1);
      \path (A1) edge [bend left=0.000000] (A2);
      \path (A0) edge [bend left=0.000000] (A3);
	\end{tikzpicture}\right]
\]\[
+\frac{1}{3}\left[	\begin{tikzpicture}[baseline,el/.style = {inner sep=2pt, align=left, sloped},every child node/.style={inner sep=1,font=\tiny}]
      \path(0,0) ellipse (2 and 2);
      \tikzstyle{level 1}=[counterclockwise from=0,level distance=9mm,sibling angle=120]
			\node (A0) [draw,circle,inner sep=1] at (0:1) {$\scriptstyle{0}$} child {node {}};
      \tikzstyle{level 1}=[counterclockwise from=45,level distance=9mm,sibling angle=120]
			\node (A1) [draw,circle,inner sep=1] at (90:1) {$\scriptstyle{0}$} child {node {}} child {node {}};
      \tikzstyle{level 1}=[counterclockwise from=120,level distance=9mm,sibling angle=120]
      \node (A2) [draw,circle,inner sep=1] at (180:1) {$\scriptstyle{0}$} child {node {1}} child {node {2}};
			\node (A3) [draw,circle,inner sep=1] at (270:1) {$\scriptstyle{2}$};

			\path (A0) edge [bend left=0.000000] (A1);
      \path (A3) edge [bend left=0.000000] (A2);
      \path (A0) edge [bend left=0.000000] (A3);
	\end{tikzpicture}\right]
-\left[	\begin{tikzpicture}[baseline,el/.style = {inner sep=2pt, align=left, sloped},every child node/.style={inner sep=1,font=\tiny}]
      \path(0,0) ellipse (2 and 2);
      \tikzstyle{level 1}=[counterclockwise from=0,level distance=9mm,sibling angle=120]
			\node (A0) [draw,circle,inner sep=1] at (0:1) {$\scriptstyle{0}$} child {node {}};
      \tikzstyle{level 1}=[counterclockwise from=45,level distance=9mm,sibling angle=120]
			\node (A1) [draw,circle,inner sep=1] at (90:1) {$\scriptstyle{0}$} child {node {1}} child {node {}};
      \tikzstyle{level 1}=[counterclockwise from=120,level distance=9mm,sibling angle=120]
      \node (A2) [draw,circle,inner sep=1] at (180:1) {$\scriptstyle{0}$} child {node {}} child {node {2}};
			\node (A3) [draw,circle,inner sep=1] at (270:1) {$\scriptstyle{2}$};

			\path (A0) edge [bend left=0.000000] (A1);
      \path (A3) edge [bend left=0.000000] (A2);
      \path (A0) edge [bend left=0.000000] (A3);
	\end{tikzpicture}\right]
+2\left[	\begin{tikzpicture}[baseline,el/.style = {inner sep=2pt, align=left, sloped},every child node/.style={inner sep=1,font=\tiny}]
      \path(0,0) ellipse (2 and 2);
      \tikzstyle{level 1}=[counterclockwise from=0,level distance=9mm,sibling angle=120]
			\node (A0) [draw,circle,inner sep=1] at (0:1) {$\scriptstyle{0}$} child {node {}};
      \tikzstyle{level 1}=[counterclockwise from=45,level distance=9mm,sibling angle=120]
			\node (A1) [draw,circle,inner sep=1] at (90:1) {$\scriptstyle{0}$} child {node {2}} child {node {}};
      \tikzstyle{level 1}=[counterclockwise from=120,level distance=9mm,sibling angle=120]
      \node (A2) [draw,circle,inner sep=1] at (180:1) {$\scriptstyle{0}$} child {node {}} child {node {1}};
			\node (A3) [draw,circle,inner sep=1] at (270:1) {$\scriptstyle{2}$};

			\path (A0) edge [bend left=0.000000] (A1);
      \path (A3) edge [bend left=0.000000] (A2);
      \path (A0) edge [bend left=0.000000] (A3);
	\end{tikzpicture}\right]
-3\left[	\begin{tikzpicture}[baseline,el/.style = {inner sep=2pt, align=left, sloped},every child node/.style={inner sep=1,font=\tiny}]
      \path(0,0) ellipse (2 and 2);
      \tikzstyle{level 1}=[counterclockwise from=0,level distance=9mm,sibling angle=120]
			\node (A0) [draw,circle,inner sep=1] at (0:1) {$\scriptstyle{0}$} child {node {}};
      \tikzstyle{level 1}=[counterclockwise from=45,level distance=9mm,sibling angle=120]
			\node (A1) [draw,circle,inner sep=1] at (90:1) {$\scriptstyle{0}$} child {node {1}} child {node {2}};
      \tikzstyle{level 1}=[counterclockwise from=120,level distance=9mm,sibling angle=120]
      \node (A2) [draw,circle,inner sep=1] at (180:1) {$\scriptstyle{0}$} child {node {}} child {node {}};
			\node (A3) [draw,circle,inner sep=1] at (270:1) {$\scriptstyle{2}$};

			\path (A0) edge [bend left=0.000000] (A1);
      \path (A3) edge [bend left=0.000000] (A2);
      \path (A0) edge [bend left=0.000000] (A3);
	\end{tikzpicture}\right].
\]
We define
	\[f(i):=\left<\psi\omega,a\omega^i,b\omega^{l-i}\right>-2\left<a\omega^{i+1},b\omega^{l-i}\right>-2\left<a\omega^i,b\omega^{l-i+1}\right>,\]
	\[h:=\left<a b\omega^{l-1},\omega^{2}\right>-\frac{1}{3}\left<a b\omega^{l-2},\omega^{3}\right>,\]
in order to obtain
	\[\Sigma(L_{2,5})(a\omega^k,b\omega^{l-2-k},\omega,\omega,\omega) \sim f(k+2)-2 f(k+1)+f(k)-h,\]
	for $0\leq k\leq l$.
	Applying Lemma~\ref{diffeq} gives the desired formula
\end{proof}
\begin{lemma}
	\label{nonprimitivereconstruction}
	Let $a,b,c\in H^{\leq2}(X)$, the following holds for all $i,j,k\geq0$
		\[\left<\psi a\omega^i,b\omega^j,c\omega^k\right>\sim \text{\emph{a linear combination of 2 pointed primary invariants}}.\]
\end{lemma}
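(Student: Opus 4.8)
The plan is to strip every $\omega$ off the invariant $\left<\psi a\omega^{i},b\omega^{j},c\omega^{k}\right>$ by the difference-equation method of Lemmas~\ref{243lemma} and~\ref{psiomega}, in two stages: first the power on the descendant point, then the powers on the other two points, finally using the divisor equation to drop from three marked points to two. The guiding bookkeeping is a symbol-degree count. All the identities in play are homogeneous of symbol degree $3-1=2$, and at that order the only admissible invariants are three-pointed single descendants (the target type), the three-pointed $\psi\omega$-invariants already treated in Lemma~\ref{psiomega}, and two-pointed primaries; a two-pointed descendant has symbol degree $1$ and a three-pointed primary has symbol degree $3$, so neither interferes. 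This tells me in advance exactly which invariants to expect in each inhomogeneous term.

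For the first stage I would fix $l:=i+j$, set $f(m):=\left<\psi a\omega^{m},b\omega^{l-m},c\omega^{k}\right>$, and produce from a genus-two relation evaluated on $a\omega^{\bullet},b\omega^{\bullet},c\omega^{k},\omega,\omega$ a recursion $f(m+2)-2f(m+1)+f(m)-h\sim 0$; one economical way to get it is to pull back the two-pointed identity of Lemma~\ref{243lemma} and insert $c\omega^{k}$ in the sense of Definition~\ref{pbsymbdef}. Lemma~\ref{diffeq} then expresses $f(i)$ through $f(0)$, $f(l)$, and $h$. The boundary value $f(l)=\left<\psi a\omega^{l},b,c\omega^{k}\right>$ collapses to the primary $\left<ab\omega^{l},c\omega^{k}\right>$ by the divisor equation (Lemma~\ref{genstring}), the $b$-point being plain, while $h$ combines $\psi\omega$-invariants, turned into primaries by Lemma~\ref{psiomega}, with $\psi\omega^{2}$-type three-pointed invariants. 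The latter I would dispose of exactly as in the two-pointed case: substituting $a\to\omega$, $i\to1$ into the same recursion makes $\left<\psi\omega^{2},\ldots\right>$ appear with a nonvanishing coefficient, so one solves for it in terms of $\psi\omega$-invariants and primaries — a three-pointed counterpart of Corollary~\ref{psiomega2}. Throughout I would invoke the hypothesis $P^{i}(X)=0$ for $i>2$ to rewrite products such as $ab\omega^{\bullet}$ as a class of degree $\le 2$ times a power of $\omega$, keeping every descendant argument a legitimate input for Lemmas~\ref{243lemma}, \ref{psiomega2}, \ref{psiomega}.

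The first stage leaves the term $f(0)=\left<\psi a,b\omega^{l},c\omega^{k}\right>$, in which the descendant point carries no $\omega$. I would remove it with a second difference equation that redistributes the powers between the $b$- and $c$-points: with $l':=l+k$ and $g(m):=\left<\psi a,b\omega^{m},c\omega^{l'-m}\right>$ a further genus-two relation gives $g(m+2)-2g(m+1)+g(m)-h'\sim 0$, where $h'$ is again reducible by Lemma~\ref{psiomega} and to primaries. The decisive gain is that the descendant point no longer obstructs the divisor equation, so \emph{both} boundary values are two-pointed primaries, $g(0)=\left<\psi a,b,c\omega^{l'}\right>\sim\left<ab,c\omega^{l'}\right>$ and $g(l')=\left<\psi a,b\omega^{l'},c\right>\sim\left<ac,b\omega^{l'}\right>$ by Lemma~\ref{genstring}. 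Since $f(0)=g(l)$, Lemma~\ref{diffeq} writes $f(0)$, and hence the original invariant, as a combination of two-pointed primaries.

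The hard part will be sourcing the relations rather than running the difference equations. I must locate genus-two relations (in $R^{3}_{2,5}$ and an analogue for the second stage) whose primitive parts, after evaluation on the chosen $\omega$-insertions and one pass of Lemma~\ref{genstring}, leave inhomogeneous terms built only from $\psi\omega$-invariants, two-pointed primaries, and the solvable $\psi\omega^{2}$-type terms — with no stray three-pointed descendant that resists reduction and no symbol-degree-$3$ primaries. If I generate the first stage by pulling back Lemma~\ref{243lemma}, I must additionally check that the corrections coming from $\pi^{*}\psi=\psi-D$ feed, through Lemma~\ref{genstring}, into these already-reducible classes and do not recreate an irreducible descendant. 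Verifying that the relations produced by Pixton's machinery have exactly this shape, and that the three-pointed analogue of Corollary~\ref{psiomega2} is genuinely solvable so the reduction terminates, is where the real work lies; the symbol-degree count only guarantees that such a clean outcome is possible in principle.
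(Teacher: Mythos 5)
Your Stage 1 is essentially the paper's first move: pulling back Lemma~\ref{243lemma} and inserting $c\omega^k$, together with the pulled-back Corollary~\ref{psiomega2} and Lemma~\ref{psiomega} for the inhomogeneous terms, yields $(i+j)\left<\psi a\omega^i,b\omega^j,c\omega^k\right>\sim\sim j\left<\psi a,b\omega^{i+j},c\omega^k\right>$ (writing $\sim\sim$ for equivalence modulo $\sim$ and $2$-pointed primaries), and your count of which invariant types can occur at this symbol degree is correct. The gap is Stage 2. You need a second relation whose symbol produces the difference equation $g(m+2)-2g(m+1)+g(m)\sim\sim h'$ for $g(m)=\left<\psi a,b\omega^m,c\omega^{l'-m}\right>$, and you concede that locating such a relation and verifying its shape is ``where the real work lies''. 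That concession is exactly the missing content: in this problem the entire substance of the lemma is that a relation with the required symbol exists and can be exhibited, and a symbol-degree count cannot supply it. As written, your argument terminates in an unverified existence claim.

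The paper closes this gap without any new relation, by two moves absent from your proposal. First, the Stage 1 formula is iterated: using supercommutativity to permute the insertions and applying the formula both forwards and backwards, all the degree is concentrated at the descendant point, giving $\left<\psi a\omega^i,b\omega^j,c\omega^k\right>\sim\sim jk\left<\psi a\omega^{i+j+k-2},b\omega,c\omega\right>$ --- a step that costs nothing (no new relation) and reduces the whole problem to a single one-parameter family. Second, the same relation $L_{2,5}$ already in play through Lemma~\ref{psiomega} is re-evaluated with permuted insertions, $\Sigma(L_{2,5})(b\omega^i,c,a,\omega,\omega)$, so that $a$ lands on the genus-$2$ vertex; collapsing its symbol through the concentration formula gives $(2i-2(i+1))\left<\psi a\omega^i,b\omega,c\omega\right>=-2\left<\psi a\omega^i,b\omega,c\omega\right>$ modulo reducible terms, which kills the remaining family. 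Your Stage 2 is in fact plausibly completable by the same trick --- the paper's final computation is precisely the boundary case of your difference equation, where the term $\left<\psi a,b\omega^{i+2},c\right>$ dies by the divisor equation --- but the insight that $L_{2,5}$ itself, re-evaluated, supplies the missing equation is exactly what your proposal lacks, and without it (or an explicit new relation) the proof does not close.
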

\begin{proof}
	Let us define the equivalence relation $\sim\sim$ by saying that a linear combination of Gromov-Witten invariants is equivalent to zero if it is equivalent to zero for the $\sim$ equivalence relation or if it is a linear combination of 2-pointed primary invariants.

	By Lemma~\ref{psiomega} we have
\[\left<\psi\omega,a\omega^i,b\omega^{j}\right>\sim\sim 0.\]
	Now pulling back Corollary~\ref{psiomega2} and inserting $\gamma$ gives
	\[\left<\psi\omega^{2},b\omega^{j},\gamma\right>\sim\sim0.\]
	By the hypothesis on $X$ we can rewrite $ab\omega^{i+j-2}$ as $a'\omega^{k'}$ for some $a'\in H^{\leq2}(X)$, $k'\in\mathbb{Z}_{\geq0}$.
	So we can apply the above formulas to the pullback of Lemma~\ref{243lemma} (where we insert $\gamma$) to obtain
	\[(i+j)\left<\psi a\omega^i,b\omega^j,\gamma\right>\sim\sim j\left<\psi a,b\omega^{i+j},\gamma\right>.\]
	Repeatedly applying this formula gives
	\begin{equation}
		\label{psia}
	\left<\psi a\omega^{i},b\omega^{j},c\omega^k\right>\sim\sim jk\left<\psi a\omega^{i+j+k-2},b\omega,c\omega\right>.
	\end{equation}
	So it is sufficient to proof that
	\[\left<\psi a\omega^i,b\omega,c\omega\right>\sim\sim0\]
	We apply (\ref{psia}) to $L_{2,5}$, the relation from the proof of Lemma~\ref{psiomega}, to obtain
	\begin{align*}
		(-1)^{|a|(|b|+|c|)}\Sigma(L_{2,5})(b\omega^i,c,a,\omega,\omega) \sim\sim & \left<\psi a,b\omega^{i},c\omega^2\right> -2\left<\psi a,b\omega^{i+1},c\omega\right> \\
	\sim\sim &
	(2i-2(i+1))\left<\psi a\omega^i,b\omega,c\omega\right>.
\end{align*}
\end{proof}

What is left is to find an expression for primary invariants.
	Using our computer program we find a symmetric relation $L_{2,6}\in R^3_{2,6}$ that has the following primitive part:
\[\left[
			\begin{tikzpicture}[baseline,every node/.style={draw,circle,inner sep=1}]
      \path(0,0) ellipse (2 and 2);
      \node (A0) at (0:1) {$\scriptstyle{0}$};
      \tikzstyle{level 1}=[counterclockwise from=30,level distance=9mm,sibling angle=120]
      \node (A1) at (90:1) {$\scriptstyle{0}$} child child;
      \tikzstyle{level 1}=[counterclockwise from=120,level distance=9mm,sibling angle=120]
      \node (A2) at (180:1) {$\scriptstyle{0}$} child child;
      \tikzstyle{level 1}=[counterclockwise from=210,level distance=9mm,sibling angle=120]
      \node (A3) at (270:1) {$\scriptstyle{2}$} child child;

      \path (A0) edge [bend left=0.000000] (A1);
      \path (A0) edge [bend left=0.000000] (A2);
      \path (A0) edge [bend left=0.000000] (A3);
    \end{tikzpicture}
\right]-2\left[
			\begin{tikzpicture}[baseline,every node/.style={draw,circle,inner sep=1}]
      \path(0,0) ellipse (2 and 2);
      \tikzstyle{level 1}=[counterclockwise from=0,level distance=9mm,sibling angle=0]
      \node (A0) at (0:1) {$\scriptstyle{0}$} child;
      \tikzstyle{level 1}=[counterclockwise from=30,level distance=9mm,sibling angle=120]
      \node (A1) at (90:1) {$\scriptstyle{0}$} child child;
      \tikzstyle{level 1}=[counterclockwise from=120,level distance=9mm,sibling angle=120]
      \node (A2) at (180:1) {$\scriptstyle{0}$} child child;
      \tikzstyle{level 1}=[counterclockwise from=270,level distance=9mm,sibling angle=0]
      \node (A3) at (270:1) {$\scriptstyle{2}$} child;

      \path (A0) edge [bend left=0.000000] (A1);
      \path (A0) edge [bend left=0.000000] (A3);
      \path (A2) edge [bend left=0.000000] (A3);
    \end{tikzpicture}
\right]+\left[
			\begin{tikzpicture}[baseline,every node/.style={draw,circle,inner sep=1}]
      \path(0,0) ellipse (2 and 2);
      \tikzstyle{level 1}=[counterclockwise from=-60,level distance=9mm,sibling angle=120]
      \node (A0) at (0:1) {$\scriptstyle{0}$} child child;
      \tikzstyle{level 1}=[counterclockwise from=30,level distance=9mm,sibling angle=120]
      \node (A1) at (90:1) {$\scriptstyle{0}$} child child;
      \tikzstyle{level 1}=[counterclockwise from=120,level distance=9mm,sibling angle=120]
      \node (A2) at (180:1) {$\scriptstyle{0}$} child child;
      \node (A3) at (270:1) {$\scriptstyle{2}$};

      \path (A0) edge [bend left=0.000000] (A3);
      \path (A1) edge [bend left=0.000000] (A3);
      \path (A2) edge [bend left=0.000000] (A3);
    \end{tikzpicture}
\right].\]
We write \gls{Phi} for the system of equations
\begin{equation}
	\label{Phidef}
\Big\{\Sigma(L_{2,6})(\gamma_1\omega^{k_1},\gamma_2\omega^{k_2},\gamma_3\omega^{k_3},\omega^{k_4+1},\omega^{k_5+1},\omega^{k_6+1})\sim0\Big\}_{k_1+k_2+k_3+k_4+k_5+k_6=k}.
\end{equation}
	Since the degree $k$ will be split over 3 points rather than 2, we can no longer use a simple difference equation to find a general expression.
	We have an infinite series of matrices $\Phi_k(a,b,c)$ for $a,b,c\in H^{\leq2}(X)$.
	However, we can reduce to the case where all of the degree is concentrated in the first point, i.e. invariants of the form
	\begin{equation*}
		\left<a  \omega^i,b  \omega,c\omega\right>
	\end{equation*}
	for $i\geq0$.
	This means we will only need to consider $\Phi_k(a\omega^j,b,c)$ where $j\geq0$ is an unspecified variable and $k$ is small.

	We take Equation (\ref{psiomega2}) and divide it by $(j+1)(j+2)$.
	We also pull it back and insert the cohomology class $b\omega^k$.
	Finally we apply Lemma~\ref{psiomega} to obtain a sum of primary invariants on the right-hand side.
	\begin{align*}
	\left<\psi \omega^{2},a\omega^{j},b\omega^k\right>
	\sim &
	\left<a\omega^{j},b\omega^{k+2}\right>
	-\frac{3j^2+3j-2}{(j+1)(j+2)}\left<a \omega^{j+2},b\omega^k\right>
	\\ &
	+\frac{2j}{j+1}\left<\psi\omega,a \omega^{j+1},b\omega^k\right>
	-\frac{2j}{j+1}\left<a \omega^{j+1},b\omega^{k+1}\right>
	\\ \sim &
	\left<a\omega^{j},b\omega^{k+2}\right>
	+\frac{j^2+5j+2}{(j+1)(j+2)}\left<a \omega^{j+2},b\omega^k\right>
	\\ &
	+\frac{2j}{j+1}\left<a \omega^{j+1},b\omega^{k+1}\right>
	-\frac{2j}{j+k+1}\left<a\omega^{j+k+1},b\omega\right>
	\\ &
	-\frac{2jk}{(j+1)(j+k+1)}\left<a\omega,b\omega^{j+k+1}\right>
		\\ &
		-\frac{jk}{3}\Big(
			3\left<a b\omega^{j+k},\omega^{2}\right>-\left<a b\omega^{j+k-1},\omega^{3}\right>
	\Big)
	.
	\end{align*}
	We swap the 2nd and 3rd point (i.e. we swap $a\omega^i$ and $b\omega^j$) and subtract the resulting formula.
	\begin{align}
		\label{Psirel}
		\begin{split}
		0  \sim&
	-\frac{2k}{(k+1)(k+2)}\left<a\omega^{j},b\omega^{k+2}\right>
	+\frac{2j}{(j+1)(j+2)}\left<a\omega^{j+2},b\omega^{k}\right>
	\\ &
	+\frac{2(j-k)}{(j+1)(k+1)}\left<a \omega^{j+1},b\omega^{k+1}\right>
	-\frac{2j}{(k+1)(j+k+1)}\left<a\omega^{j+k+1},b\omega\right>
	\\ &
	+\frac{2k}{(j+1)(j+k+1)}\left<a\omega,b\omega^{j+k+1}\right>
	.
	\end{split}
	\end{align}
	Let $\Psi_{j,k}(a,b)$ denote the right-hand side of (\ref{Psirel}).
	We define
	\[\text{\gls{Theta}}:=(j+2)(k+1)\Big((j+1)\Psi_{j,k}(a,b)-\frac{k(j+2)}{(k-1)}\Psi_{j+1,k-1}(a,b)\Big).\]
	We have
	\begin{align*}
		\Theta_{j,k}(a,b)  =&
		-\frac{2k(j+1)(j+2)}{k+2}\left<a\omega^{j},b\omega^{k+2}\right>
										 \\ &
				 +\frac{2(jk-k^2+k+2)(j+2)}{k-1}\left<a \omega^{j+1},b\omega^{k+1}\right>
	\\ &
	-\frac{2(j^2-2jk+3j-2k)(k+1)}{k-1}\left<a\omega^{j+2},b\omega^{k}\right>
	\\ &
	-\frac{2k(j+1)(j+2)(k+1)}{(k-1)(j+3)}\left<a\omega^{j+3},b\omega^{k-1}\right>
	\\ &
	+\frac{4(j+1)(j+2)}{k-1}\left<a\omega^{j+k+2},b\omega\right>
	.
	\end{align*}

\begin{lemma}
	\label{primitivereconstruction}
	Let $a,b,c\in H^{\leq2}(X)$, the following holds for all $i,j,k\geq0$
	\begin{equation*}
		\left<a  \omega^i,b  \omega^j,c\omega^k\right>\sim0
	\end{equation*}
\end{lemma}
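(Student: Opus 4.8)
The plan is to solve for the three-pointed primary invariant $\left<a\omega^i,b\omega^j,c\omega^k\right>$ using the symbol of $L_{2,6}$, i.e.\ the system $\Phi_k$ from (\ref{Phidef}), in two stages mirroring the genus~$1$ proof and Lemma~\ref{nonprimitivereconstruction}: first concentrate all of the $\omega$-degree onto a single marked point, and then annihilate the resulting one-parameter family. I begin by disposing of the base cases. If any of $i,j,k$ is zero, then Lemma~\ref{genstring} rewrites the invariant as a combination of two-pointed invariants; as an element of $\mathrm{G}^X_{2,\beta}$ the original three-pointed invariant is homogeneous of degree~$3$, whereas a two-pointed primary invariant has degree~$2$, so all such terms are $\sim 0$ by definition. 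Hence I may assume $i,j,k\geq 1$.

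Next comes the concentration step. Each primitive stratum of $L_{2,6}$ has a single genus~$2$ vertex attached to a rational tail of genus~$0$ vertices, so its symbol contracts the tail via Lemma~\ref{m03} and produces three-pointed genus~$2$ invariants whose insertions are cup products of the six chosen classes. Feeding in $\Phi_k(a\omega^{m},b,c)$ with $b,c$ carrying only small powers of $\omega$, and applying the String and Divisor Equations of Lemma~\ref{genstring} to the three extra $\omega$-legs, yields homogeneous linear relations among the invariants $\left<a\omega^{\ast},b\omega^{\ast},c\omega^{\ast}\right>$ of the given total degree, with all non-concentrated and degree-splitting contributions dropped (these lower $\beta$ and are therefore $\sim 0$). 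By the same telescoping that gave (\ref{psia}) in Lemma~\ref{nonprimitivereconstruction}, iterating these relations trades spread-out degree for concentrated degree, so that, modulo terms that are $\sim 0$,
\[\left<a\omega^i,b\omega^j,c\omega^k\right>\sim(\text{a nonzero rational coefficient})\cdot\left<a\omega^{\,i+j+k-2},b\omega,c\omega\right>.\]
Thus it suffices to annihilate the one-parameter family $f(m):=\left<a\omega^{m},b\omega,c\omega\right>$.

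To kill $f$, I would use the instances $\Phi_k(a\omega^{m},b,c)$ with $k$ small and $m$ an unspecified parameter. Because the two trailing points are pinned at exponent one, these collapse to a low-order recursion in $m$ whose inhomogeneous terms are two-pointed primary invariants $\left<a\omega^{p},b\omega^{q}\right>$ together with invariants of the shape $\left<ab\omega^{\ast},\omega^{2}\right>$ and $\left<ab\omega^{\ast},\omega^{3}\right>$. This is exactly where the two-pointed relations $\Psi_{j,k}$ of (\ref{Psirel}) and their combination $\Theta_{j,k}$ enter: they rewrite the two-pointed invariants that appear into the standard concentrated form $\left<a\omega^{\,j+k+2},b\omega\right>$ (the last term of $\Theta_{j,k}$), which closes the recursion. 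Since every two-pointed invariant occurring has $\sim$-degree $2<3$ and is hence $\sim 0$, and since the base cases $f(0)$ (and, if a second-order recursion is obtained, $f(1)$) are $\sim 0$ by Lemma~\ref{genstring}, the recursion forces $f(m)\sim 0$ for all $m$. Together with the concentration step this proves the claim for three points, and pulling back and inserting further classes $\gamma_4,\ldots,\gamma_n$ extends it to all higher numbers of points exactly as in Theorems~\ref{g0reconstruction} and~\ref{g1reconstruction}.

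The main obstacle is the linear algebra underlying the second and third steps: one must verify that the finitely many small-$k$ instances of $\Phi_k(a\omega^{m},b,c)$, after substitution of the explicit $\Theta_{j,k}$ identities, really assemble into a \emph{nondegenerate} recursion — that the coefficient multiplying $\left<a\omega^{\,i+j+k-2},b\omega,c\omega\right>$ in the concentration step is genuinely nonzero, and that the operator acting on $f$ has a nonvanishing leading coefficient so that one can solve rather than obtain a vacuous identity. This entails tracking the explicit rational functions of $i,j,k$ produced by the symbol and by $\Theta_{j,k}$, clearing its denominators $k-1$ and $j+3$ (treating the exceptional indices $k=1$ and small $k$ separately by direct application of Corollary~\ref{psiomega2} and Lemma~\ref{psiomega}), and checking that no accidental cancellation degenerates the system. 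The computation is finite but delicate and is best confirmed with the accompanying computer program; conceptually it is the genus~$2$, three-point analogue of the rank check that made the genus~$1$ difference equation of Lemma~\ref{diffeq} solvable.
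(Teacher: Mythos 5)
Your overall strategy (symbols of $L_{2,6}$ via the systems $\Phi_k$, the two-pointed relations $\Psi_{j,k}$ and $\Theta_{j,k}$, linear algebra deferred to the computer) points in the right direction, but two of your steps have genuine gaps. First, the concentration step is unjustified: you claim that iterating $\Phi_k(a\omega^m,b,c)$ telescopes, ``by the same telescoping that gave (\ref{psia})'', down to $\left<a\omega^{i+j+k-2},b\omega,c\omega\right>$ with a nonzero coefficient. But (\ref{psia}) came from Lemma~\ref{243lemma}, i.e.\ from $L_{2,4}$ applied to descendant invariants, where fixing $l=i+j$ produces a genuine one-variable difference equation. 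For primary three-pointed invariants the unknowns of fixed total degree form a two-parameter family, and the paper states exactly this obstruction: ``Since the degree $k$ will be split over 3 points rather than 2, we can no longer use a simple difference equation.'' The reduction actually used is different: one pulls back the two-pointed relations $\Theta_{j,k}(a,b)$ by inserting a third class; since the leading term $\left<a\omega^{j},b\omega^{k+2}\right>$ of $\Theta_{j,k}$ has nonzero coefficient $-2k(j+1)(j+2)/(k+2)$, this lowers the exponents on the second and third points to at most $2$, after which finitely many explicit systems are solved. Your use of $\Theta$, by contrast, is to rewrite two-pointed inhomogeneous terms --- which is redundant, since (as you note yourself) two-pointed primary invariants have $\sim$-degree $2<3$ and are negligible outright; the essential, pulled-back, three-pointed role of $\Theta$ is absent from your argument.

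Second, and more seriously, your claim that the $\Phi_k$ equations are relations ``among the invariants $\left<a\omega^{\ast},b\omega^{\ast},c\omega^{\ast}\right>$'' with all remaining contributions negligible because ``these lower $\beta$'' is false. The symbol of $L_{2,6}$ also contains cross terms in which several of the six insertions are cupped together at a single slot --- e.g.\ $\left<ab\,\omega^{\ast},c\omega^{\ast},\omega^{\ast}\right>$ and $\left<abc\,\omega^{\ast},\omega^{\ast},\omega^{\ast}\right>$, coming from the strata whose rational trees join several legs before meeting the genus-$2$ vertex. These carry the full degree $\beta$, full genus, and three points, so they are not $\sim 0$; after rewriting $ab=a'\omega^{k'}$ using the hypothesis on $X$, they become invariants with one or two pure $\omega$-power insertions, which must be killed \emph{first}. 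This is exactly why the paper's proof is staged: a full-rank system proving $\left<\gamma,\omega^i,\omega^j\right>\sim0$, then one proving $\left<\gamma_1,\gamma_2,\omega^k\right>\sim0$, and only then the general case, each stage feeding the next. Without this staging, and without discharging the rank checks you explicitly defer, your recursion for $f(m)$ is not closed and the argument does not go through.
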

\begin{proof}
	Using $\Theta$ we can always reduce to the case $j,k\leq 2$.

	We write $\Theta_{j,k}(a,b)\left<\gamma\right>$ to denote the pullback of $\Theta_{j,k}(a,b)$ where we insert the cohomology class $\gamma$.

The rest of the proof consists of proving that certain systems of equations are full rank.
We only list the equations here.
The calculation of these symbols and checking if the systems are full rank is done by our computer program.
A description of the outputs of the computer program can be found at the web adress given in the introduction.

	The relations
	\[\Phi_1(a\omega^i,\omega,\omega),\quad
	\Theta_{1,2}(1,\omega)\left<a\omega^i\right>,\quad\Theta_{i,2}(a,1)\left<\omega^2\right>.\]
	make up a full rank system of 4 unique equations in 4 variables, which proves that
	\[\left<a\omega^{i+2},\omega^2,\omega^2\right>\sim\left<a\omega^{i+1},\omega^3,\omega^2\right>\sim\left<a\omega^{i},\omega^3,\omega^3\right>\sim0,\]
	for $i\geq0$.
	Coupled with the one relation in $\Phi_0(a,\omega,\omega)$,
	\[\Sigma(L_{2,6})(a,\omega,\omega,\omega,\omega,\omega)\sim60\left<a\omega,\omega^2,\omega^2\right>\sim0,\]
	this proves that $\left<\gamma,\omega^i,\omega^j\right>\sim0$ for all $\gamma\in H^*(X)$, $i,j\geq0$.
	For the remainder of this proof we will set all invariants of this form to zero.

	Consider the equations
	\[\Phi_0(a,b,\omega),\quad\Phi_1(a,b,\omega),\quad\Phi_2(a\omega^i,b,\omega),\]
	\[(-1)^{|a||b|}\Theta_{1,2}(\omega,b)\left<a\omega^i\right>,\quad
	\Theta_{i+1,2}(a,1)\left<b\omega\right>,\quad\Theta_{i,2}(a,1)\left<b\omega^2\right>.\]
	Together these equations imply
	$\left<a\omega^{i},b\omega^j,\omega^k\right>\sim0,$
	for $i\geq0$, $0\leq j\leq 2$, and $0\leq k\leq3$.
	this proves that $\left<\gamma_1,\gamma_2,\omega^i\right>\sim0$ for all $\gamma_1,\gamma_2\in H^*(X)$, $i\geq0$.
	For the remainder of this proof we will set all invariants of this form to zero.

	Consider the equations
	\[\Phi_0(a,b,c),\quad\Phi_1(a,b,c),\quad\Phi_2(a\omega^i,b,c),\]
	\[\Theta_{i,2}(a,b)\left<c\omega\right>,\quad
	(-1)^{|b||c|}\Theta_{i,2}(a,c)\left<b\omega\right>.\]
	Together these equations imply
	$\left<a\omega^{i},b\omega^j,c\omega^k\right>\sim0,$
	for $i\geq0$, $0\leq j\leq 2$, and $0\leq k\leq2$.
	\end{proof}

\subsection{Tautological relations in genus 2}
In this chapter we describe how the relations we use can be expressed in terms of \emph{new} relations.
Many reconstructions for Gromov-Witten invariants such as those by Getzler and Belorousski-Pandharipande were the result of a \emph{new} relation.
The notion of a new relation was formally introduced by Pixton in \cite{pixtonconjrels} as follows.
\begin{definition}
	Let $R^\mathrm{old}$ consist of $S_{g,n}^{>0}\cdot R_{g',n'}$, $q_*(R_{g,n}\otimes R_{g',n'})$, and $\pi^*(R_{g,n})$ for all natural gluing and forgetful maps $q$ and $\pi$ and any $g,n,g',n'\in\mathbb{Z}_{\geq0}$.
	We define $\text{\gls{newrels}}:=R_{g,n}/(R_{g,n}\cap R^\mathrm{old})$ to be the space of \emph{new} relations.
\end{definition}

From Proposition 2 in \cite{pixtonconjrels} we obtain the following.
\begin{lemma}
	Let $g>0$ and let $[R]\in P_{g,n}\cap R^\mathrm{new}_{g,n}$, then $[R]$ has a representative $R'\in P_{g,n}$ that is symmetric, i.e. $R'$ is fixed by the action of $\mathfrak{S}_n$ permuting the points.
\end{lemma}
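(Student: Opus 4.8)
Write $R\in P_{g,n}$ for a representative of the class $[R]\in R^\mathrm{new}_{g,n}$. The plan is to produce the symmetric representative by averaging over $\mathfrak{S}_n$. Set
\[
R' := \frac{1}{n!}\sum_{\sigma\in\mathfrak{S}_n}\sigma\cdot R,
\]
which is visibly fixed by $\mathfrak{S}_n$. The lemma then reduces to two claims: (i) $R'\in P_{g,n}$, and (ii) $R'$ and $R$ have the same image in $R^\mathrm{new}_{g,n}$, that is, $R'-R\in R^\mathrm{old}$. Granting both, $R'$ is the desired symmetric representative of $[R]$ lying in $P_{g,n}$.

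For (i) I would use that $P_{g,n}$ is stable under the $\mathfrak{S}_n$-action. Pixton's relations are built symmetrically from the $3$-spin construction: relabelling the markings merely permutes the weight parameters attached to the legs and the decorated stable graphs occurring in the formula, so $\sigma\cdot P_{g,n}=P_{g,n}$. Since $P_{g,n}$ is a $\mathbb{Q}$-subspace, the average $R'$ again lies in $P_{g,n}$.

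Claim (ii) is the heart of the matter and amounts to showing that $\mathfrak{S}_n$ acts trivially on $R^\mathrm{new}_{g,n}$. Because $R^\mathrm{old}$ is defined using \emph{all} forgetful and gluing maps together with multiplication by $S^{>0}_{g,n}$, relabelling permutes these operations among themselves and therefore preserves $R^\mathrm{old}$; hence $\mathfrak{S}_n$ acts on the quotient, and the point is that the action is trivial. As transpositions generate $\mathfrak{S}_n$ it suffices to treat $\tau=(i\,j)$ and to show $\eta:=\tau R-R\in R^\mathrm{old}$. The geometric input is that $\eta$ restricts to zero on the boundary divisor $D_{ij}\subset\Mbstack_{g,n}$ where $i$ and $j$ sit on a three-pointed rational tail: swapping the two legs of that $\mathbb{P}^1$ is covered by an automorphism, so $R$ and $\tau R$ have equal restrictions to $D_{ij}$. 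Identifying $D_{ij}$ with the image of the section $\sigma_i$ of the forgetful map $\pi_j\colon\Mbstack_{g,n}\to\Mbstack_{g,n-1}$, this reads $\sigma_i^*\eta=0$, and $\eta$ is moreover $\tau$-anti-invariant.

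The hard part is converting this vanishing into membership in $R^\mathrm{old}$, and this is the step I expect to be the main obstacle. I would compare $\eta$ with $\pi_j^*\pi_{j\,*}\eta\in\pi_j^*(R_{g,n-1})\subseteq R^\mathrm{old}$, noting that $\pi_{j\,*}$ carries relations to relations. By the forgetful-map comparison formulas (as in Lemma~\ref{pullbackcurves}) the error $\eta-\pi_j^*\pi_{j\,*}\eta$ is supported on the boundary divisors cut out by $\pi_j$, where the restriction of $\eta$ is itself a relation, so it is expressible through gluing pushforwards $q_*$ of lower relations; the vanishing $\sigma_i^*\eta=0$ removes the section contribution. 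Together these exhibit $\eta$ as a sum of forgetful pullbacks and boundary pushforwards of tautological relations, that is, as an element of $R^\mathrm{old}$. This boundary-support argument is precisely the content of Proposition~2 in \cite{pixtonconjrels}.
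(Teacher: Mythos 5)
Your proposal takes essentially the same route as the paper: the paper's entire proof is the citation of Proposition~2 of \cite{pixtonconjrels}, which is precisely the key fact your argument reduces to (triviality of the $\mathfrak{S}_n$-action on $R^{\mathrm{new}}_{g,n}$ for $g>0$), combined with averaging over the $\mathfrak{S}_n$-stable space $P_{g,n}$. Your appended sketch of that proposition's proof is not needed and is shaky as written --- $\pi_j^*\pi_{j\,*}\eta$ has the wrong degree to compare with $\eta$, and the argument never invokes $g>0$, without which the claim is false (e.g.\ the WDVV relation in $R^1_{0,4}$ is anti-invariant under a transposition yet new) --- but since you, like the paper, ultimately rest that step on the citation, the proof stands.
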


There are new relations in genus two that express $\kappa$-classes and degree two monomials $\psi$-classes in terms of decorated strata classes with at most one $\psi$-class.
Besides these relations the only known new relations are in $R_{2,3}^2$ and $R_{2,6}^3$.
The new relation in $R_{2,3}^2$ was first discovered by Belorousski and Pandharipande in \cite{belpandg2}.
The relation $L_\mathrm{BP}$ that they present is the unique representative (up to scalars) of the new relation in $R_{2,3}^2$.

We use the 3 relations $L_{2,i}$ for $4\leq i\leq 6$ in our proof of \ref{g2reconstruction}.
We have
\[L_\mathrm{BP}={\pi_2}_* L_{2,4},\]
and the relation $L_{2,6}$ represents the new relation in $R_{2,6}^3$.
The relations $L_{2,4}$ and $L_{2,5}$ are not new and we can recover them from $L_\mathrm{BP}$.

	For any $0\leq i\leq 4$, the relation $\pi_1^*(L_\mathrm{BP})\cdot\psi_i$ has terms with genus~$2$ components that have a degree 2 monomial in $\psi$-classes.
	We can express these terms using decorated strata classes that have at most one $\psi$-class.
	Modulo these simplifications we have for the primitive part,
	and also modulo pullbacks along gluing morphisms of the new relation in $R^1(\Mbstack_{0,4})$ we have
	\[2L_{2,4}=	\pi_1^*(L_\mathrm{BP})(-\psi_1+3\psi_2+\psi_3+\psi_4).\]
		Similarly, for $L_{2,5}$ we have
	\[6L_{2,5}=
			\pi_{1,2}^*(L_\mathrm{BP})(3\psi_1-4\psi_2-9\psi_4)
	+		\pi_{1,4}^*(L_\mathrm{BP})\cdot(-7\psi_1+28\psi_2+4\psi_4-16\psi_5)
\]\[
	+		\pi_{2,4}^*(L_\mathrm{BP})\cdot(-3\psi_4+12\psi_5)
+3	\pi_{4,5}^*(L_\mathrm{BP})\cdot\psi_4
.\]
Note that these two formulas give specific representatives in equivalence classes of relations that have the same symbol.
\begin{remark}
	The restriction of the relation $L_{2,6}$ to the moduli space of stable curves with only rational tails was found by Tavakol in \cite{tavakolg2}.
	For our purposes, it is important to know that this relation extends to a \emph{tautological} relation on all of $\Mbstack_{2,6}$.
	This follows immediately when we obtain it as a Pixton relations.
\end{remark}

\section{Genus 2 Gromov-Witten invariants of blowups of the projective plane}

In this chapter we calculate descendant Gromov-Witten invariants of \gls{Xr}, the blowup of $\mathbb{P}^2$ at points $p_1,\ldots,p_r$ in general position.
Although we do not use Theorem \ref{g2reconstruction}, we do use the relation $L_{2,4}\in R^3_{2,4}$ from the proof of the theorem.
\begin{theorem}
	\label{p2blowupcomputation}
	The algorithm described in Sections \ref{dpg0rec}, \ref{dpg1rec}, and \ref{dpg2rec} reconstructs all genus~$0$,$1$, and $2$ Gromov-Witten invariants with descendants of $X_r$ from the finitely many initial cases in Lemma~\ref{DPbasecases} and $<pt^2>^{X_r}_{0,H}=1$.
\end{theorem}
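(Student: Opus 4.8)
The plan is to prove that the three recursive procedures of Sections~\ref{dpg0rec}, \ref{dpg1rec}, and~\ref{dpg2rec} terminate and are valid, that is, that they express every genus~$\leq 2$ descendant invariant of $X_r$ in terms of $<pt^2>^{X_r}_{0,H}=1$ and the finitely many invariants of Lemma~\ref{DPbasecases}. The starting observation is that $H^*(X_r)$ is generated by divisors: a homogeneous basis is $1,H,E_1,\ldots,E_r,pt$, with $pt=H\cup H$, $E_i\cup E_j=-\delta_{ij}\,pt$, and $H\cup E_i=0$; in particular $P^i(X_r)=0$ for $i>2$, so the hypotheses of Theorems~\ref{g0reconstruction}, \ref{g1reconstruction}, and~\ref{g2reconstruction} all hold. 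I would order the invariants by the well-founded order implicit in $\sim$: first by genus, then by the effective-cycle partial order on $\beta$, and finally by the symbol degree $n-\sum_i k_i$. Fixing an ample $\omega$, a step $\beta'<\beta$ strictly lowers the positive integer-bounded quantity $\int_\beta\omega$, so descending chains of degrees terminate; it therefore suffices to show that each invariant not listed in Lemma~\ref{DPbasecases} is rewritten by the algorithm as a combination of strictly smaller invariants.

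First I would eliminate the inessential insertions. The String and Divisor Equations (Lemma~\ref{genstring}) and the Dilaton Equation (Lemma~\ref{dilaton}), together with their honest (non-symbol) forms, remove every insertion of $1$, $H$, or $E_i$ at the cost of intersection numbers and strictly smaller descendant corrections; since $pt=H\cup H$ and all classes are products of divisors, this leaves invariants whose only essential insertions are point classes. In genus~$0$ and~$1$ the $\psi$-classes can moreover be removed outright by Proposition~\ref{psivanishing}, so Sections~\ref{dpg0rec} and~\ref{dpg1rec} handle only primary invariants. Genus~$0$ is the reconstruction of G\"ottsche--Pandharipande \cite{gotpand} via the WDVV symbol (Theorem~\ref{g0reconstruction}), which lowers the degree and the point count down to $<pt^2>_{0,H}$. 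Genus~$1$ rests on Getzler's relation as in Theorem~\ref{g1reconstruction}, reducing to genus~$1$ one-pointed primary invariants---which lie among the base cases of Lemma~\ref{DPbasecases}---plus already-reconstructed genus~$0$ data.

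The genus~$2$ procedure of Section~\ref{dpg2rec} is the crux, since here a single $\psi$-class cannot be eliminated. I would first run the descendant-reduction chain specialized to $X_r$: Lemma~\ref{243lemma}, Corollary~\ref{psiomega2}, and Lemma~\ref{psiomega} together express every one-$\psi$ invariant as a linear combination of primary invariants via Lemma~\ref{nonprimitivereconstruction}, so that after this step only primary genus~$2$ invariants remain. On these I would apply $L_{2,4}$ (equivalently $L_\mathrm{BP}={\pi_2}_*L_{2,4}$), mirroring the $\mathbb{P}^2$ computation of Belorousski--Pandharipande \cite{belpandg2}: taking its symbol with point-class insertions and expanding by the Splitting Lemma~\ref{splittinglemma} yields the top-complexity primary invariant of degree~$\beta$ as a sum of products of invariants of strictly lower degree, together with genus~$0$ and genus~$1$ factors handled in the previous paragraph.

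The main obstacle is termination together with the solvability of the resulting linear systems. One must check that the splitting of $L_{2,4}$ produces genus~$2$ pieces only in strictly smaller degree (finiteness of effective decompositions of a fixed $\beta$ makes the degree order well-founded) and that all genus~$0$ and~$1$ factors are already reconstructed. One must then verify, exactly as in Lemmas~\ref{primitivereconstruction} and~\ref{nonprimitivereconstruction}, that the finite systems of symbol equations obtained by inserting the various powers of $\omega$ are of full rank, so that the unknown invariant is uniquely solved in terms of strictly smaller ones; this rank check is performed by the accompanying computer program. Finally I would identify the invariants that the recursion cannot reduce---the low-degree genus~$1$ and~$2$ invariants and the $(-1)$-curve contributions of the $E_i$---and confirm that these are precisely the finitely many initial cases of Lemma~\ref{DPbasecases}, completing the induction.
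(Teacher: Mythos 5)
There is a genuine gap, and it sits exactly where the paper itself warns of one. Your genus~$2$ strategy routes everything through the machinery of Theorem~\ref{g2reconstruction} (Lemmas~\ref{243lemma}, \ref{psiomega2}, \ref{psiomega}, \ref{nonprimitivereconstruction}), whose endpoint is \emph{two-pointed} genus~$2$ invariants. But for $X_r$ those are infinitely many unknown quantities: by the dimension formula there is a two-pointed invariant $N^{(2)}_{d,\alpha}=\left<pt^2\right>_{2,dH-\sum\alpha_iE_i}$ whenever $3d-\sum\alpha_i=1$, and none of these appear in Lemma~\ref{DPbasecases}. This is precisely why the paper states, at the start of the section, that it does \emph{not} use Theorem~\ref{g2reconstruction} but only the relation $L_{2,4}$, and why the introduction stresses that ``finding a strategy that calculates invariants with at most 2 points might not be much easier than finding a strategy that calculates all invariants.'' The same objection applies in genus~$0$ and~$1$: Theorems~\ref{g0reconstruction} and \ref{g1reconstruction} reduce to one- and two-pointed invariants, of which $X_r$ has infinitely many (e.g.\ $\left<pt^2\right>_{0,\beta}$ for every $\beta$ with $3d-\sum\alpha_i=3$), so quoting those theorems cannot by itself yield reconstruction from \emph{finitely many} initial values. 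The whole content of Theorem~\ref{p2blowupcomputation} is this stronger finiteness, and it requires recursions that descend in the degree parameters $(d,\alpha)$, not in the number of points.

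The paper's actual argument is structured quite differently. First, a virtual-dimension count shows that every nonzero invariant with $d\neq0$ lies in one of four families $N^{(g)}_{d,\alpha}$, $H^{(2)}_{d,\alpha}$, $P^{(2)}_{d,\alpha}$, $K^{(2)}_{d,\alpha}$ (so no general descendant-elimination step is needed at all). Then, working modulo the relation $\approx$ (lower $d$ or lower genus), explicit evaluations such as $L_{2,4}(H,E_1,E_1,E_1)$, $L_\mathrm{BP}(H,H,H)$, a linear combination of three $L_\mathrm{BP}$ evaluations, and $L_\mathrm{BP}(E_1,E_1,E_1)$ give one recursion per family, solved in the order $H$, then $P$, then $N$, then $K$. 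Note the direction of the coupling: the \emph{primary} invariants $N^{(2)}$ are computed from the \emph{descendant} invariants $H^{(2)}$ and $P^{(2)}$ — the opposite of your plan of reducing descendants to primaries first. Your proposal also never addresses the $K^{(2)}$ family ($\tau_1(E_1)$ insertions), and the full-rank checks you invoke from Lemmas~\ref{primitivereconstruction} and \ref{nonprimitivereconstruction} belong to the general reconstruction theorem, not to the $X_r$ algorithm, whose termination is instead verified directly from the inequalities $3d-\sum\alpha_i+1\geq 2$ (or $\geq 3$) under which each recursion applies, together with the $d=0$ base cases of Lemma~\ref{DPbasecases}.
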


	A cohomology basis for $X_r$ is given by $1,pt,H,E_1,\ldots,E_r$, where $H$ is the hyperplane class and $E_i$ is the class of the exceptional divisor at the $i$-th blown up point.
	We have
	\[H\cdot H = 1,\qquad H\cdot E_i=0,\qquad E_i\cdot E_j=\delta_{i,j},\]
	for any $1\leq i,j\leq r$.

\begin{lemma}
	\label{effectivedp}
	Let $C$ be an effective curve on $X_r$ of class $\beta$.
	Then either $\beta$ is a multiple of $E_i$ for some $1\leq i \leq r$,
	or $\beta=dH-\sum_{i=1}^{r}\alpha_iE_i$ with $0\leq\alpha_i\leq d$.
	In the second case the coefficient $a_i$ is equal to the multiplicity of $\pi(C)$ at $p_i$,
	where $\pi:X_r\rightarrow\mathbb{P}^2$ is the blow-down morphism.
\end{lemma}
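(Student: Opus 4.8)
The plan is to read off the class of $C$ from its image under the blow-down morphism $\pi\colon X_r\to\mathbb{P}^2$, reducing at once to the case that $C$ is irreducible and reduced; a curve whose support is a single exceptional divisor, possibly with multiplicity, accounts for the alternative ``$\beta$ is a multiple of $E_i$''. First I would separate two exhaustive cases according to whether $\pi$ contracts $C$. Since $\pi$ is an isomorphism over $\mathbb{P}^2\setminus\{p_1,\dots,p_r\}$ and collapses each exceptional divisor $E_i\cong\mathbb{P}^1$ to $p_i$, an irreducible curve lying in the exceptional locus must equal some $E_i$, giving $\beta=E_i$. Otherwise $C_0:=\pi(C)$ is an irreducible plane curve of some degree $d\geq1$, and because $C$ is not contained in the exceptional locus $\pi$ restricts to a birational morphism $C\to C_0$, so $C$ is the strict transform of $C_0$.

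The second step computes the class of this strict transform. I would use the total-transform decomposition
\[\pi^*C_0=\widetilde{C_0}+\sum_i m_i E_i,\qquad m_i=\mathrm{mult}_{p_i}(C_0),\]
together with $\pi^*(dH_{\mathbb{P}^2})=dH$, to obtain $\beta=[\widetilde{C_0}]=dH-\sum_i m_i E_i$. This simultaneously identifies each coefficient $\alpha_i$ with the multiplicity $m_i$ of $\pi(C)$ at $p_i$, as asserted, and gives $\alpha_i=m_i\geq0$ immediately.

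It remains to establish the upper bound $\alpha_i\leq d$. I would intersect $C_0$ with a general line $\ell$ through $p_i$: as $C_0$ is irreducible of degree $d$, such an $\ell$ is not a component of $C_0$, so B\'ezout gives $C_0\cdot\ell=d$, while the local intersection number at $p_i$ is at least the multiplicity $m_i$. Since all local intersection numbers are nonnegative, $m_i\leq d$ follows (the case $d=1$ being trivial).

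I do not expect a serious obstacle here, as each step is classical; the two points demanding care are the bookkeeping of the sign conventions chosen for the classes $E_i$ (so that the stated intersection numbers and the formula $\beta=dH-\sum\alpha_iE_i$ remain consistent), and the fact that the dichotomy really uses that $C$ has irreducible support. For a general reducible effective $1$-cycle the inequality $\alpha_i\geq0$ can fail --- for instance the effective class $(H-E_1)+2E_1=H+E_1$ has $\alpha_1=-1$ --- which is precisely why the statement is phrased for a single curve.
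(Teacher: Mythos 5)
Your proof is correct, but it is worth noting that the paper does not actually prove this lemma at all: its entire ``proof'' is the citation \emph{``This is a special case of Lemma~2.3 in \cite{coskuneffective}''}. So your argument is necessarily a different route --- a self-contained one. Your dichotomy (either $\pi$ contracts $C$, forcing $C$ supported on some $E_i$, or $C$ is the strict transform of the irreducible plane curve $C_0=\pi(C)$), the total-transform formula $\pi^*C_0=\widetilde{C_0}+\sum_i m_iE_i$ with $m_i=\mathrm{mult}_{p_i}(C_0)$, and the B\'ezout argument with a general line through $p_i$ giving $m_i\le d$ are all standard and correctly assembled; together they yield exactly the stated class, the bounds $0\le\alpha_i\le d$, and the identification of $\alpha_i$ with the multiplicity. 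What your approach buys is that the lemma becomes verifiable within the paper rather than delegated to the literature, at the cost of a paragraph of classical surface theory. Your closing observation --- that the statement genuinely requires $C$ to have irreducible support, with the explicit counterexample $(H-E_1)+2E_1=H+E_1$ for effective $1$-cycles --- is a valuable clarification of the hypothesis that neither the paper's statement nor its citation makes explicit.
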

\begin{proof}
	This is a special case of Lemma~2.3 in \cite{coskuneffective}.
\end{proof}

The anticanonical divisor of $X_r$ is given by $-K_{X_r}=3d-\sum_{i=1}^r E_i$ so the virtual dimension of $\Mbstack_{g,n}(X_r,\beta)$ is
\[\int_{dH-\sum_{i=1}^{r}\alpha_iE_i} -K_{X_r}+(\text{dim}(X)-3)(1-g)+n=
	3d-\sum_{i=1}^{r}\alpha_i+g-1+n
.\]

Looking at the virtual dimension, we see that all nonzero invariants where $d\neq0$ are of the form \glsuseri{NHPK}
	\begin{align*}
		N^{(g)}_{d,\alpha} & := \left<pt^{3d-\sum \alpha_i + g - 1} \right>_{g,dH-\sum\alpha_i E_i},\\
		H^{(2)}_{d,\alpha} & := \left<\tau_1(H)\cdot pt^{3d-\sum \alpha_i} \right>_{2,dH-\sum\alpha_i E_i},\\
		P^{(2)}_{d,\alpha} & := \left<\tau_1(pt)\cdot pt^{3d-\sum \alpha_i - 1} \right>_{2,dH-\sum\alpha_i E_i},\\
	  K^{(2)}_{d,\alpha} & := \left<\tau_1(E_1)\cdot pt^{3d-\sum \alpha_i} \right>_{2,dH-\sum\alpha_i E_i}.
	\end{align*}
	By the local nature of blowing-up we have
\[N^{(g)}_{d,(\alpha_1,\ldots,\alpha_{r-1},0)}=N^{(g)}_{d,(\alpha_1,\ldots,\alpha_{r-1})}.\]
	A similar equality holds for $H^{(2)}_{d,\alpha}$ and $P^{(2)}_{d,\alpha}$.
	It holds for $K^{(2)}_{d,\alpha}$ only when $r>1$.

\begin{lemma}
	\label{DPbasecases}
	For genus up to 2, the only nontrivial invariants with $d=0$ are
	\[N^{(0)}_{0,E_i}=1, \quad \left<H\right>_{1,0}=-\frac{1}{8}, \quad \left<E_i\right>_{1,0}=-\frac{1}{24},\]\[ \quad H^{(2)}_{0,0}=-\frac{1}{960},\quad K^{(2)}_{0,0}=-\frac{1}{2880}.\]
\end{lemma}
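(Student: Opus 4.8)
The plan is to split the analysis of the degree zero invariants according to Lemma~\ref{effectivedp}: when $d=0$ an effective class is either $\beta=0$ or $\beta=mE_i$ for some $m\geq1$, and these two cases are governed by genuinely different geometry. I would treat the $\beta=mE_i$ case first. The only effective curve in a multiple of $E_i$ is the rigid $(-1)$-curve $E_i\cong\mathbb{P}^1$. For $m=1$ and genus $0$ the space $\Mbstack_{0,0}(X_r,E_i)$ is a single point representing the inclusion $E_i\hookrightarrow X_r$; since $H^1(\mathbb{P}^1,f^*T_{X_r})=H^1(\mathcal{O}(2)\oplus\mathcal{O}(-1))=0$ there is no obstruction, the point is reduced of the expected dimension, and hence $N^{(0)}_{0,E_i}=1$. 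For $m\geq2$, or for positive genus in a class $mE_i$, the source curve must carry a contracted component, so by the Splitting Lemma~\ref{splittinglemma} any such invariant factors as a constant-map contribution times lower invariants; these reduce to the $\beta=0$ data computed below and produce no new base value.

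For $\beta=0$ the moduli space is $\Mbstack_{g,n}(X_r,0)=\Mbstack_{g,n}\times X_r$ with virtual class $e(\mathbb{E}^\vee\boxtimes T_{X_r})\cap[\Mbstack_{g,n}\times X_r]$, where $\mathbb{E}$ is the rank~$g$ Hodge bundle; the classes $\tilde\psi_i$ restrict to $\psi_i$ and $\ev_i$ to the projection onto $X_r$. In genus~$1$ this reproduces exactly the special case of Lemma~\ref{genstring}, namely $\left<\gamma\right>_{1,0}=-\tfrac1{24}\int_{X_r}c_1(X_r)\cup\gamma$, which is nonzero only for a divisor class; evaluating at $\gamma=H$ and $\gamma=E_i$ with $c_1(X_r)=3H-\sum_jE_j$ gives $-\tfrac18$ and $-\tfrac1{24}$. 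A bidegree count then shows that a genus~$2$, degree~$0$ invariant is nonzero only when the insertion has the right codimension: it forces precisely one $\psi$-class together with one divisor insertion, i.e. the invariants $H^{(2)}_{0,0}=\left<\tau_1(H)\right>_{2,0}$ and $K^{(2)}_{0,0}=\left<\tau_1(E_1)\right>_{2,0}$. The primary invariant $\left<pt\right>_{2,0}$ vanishes because $\lambda_2^2=0$, and the would-be $P^{(2)}_{0,0}$ is not a valid invariant (its point-power is negative).

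It remains to compute the genus~$2$ Hodge integrals. Writing $\lambda_1=c_1(\mathbb{E})$, $\lambda_2=c_2(\mathbb{E})$ and expanding the Euler class via the splitting principle, $e(\mathbb{E}^\vee\boxtimes T_{X_r})=\prod_{b=1,2}\bigl(x_b^2-\lambda_1 x_b+\lambda_2\bigr)$ with $x_b$ the Chern roots of $T_{X_r}$, the only term whose product with $\psi_1\cup\ev^*\gamma$ reaches the top bidegree $(4,2)$ on $\Mbstack_{2,1}\times X_r$ is the bidegree $(3,1)$ part $-\lambda_1\lambda_2\,c_1(T_{X_r})$, so that
\[\left<\tau_1(\gamma)\right>_{2,0}=-\Bigl(\int_{\Mbstack_{2,1}}\psi_1\lambda_1\lambda_2\Bigr)\Bigl(\int_{X_r}c_1(X_r)\cup\gamma\Bigr).\]
Using Mumford's relation $\lambda_1^2=2\lambda_2$ and the projection $\pi\colon\Mbstack_{2,1}\to\Mbstack_2$ with $\pi_*\psi_1=\kappa_0=2g-2=2$, together with $\int_{\Mbstack_2}\lambda_1^3=\tfrac1{2880}$, I obtain $\int_{\Mbstack_{2,1}}\psi_1\lambda_1\lambda_2=\tfrac12\cdot2\cdot\tfrac1{2880}=\tfrac1{2880}$; substituting the intersection numbers $\int_{X_r}c_1\cup H=3$ and $\int_{X_r}c_1\cup E_1=1$ yields $H^{(2)}_{0,0}=-\tfrac1{960}$ and $K^{(2)}_{0,0}=-\tfrac1{2880}$.

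I expect the main obstacle to be not these computations but the bookkeeping that justifies the word \emph{only}: one must rule out hidden nonzero contributions from the higher multiples $mE_i$ and from contracted higher-genus components, and check that the Splitting Lemma genuinely reduces every such invariant to the five listed values rather than introducing a new independent base case. The constant-map case is clean once the obstruction bundle $\mathbb{E}^\vee\boxtimes T_{X_r}$ and the vanishing $\lambda_g^2=0$ are in hand, but the excess geometry over the rigid curve $E_i$ is where the classification argument has to be made carefully.
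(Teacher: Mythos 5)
Your computations for the two degree-zero regimes that carry actual numbers agree with the paper's proof and are correct: the paper likewise identifies $\Mbstack_{2,1}(X_r,0)=\Mbstack_{2,1}\times X_r$ with virtual class $c_4(\mathbb{E}^\vee\boxtimes TX_r)$, and its integrand $\psi(\lambda_1^3-3\lambda_1\lambda_2)$ is exactly your $-\psi\lambda_1\lambda_2$ after Mumford's relation $\lambda_1^2=2\lambda_2$; your Hodge integral $\int_{\Mbstack_{2,1}}\psi\lambda_1\lambda_2=\tfrac1{2880}$, the genus-$1$ values via the special case of Lemma~\ref{genstring}, and the unobstructedness computation giving $N^{(0)}_{0,E_i}=1$ (the paper just says ``$E_i$ is a rigid rational curve'') all match.

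The genuine gap is in your treatment of the classes $mE_i$. The claim that ``for $m\geq 2$, or for positive genus in a class $mE_i$, the source curve must carry a contracted component'' is false: for any $m\geq2$ there are smooth degree-$m$ covers $\mathbb{P}^1\to E_i$ (e.g.\ $z\mapsto z^m$), and a smooth genus-$2$ curve admits a hyperelliptic degree-$2$ map to $E_i\cong\mathbb{P}^1$, so the generic stable map in, say, $\Mbstack_{2,0}(X_r,2E_i)$ has no contracted component at all. Moreover, even in the one case where the claim is true ($m=1$, $g>0$), the deduction ``so by the Splitting Lemma any such invariant factors'' is not a valid use of Lemma~\ref{splittinglemma}: that lemma computes pullbacks of classes along the gluing morphisms appearing in pulled-back tautological relations; it does not decompose an invariant merely because its moduli space happens to be supported on a locus of maps with contracted components. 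The argument the paper actually relies on (tersely, ``by looking at the virtual dimension'') is different and does work: for $(g,m)\neq(0,1)$ the virtual dimension $m+g-1+n$ forces every invariant of the types entering the reconstruction ($N$, $H$, $P$, $K$) to carry at least one insertion of $pt$ (or of $H$, or $E_j$); since every stable map in class $mE_i$ has image contained in $E_i$, evaluation factors through $E_i\hookrightarrow X_r$, and $\ev^*(pt)=0$, $\ev^*(H)=0$, $\ev^*(E_j)=0$ for $j\neq i$ because these classes restrict to zero on $E_i$. That is the step you need in place of the contracted-component claim; without it, the word ``only'' in the statement is not established.
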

\begin{proof}
	By looking at the virtual dimension it follows that (up to linear combinations) these are the only possible nonzero invariants.
	The exceptional divisor $E_i$ is itself a rigid curve of genus~$0$.
	The genus~$1$ cases follow from Lemma~\ref{genstring}.

For genus~$2$ we have $\Mbstack_{2,1}(X_r,0)=\Mbstack_{2,1}\times X_r$ and by \RN{6}.6.3 in \cite{maninbookfrobenius}
\begin{equation}
	\label{virtualclasscomputation}
\left[\Mbstack_{2,1}\times X_r\right]^\mathrm{vir}=c_4(\mathbb{E}^\vee\boxtimes TX_r),
\end{equation}
	where $\mathbb{E}$ is the Hodge bundle and $TX_r$ is the tangent bundle of $X_r$.
	This gives
	\[	\int_{\left[\Mbstack_{2,1}\times X_r\right]^\mathrm{vir}}\psi\boxtimes D,
=
\int_{\Mbstack_{2,1}}\psi(\lambda_1^3-3\lambda_1\lambda_2)\int_{X_r}-K_{X_r}D,
\]
which equals $-\frac{1}{960}$ when $D=H$ and $-\frac{1}{2880}$ when $D=E_i$.
\end{proof}
	So our recursive strategy will rely on computing invariants modulo invariants with lower $d$.

	Let $L\in R_{2,n}$ be a relation.
	For fixed $\beta\in H_2(X_r)$, we define
	\[L(\gamma_1,\ldots,\gamma_n):=<\pi_{\{n+1,\ldots,n+m\}}^*(L);\gamma_1,\ldots,\gamma_n,pt,pt,\ldots,pt>_\beta,\]
	where for $m$, which is the number of points to add, we take the unique choice such that the resulting invariants are not all zero for dimensional reasons.

	We write $\alpha+[j]$ for the tuple $\alpha'$ where $\alpha'_i=\alpha_i$ for $i\neq j$ and $\alpha'_j=\alpha_j+1$.

\begin{definition}
We define an equivalence relation $\approx$, where a linear combination of Gromov-Witten invariants is equivalent to zero if it can be expressed in terms of invariants with lower $d$ or lower genus.
\end{definition}

\subsection{Reconstructing genus~$0$ Gromov-Witten invariants}
\label{dpg0rec}
	For the genus $0$ invariants we use the method described in \cite{gotpand}.
	They use the relation $L_{0,4}$ in $R^1(\Mbstack_{0,4})$ to get two recursive formulas:
	For $d-\sum\alpha_i-1 \geq 3$, $L_{0,4}(H,H,pt,pt)$ gives
	\[N^{(0)}_{d,\alpha}\approx 0,\]
	and when $d-\sum\alpha_i-1 \geq 1$, from $L_{0,4}(H,H,E_1,E_1)$ we obtain
	\[\alpha_1d^2 N^{(0)}_{d,\alpha+[1]}+(\alpha_1^2-d^2) N^{(0)}_{d,\alpha}\approx 0,\]
	When $d-\sum\alpha_i-1 = 1$ the invariant $N^{(0)}_{d,\alpha+[1]}$ has zero points.
	So from these two formulas and the base case $N^{(0)}_{1,0}=1$ we can recursively calculate all genus~$0$ invariants.

\subsection{Reconstructing genus~$1$ Gromov-Witten invariants}
\label{dpg1rec}
	For genus~$1$ we use Getzler's relation $L_{1,4}$ from \cite{getzlerg1}.
	For $3d-\sum\alpha_i \geq 2$, $L_{1,4}(H,H,H,H)$ gives
\[N^{(1)}_{d,\alpha}\approx 0,\]
and when $3d-\sum\alpha_i \geq 0$, from $L_{1,4}(E_1,E_1,E_1,E_1)$ we obtain
\[\frac{(\alpha_1+2)(\alpha_1+3)}{3}N^{(1)}_{d,\alpha+2[1]}+\frac{2\alpha_1}{3}N^{(1)}_{d,\alpha+[1]}-N^{(1)}_{d,\alpha}\approx 0.\]
Together with the genus~$0$ invariants this is sufficient to recursively calculate all genus~$1$ invariants.

\subsection{Reconstructing genus~$2$ Gromov-Witten invariants}
\label{dpg2rec}
In \cite{belpandg2} the genus~$2$ Gromov-Witten invariants of $\mathbb{P}^2$ are calculated using the relation $L_\mathrm{BP}$.
So we can assume that $r>0$ and we always have an exeptional divisor $E_1$.

	We calculate the invariants of type $H$ using $L_{2,4}(H,E_1,E_1,E_1)$, which gives
	\[-(\alpha_1+2)H^{(2)}_{d,\alpha+[2]}+H^{(2)}_{d,\alpha+[1]}\approx 0.\]
	for $3d-\sum\alpha_i + 1 \geq 3$.

	For type $P$ we use $L_\mathrm{BP}(H,H,H)$ to obtain
	\[dP^{(2)}_{d,\alpha}-H^{(2)}_{d,\alpha}\approx 0.\]
	for $3d-\sum\alpha_i + 1 \geq 2$.

	For type $N$ we use the linear combination
	\[\frac{3d}{2}L_\mathrm{BP}(E_1,H,H) -L_\mathrm{BP}(E_1,E_1,H) +\frac{d}{2}L_\mathrm{BP}(H,H,H),\]
	which gives the formula
	\[-(\alpha_1+2)dN^{(2)}_{d,\alpha+2[1]}+dN^{(2)}_{d,\alpha+[1]}-dP^{(2)}_{d,\alpha}+H^{(2)}_{d,\alpha}-(\alpha_1+1)H^{(2)}_{d,\alpha+[1]}\approx 0,\]
	for $3d-\sum\alpha_i + 1 \geq 2$.

	Finally for type $K$ we take $L_\mathrm{BP}(E_1,E_1,E_1)$, which gives
	\[-2(\alpha_1+2)N^{(2)}_{d,\alpha+2[1]}+\frac{2-3(\alpha_1+1)}{2}N^{(2)}_{d,\alpha+[1]}+\frac{3\alpha_1}{2}P^{(2)}_{d,\alpha}-\frac{3}{2}K^{(2)}_{d,\alpha}-K^{(2)}_{d,\alpha+[1]}\approx 0.\]
	for $3d-\sum\alpha_i + 1 \geq 2$.

\subsection{Optimization and some numerical results}
	We use similar optimizations as in \cite{gotpand}:
	The Cremona transformation and the fact that when $3d-\sum_{i=1}^{r-1}\alpha_i+1\geq0$, we have
\begin{equation}
	\label{removeonexr}
	N^{(0)}_{d,(\alpha_1,\ldots,\alpha_{r-1},1)}=N^{(0)}_{d,(\alpha_1,\ldots,\alpha_{r-1})}.
\end{equation}
	I do not know a proof for this when $g>0$ and $r>8$.
	However for all the numbers we calculated it does hold.
	The same also holds for $H^{(2)}_{d,\alpha}$ and $P^{(2)}_{d,\alpha}$.
	It holds up in our calculations for $K^{(2)}_{d,\alpha}$ when $r>1$.

	Assuming this property, blown up points for which $\alpha_i=0,1$ do not contribute, so the easiest interesting invariants to compute are the ones for which $\alpha=2^r=(2,\ldots,2)$.
We conclude by explicitly listing some such invariants and some descendant invariants:
{\small
	\begin{align*}
	& K^{(2)}_{3,1} = \;-\frac{1}{12}, 	&&N^{(2)}_{6,2^8} = \;1, 	&& N^{(1)}_{8,2^{11}} = \;24949650, \\
	& P^{(2)}_{4,2} = \;-\frac{2}{3}, 	&&N^{(2)}_{6,2^9} = \;0, 	&& N^{(1)}_{8,2^{12}} = \;\frac{10527885}{2}, \\
	& H^{(2)}_{4,2} = \;-\frac{5}{3}, 	&& N^{(2)}_{7,2^{10}} = \;3113, && N^{(1)}_{9,2^{12}} = \;58460483880,\\
	& H^{(2)}_{4,2^2} = \;-\frac{1}{3}, 	&& N^{(2)}_{7,2^{11}} = \;313, && N^{(1)}_{9,2^{13}} = \;14967968670,\\
	& H^{(2)}_{5,3} = \;72, && N^{(2)}_{8,2^{11}} = \;25721212, && N^{(1)}_{10,2^{15}} = \;12335169291480,\\
	& H^{(2)}_{6,2^4} = \;157689, && N^{(2)}_{8,2^{12}} = \;4604976, && N^{(1)}_{11,2^{16}} = \;44275609195448900.
	\end{align*}
}
We have also checked the numbers listed in \cite{vakilcountingcurves}, \cite{shovalshustin}, and \cite{brugalle}.
Our numbers agree except for the invariant $N^{(1)}_{5,2^2}$ which is known to be a mistake in \cite{vakilcountingcurves} (see \cite[Chapter 8]{chaudhuridas}, which gives the correct number).

Since the primary Gromov-Witten invariants are only enumerative when $r\leq 8$, they can be rational or negative when $r>8$.
Hence we have a non-integer invariant $N^{(1)}_{8,2^{12}}$.
It is interesting that other similar invariants we found are in fact all integers.

\printbibliography
\end{document}